\renewcommand{\b}{{\bf b}}
\renewcommand{\d}{{\bf d}}
\newcommand{\h}{{\bf h}}
 \renewcommand{\v}{{\bf v}}
  \newcommand{\n}{{\bf n}}
  \newcommand{\m}{{\bf m}}
  \newcommand{\e}{\bm{\epsilon}}
  \newcommand{\LE}{\mathcal{LE}}
   \renewcommand{\H}{\mathcal{H}}
  \newcommand{\B}{\mathcal{B}}
\newcommand{\A}{\mathcal{A}}
 \newcommand{\cB}{\mathcal{B}}
 \newcommand{\cA}{\mathcal{A}}
 \newcommand{\cC}{\mathcal{C}}
\newcommand{\cD}{\mathcal{D}}
\renewcommand{\P}{\mathcal{A}}
\newcommand{\G}{\mathcal{G}}
\newcommand{\cG}{\mathcal{G}}
\newcommand{\X}{\mathcal{X}}
\newcommand{\F}{\mathcal{F}}
\newcommand{\cF}{\mathcal{F}}
\newcommand{\cI}{\mathcal{I}}
\newcommand{\CZ}{\mathcal{Z}}
\newcommand{\cZ}{\mathcal{Z}}
\newcommand{\gG}{\Gamma}
\newcommand{\C}{\mathbb{C}}
\newcommand{\Q}{\mathcal{B}}
\newcommand{\R}{\mathbb{R}}
\newcommand{\E}{\mathbb{E}}
\newcommand{\N}{\mathbb{N}}
\newcommand{\cN}{\mathcal{N}}
\newcommand{\Z}{\mathbb{Z}}
\renewcommand{\=}{\mathrel{\mathop:}=}
\DeclareMathOperator{\vdc}{-vdC}
\newcommand{\norm}[1]{\left\Vert #1\right\Vert}
\newcommand{\nnorm}[1]{\lvert\!|\!| #1|\!|\!\rvert}
\theoremstyle{plain}
\newtheorem{theorem}{Theorem}[section]
\newtheorem{lemma}[theorem]{Lemma}
\newtheorem{proposition}[theorem]{Proposition}
\newtheorem*{claim*}{Claim}
\newtheorem*{lemma*}{Lemma}
\newtheorem*{theoremA'}{Theorem A'}
\newtheorem*{theoremB'}{Theorem B'}
\newtheorem*{theoremC'}{Theorem C'}
\newtheorem{problem}{Problem}
\newtheorem*{theorem*}{Theorem}
\newtheorem{corollary}[theorem]{Corollary}
\theoremstyle{definition}
\newtheorem{definition}[theorem]{Definition}
\newtheorem*{example*}{Example}
\newtheorem*{definition*}{Definition}
\theoremstyle{remark}
\newtheorem*{remark}{Remark}
\begin{document}

\title{A multidimensional Szemer\'edi theorem  for Hardy sequences of
different growth }
\author{Nikos Frantzikinakis}
\address[Nikos Frantzikinakis]{University of Crete, Department of mathematics, Knossos Avenue, Heraklion 71409, Greece} \email{frantzikinakis@gmail.com}

\begin{abstract}
We prove a variant of the multidimensional polynomial Szemer\'edi
theorem of Bergelson and Leibman where one replaces polynomial
sequences with other sparse sequences defined by functions that
belong to some Hardy field and satisfy certain growth conditions. We
do this by studying the limiting behavior of the corresponding
multiple ergodic averages and obtaining a simple limit formula. A
consequence of this formula in topological dynamics shows denseness
of  certain orbits when the iterates are restricted to suitably
chosen sparse subsequences. Another consequence is that every
syndetic set of integers contains certain non-shift invariant
patterns, and   every finite coloring of $\N$, with each color class
a syndetic set, contains certain polychromatic patterns, results
very particular to our non-polynomial setup.

\end{abstract}

\thanks{The author was partially supported by  Marie Curie IRG  248008.}

\subjclass[2000]{Primary: 37A45;  Secondary:  28D05, 05D10, 11B25}

\keywords{Ergodic theory, recurrence,   Hardy field, Ramsey theory,
nilmanifolds.}

\maketitle

\section{Introduction}
In \cite{Fu77},  Furstenberg  gave an ergodic theoretic proof of
Szemer\'edi's theorem on arithmetic progressions, and  using similar
methods, Furstenberg and Katznelson \cite{FuK79} proved a
multidimensional extension of Szemer\'edi's theorem. Later on,
Bergelson and Leibman \cite{BL96} gave a polynomial extension of
this result, a special case of which states  that given any
collection of polynomials $p_1,\ldots, p_\ell\colon \N\to\Z$, with
zero constant term, and vectors $\v_1,\ldots, \v_\ell \in \Z^d$,
every subset of $\Z^d$  of positive upper  density contains
configurations of the form
\begin{equation}\label{E:polynomial1}
\{\v, \ \v+p_1(n)\v_1, \ \ldots\ ,\v+p_\ell(n)\v_\ell\}
\end{equation}
for some $\v\in \Z^d$ and $n\in \N$. In the course of proving this
result they introduced and studied the limiting behavior in
$L^2(\mu)$ of the following multiple ergodic averages
\begin{equation}\label{E:polynomial2}
\frac{1}{N}\sum_{n=1}^N f_1(T_1^{p_1(n)}x)\ \! \cdots\ \!
f_\ell(T_\ell^{p_\ell(n)}x),
\end{equation}
 where $T_1,\ldots,
T_\ell\colon X\to X$ are invertible commuting measure preserving
transformations acting on some  probability space $(X,\X,\mu)$ and
$f_1,\ldots, f_\ell\in L^\infty(\mu)$. Their goal was to prove a
multiple recurrence property, namely, that for every $A\in \X$ with
$\mu(A)>0$ one has
\begin{equation}\label{E:polynomial3}
\liminf_{N\to\infty} \frac{1}{N}\sum_{n=1}^N \mu(A\cap
T_1^{-p_1(n)}A\cap \cdots \cap T_\ell^{-p_\ell(n)}A)>0.
\end{equation}
From this, the combinatorial result follows via the correspondence
principle of Furstenberg \cite{Fu77, Fu81a}. Bergelson and Leibman
managed to prove this multiple recurrence property without getting
very precise information about the limit of the averages
\eqref{E:polynomial2}. Important role in their proof played an
ergodic structure theorem (already present in \cite{FuK79}) and the
coloristic counterpart of their density result, now known as
polynomial van der Waerden theorem, which they proved using more
elementary methods.\footnote{When $T_1=\cdots=T_\ell$,  using deep
results  from
 \cite{HK05a, HK05b, Lei05a, Zi07}, property
\eqref{E:polynomial3} was proved in \cite{BLL08} without appealing
to the polynomial van der Waerden theorem. No such proof   for
general commuting transformations is   known.} The reader can find
several other examples were ergodic methods were used to prove
combinatorial results in the surveys \cite{Be06a, Be06b, Kra06b,
Kra11}.


In the present article, we  establish a variant of the
 polynomial Szemer\'edi theorem  where  one
replaces the polynomials $p_1,\ldots, p_\ell$ with a collection of
sparse sequences of integers defined using functions that belong to
some Hardy field and satisfy certain growth conditions. For
instance, we show that one can substitute the configurations
\eqref{E:polynomial1} with configurations of the form
$$
\{\v, \ \v+[n^{c_1}]\v_1,  \ \ldots \ ,\v+[n^{c_\ell}]\v_\ell\}
$$
for every choice of  distinct positive   non-integers $c_1,\ldots,
c_\ell$. Despite the similarity of this result with the polynomial
Szemer\'edi theorem,  its proof is very different. This is mainly
because we are unable to prove the corresponding coloristic result
in a simple way (the only proof we know uses the density result). To
circumvent this problem, we deviate from the classical methods used
in \cite{BL96,  FuK79}, and aim at proving the needed multiple
recurrence property by obtaining a complete understanding of
 the limiting behavior  of the corresponding multiple ergodic
 averages.
  In our particular setup, we   establish the following
 explicit limit formula
\begin{equation}\label{E:fracpower1}
\lim_{N\to\infty}\frac{1}{N}\sum_{n=1}^N f_1(T_1^{[n^{c_1}]}x)\ \!
\cdots\ \! f_\ell(T_\ell^{[n^{c_\ell}]}x)= \tilde{f}_1(x) \cdots
\tilde{f}_\ell(x),
\end{equation}
where $c_1,\ldots, c_\ell$ are distinct positive  non-integers, the
convergence takes place in $L^2(\mu)$, and $\tilde{f}_i$ is the
orthogonal projection of the function $f_i$ on the subspace of
functions that are left invariant by the transformation $T_i$. The
proof of identity \eqref{E:fracpower1} relies on ergodic
decomposition results, seminorm  estimates, and equidistribution
results  on nilmanifolds.

 Because of  the  explicit
evaluation  of the limit in  \eqref{E:fracpower1}, it is a simple
matter to prove a multiple recurrence property  analogous to
\eqref{E:polynomial3}, with an explicit lower bound, namely,
\begin{equation}\label{E:fracpower2}
\lim_{N\to\infty} \frac{1}{N}\sum_{n=1}^N \mu(A\cap
T_1^{-[n^{c_1}]}A\cap \cdots \cap T_\ell^{-[n^{c_\ell}]}A)\geq
(\mu(A))^{\ell+1}
\end{equation}
where, as usual,   $c_1,\ldots, c_\ell$ are distinct positive
non-integers.

 We remark that    identity \eqref{E:fracpower1} and  estimate
\eqref{E:fracpower2} fail if one of the numbers $c_1,\ldots, c_\ell$
is an integer greater than $1$. This is a known feature of
polynomial sequences  caused by their lack of equidistribution in
congruence classes.
In this
respect, fractional powers, as well as  other sequences that we
consider next, are better suited for the problems we are interested
in.

The method of proof of \eqref{E:fracpower1} 
allows us to work in a much more general setup. We prove that the
place of the sequences $[n^{c_1}],\ldots, [n^{c_\ell}]$ can take any
collection of sequences $[a_1(n)],\ldots, [a_\ell(n)]$, where the
functions $a_1(t), \ldots, a_\ell(t)$ belong to some Hardy field,
have different growth rates, and, roughly speaking, grow like a
fractional power of $t$ (for the exact statements see
Theorems~\ref{T:MainConv} and \ref{T:MainRec}). For instance,
we can
 use the
following collection of sequences
$$
\big\{[n^{c}(\log{n})^{d_1}],  \ \ldots  \
,[n^{c}(\log{n})^{d_\ell}]\big\}
$$
where  $c$ is  a positive non-integer and  $d_1,\ldots, d_\ell\in
\R$ are distinct, or more exotic collections like
$$
\Big\{[\sqrt[3]{n}],\ [n\sqrt{n^3+1}],\ [n^{3/2}
e^{\sqrt{\log\log{n}}}],\ [n^{\pi}/\log{n}],\  \Big[\sqrt{n}
\int_0^n e^{\sqrt{\log{t}}} \ dt\Big]\Big\}.
$$

Another interesting consequence of the limit formula
\eqref{E:fracpower1} is in topological dynamics. It  enables us to
show, for instance, that if $T,S$ are commuting minimal
transformations acting on a compact metric space $(X,d)$, and $a,b$
are distinct positive  non-integers, then for a residual set of
$x\in X$ one has
$$
\overline{\big((T^{[n^a]}x, S^{ [n^b]}x)\big)_{n\in\N}}=X\times
  X.
  $$
Periodic systems show that this fails if either $a$ or $b$ is an
integer greater than $1$.

 The limit formula
\eqref{E:fracpower1} also has  some rather unusual consequences in
combinatorics. It implies that if $E\subset \N$ is  syndetic (i.e.
finitely many translates of $E$  cover $\N$), then it contains
certain non-shift invariant patterns, for instance, we prove that
for  $a,b$ as before,  the system
\begin{align*}
 2y-x=&\ [n^a]\\
 3z-x=&\ [n^b]
\end{align*}
has a solution with $x,y,z\in E$ and $n\in\N$. It also implies that
for every finite coloring of $\N$, where each color class is a
syndetic set, the system
\begin{align*}
 y-x=&\ [n^a]\\
 z-x=&\ [n^b]
\end{align*}
has a solution with $x,y,z$ having arbitrary colors. Again, these
results are very particular to our non-polynomial setup and fail  if
either $a$ or $b$ is an integer greater than $1$.

 In the next
section we give a  precise formulation of our main results.
\section{Main results}

\subsection{Our setup}
In order to properly state our results  we have to first
 introduce some notation.

A \emph{system} $(X,\X,\mu, T_1,\ldots, T_\ell)$ is a Lebesgue
probability space $(X,\X,\mu)$ together with  a collection of
commuting invertible measure preserving transformations $T_1,\ldots,
T_\ell\colon X\to X$. By  $\E(f|\cI_{T_i})$ we denote the
conditional expectation on the $\sigma$-algebra $\cI_{T_i}$ of
$T_i$-invariant sets. Equivalently, this is the orthogonal
projection on the closed subspace of $T_i$-invariant functions.

Throughout the article we use the symbol $\mathcal{H}$ to denote a
translation invariant Hardy field (all notions defined in
Section~\ref{SS:Hardy}). All iterates of the transformations
involved in our statements are  defined using functions that belong
to the same  Hardy field. This particular setup
  enables us to work within a rich class of functions and offers
several aesthetic and  technical advantages.

In most instances, we  restrict our attention to the following
``good'' class of functions:
\begin{definition}\label{D:good}
We denote by $\G$ the collection  of all functions $a\colon
[c,\infty)\to \R$ that satisfy the growth conditions $|a(t)|/(t^d
\log{t})\to \infty$ and $|a(t)|/t^{d+1}\to 0$ as $t\to\infty$ for
some integer $d\geq 0$.
\end{definition}
The presence of the logarithm on the first condition is purely for
technical reasons, it ensures that successive differences of
functions in $\cG\cap \H$ either converge to $0$ or else are
functions with substantial growth (this follows from
Lemma~\ref{L:properties}). The key features of functions in $\cG$
are: $(i)$ they do not grow very fast, and $(ii)$ they ``stay away''
from all polynomials in a rather strong sense. Staying away from
polynomials is a  property that we desire since the conclusions of
our main results    fail for some
 polynomials with integer coefficients.

\subsection{Results in ergodic theory}
For the sake of brevity  we define:
\begin{definition}
   The functions
$a_1,\ldots,a_\ell\colon [c,\infty)\to \R$  are said to have
\emph{different growth rates} if their pairwise  quotients converge
to $\pm \infty$ or to $0$.
\end{definition}
\subsubsection{The limit formula}
The main result of this article is the following limit formula (a
special case of this was stated as Problem 6 in \cite{Fr10} and as
Problem 29 in \cite{Fr11}):
\begin{theorem}\label{T:MainConv}
Let $\H$ be a Hardy field and  $a_1,\ldots,a_\ell \in \G\cap \H$ be
functions with different growth rates.
Then  for every   system $(X,\mathcal{X},\mu,T_1,\ldots, T_\ell)$
 and functions
   $f_1,\dots,f_\ell\in L^\infty(\mu)$ we have
   \begin{equation}\label{E:ProductForm}
\lim_{N\to\infty}    \frac1N \sum_{n=1}^N T_1^{[a_1(n)]}f_1 \ \!
\cdots \ \!
    T_\ell^{ [a_\ell(n)]}f_\ell= \tilde{f}_1 \cdots
    \tilde{f}_\ell
  \end{equation}
  where $\tilde{f}_i\=\E(f_i|\cI_{T_i})=\lim_{N\to\infty}\frac{1}{N}\sum_{n=1}^NT_i^nf_i$ and the convergence takes
place in $L^2(\mu)$.
\end{theorem}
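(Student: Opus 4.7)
The plan is to use multilinearity to reduce to a vanishing statement, and then establish the vanishing via a seminorm estimate obtained from a PET-type scheme combined with equidistribution results on nilmanifolds. By multilinearity, decompose each $f_i = \tilde f_i + g_i$ where $g_i \= f_i - \tilde f_i$ satisfies $\E(g_i|\cI_{T_i})=0$, and expand the product inside the average in \eqref{E:ProductForm}. Since $T_i^{[a_i(n)]}\tilde f_i = \tilde f_i$ for every $n$, the term with all factors equal to $\tilde f_i$ contributes exactly $\tilde f_1\cdots\tilde f_\ell$, while every other term contains at least one factor of the form $g_i$ with $\E(g_i|\cI_{T_i})=0$. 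It therefore suffices to prove that whenever $\E(f_{i_0}|\cI_{T_{i_0}})=0$ for some index $i_0$, the multiple average on the left-hand side of \eqref{E:ProductForm} converges to $0$ in $L^2(\mu)$.

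\textbf{Seminorm estimate via PET.} To establish the vanishing I would invoke a PET-style scheme adapted to Hardy iterates. Repeated applications of the van der Corput inequality produce shorter multiple averages whose iterates are differences of the form $a_i(n+h)-a_i(n)$; the condition $a_i \in \G$ ensures that such differences either converge to $0$ or continue to stay away from polynomials with substantial growth (the role of the factor $\log t$ in Definition~\ref{D:good}, cf.\ Lemma~\ref{L:properties}), and the different growth rates hypothesis lets one eliminate a leading factor at each step. After finitely many iterations one arrives at an estimate of the form
\[
\limsup_{N\to\infty}\left\|\frac{1}{N}\sum_{n=1}^N \prod_{i=1}^\ell T_i^{[a_i(n)]}f_i\right\|_{L^2(\mu)} \leq C\, \nnorm{f_{i_0}}_{s,T_{i_0}},
\]
where $\nnorm{\cdot}_{s,T_{i_0}}$ is a Host--Kra type seminorm of finite order $s$ depending on the combined Hardy complexity of the $a_i$.

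\textbf{From seminorm estimate to nilmanifolds.} By the Host--Kra structure theorem applied to the single transformation $T_{i_0}$, the seminorm $\nnorm{\cdot}_{s,T_{i_0}}$ is dual to the characteristic factor built from inverse limits of $s$-step nilsystems, so $f_{i_0}$ may be replaced by its projection on that factor. Combining this with an ergodic decomposition of $\mu$ with respect to $T_{i_0}$ (and, iteratively, with respect to the other $T_i$ when necessary), the problem reduces to controlling the orbits of $[a_i(n)]$ on nilmanifolds. Known equidistribution results for Hardy sequences on nilmanifolds, valid precisely because $a_i\in \G\cap\H$ stays uniformly away from polynomials, then show that the nil-projection of $f_{i_0}$ integrates to zero against the remaining factors, while the assumption $\E(f_{i_0}|\cI_{T_{i_0}})=0$ kills the constant (zero-frequency) component; this gives the desired vanishing.

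\textbf{Main obstacle.} The hardest step is the PET/seminorm reduction in the Hardy-field setting. In the polynomial case of \cite{BL96}, Weyl differencing closes on the ring of polynomials and admits a clean complexity ordering, whereas for Hardy iterates the class of differences is not predetermined, and one must use structural properties of $\G\cap\H$ to guarantee that differencing keeps the functions inside a controllable class and that the induction strictly decreases complexity. The growth gap $|a(t)| \gg t^d \log t$ in Definition~\ref{D:good}, together with the different growth rates hypothesis, is exactly what is needed to prevent resonances between the various iterates and to ensure that the scheme terminates with a meaningful seminorm bound on $f_{i_0}$.
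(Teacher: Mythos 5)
The multilinear reduction at the beginning is a reasonable way to set up the problem, and the overall shape of your plan (seminorm estimate via van der Corput, then nilsequence decomposition, then equidistribution) matches the paper's strategy. However, there is a serious gap in the seminorm step, and two other points that need attention.

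\textbf{The PET scheme does not control an arbitrary $f_{i_0}$.} Your claimed estimate $\limsup_N \|\cdot\|_{L^2} \leq C\,\nnorm{f_{i_0}}_{s,T_{i_0}}$ for an arbitrary index $i_0$ is precisely what the paper cannot obtain directly from repeated van der Corput differencing. For commuting transformations with iterates of different growth, van der Corput differencing keeps producing families whose ``leading'' term is controlled by the fastest growing iterate; the induction on type in Proposition~\ref{P:CharB} therefore only yields seminorm control on $f_1$, the function attached to $a_1$ where $a_1 \succ a_2 \succ \cdots$ (this is explicit in Propositions~\ref{P:CharB2special} and \ref{P:CharBspecial}). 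Getting analogous control on the functions attached to slower iterates is a genuinely separate step: the paper first uses the weak decomposition of Proposition~\ref{P:ApprDual} to replace $f_1$ by a linear combination of dual functions, turns the corresponding factor into a dual sequence $\cD_x([a_1(n)])$, and then proves the correlation estimate of Proposition~\ref{P:BASIC} to absorb that weight; this, combined with a second induction on the number of transformations (Propositions~\ref{prop:weighted} and \ref{pr:charcnilseq}), is what produces seminorm estimates for \emph{all} the functions. Your sketch presents the PET step as if it directly closes on any $f_{i_0}$, so the core difficulty of the proof is bypassed rather than addressed.

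\textbf{The reduction to $\E(f_{i_0}|\cI_{T_{i_0}})=0$ does not mesh cleanly with the seminorm bound.} Even granting a bound by $\nnorm{f_{i_0}}_{s,T_{i_0}}$, the condition $\E(f_{i_0}|\cI_{T_{i_0}})=0$ is only $\nnorm{f_{i_0}}_{1,T_{i_0}}=0$, which does not make the order-$s$ seminorm vanish for $s>1$. So you cannot simply feed the multilinear reduction into the seminorm bound to get vanishing. You do gesture at the correct fix (project onto the Host--Kra nilfactor, then use equidistribution), but then the step ``the nil-projection of $f_{i_0}$ integrates to zero against the remaining factors'' is problematic when only $f_{i_0}$ has been nil-ified: the remaining factors $T_i^{[a_i(n)]}f_i$ for $i \neq i_0$ are just bounded measurable functions, and the equidistribution result of Proposition~\ref{P:NilEqui} only applies when all factors come from nilmanifolds. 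In the paper, seminorm control on \emph{all} functions (Proposition~\ref{pr:charcnilseq'}) plus the strong decomposition (Theorem~\ref{P:ApprNil}) allow every $f_i$ to be replaced simultaneously by a nilsequence, after which identity~\eqref{poi} produces the product formula, and the $\E(\cdot|\cI_{T_i})$ appear as ergodic averages of the nilsequence pieces. This is a structurally different, and necessary, route.

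\textbf{Sub-linear iterates need a separate argument.} Proposition~\ref{P:NilEqui} requires the $a_i$ to have positive degree, so your appeal to equidistribution is not available when some $a_i$ has degree $0$. The paper handles this by the averaging trick of Section~\ref{SS:proof}: one groups $n$ into blocks $Rn+r$, observes that for a density-one set of $n$ the slow iterates $[a_i(Rn+r)]$ are constant over $r=1,\ldots,R$, and then appeals to the variant Propositions~\ref{T:Rn+r'} and \ref{TT:Rn+r'}. Your proposal does not address this case at all, and it would fail without some such device.
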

The case $\ell=1$ follows from the equidistribution results in
\cite{Bos94} and the case where all the iterates have sub-linear
growth follows form \cite{Fr10} (this case is simple and no
commutativity of the transformations is needed). When all the
transformations are equal a slightly weaker result is proved in
\cite{Fr10}.\footnote{Even in the case where all the transformations
are equal, our present argument   has a technical advantage over the
argument used in \cite{Fr10}. This enables us to relax the growth
condition used there.}
 Easy examples of
rational rotations on the circle show that for $\ell\geq 2$ the
limit formula \eqref{E:ProductForm} fails when  the iterates are
given by polynomial sequences, even if these polynomials have
distinct degrees. In fact,  it fails if some non-trivial linear
combination of the functions $a_1,\ldots, a_\ell$ is a polynomial
different than $ \pm t+c$.  When the assumption that the
transformations commute is removed, and two or more iterates have
super-linear growth, examples from \cite{FrLW11} show that the limit
in \eqref{E:ProductForm} does not in general exist. Lastly, we
remark that in \eqref{E:ProductForm} the limit
 $\lim_{N\to\infty}\frac{1}{N} \sum_{n=1}^N$ cannot be
replaced by the  uniform limit $\lim_{N-M\to\infty}\frac{1}{N-M}
\sum_{n=M}^N$. This is  because for  $a\in \H\cap \G$
 one can show that  the sequence $([a(n)])$ takes odd
 (respectively even) values in arbitrarily long intervals of integers.

\subsubsection{Multiple recurrence}
 Using Theorem~\ref{T:MainConv} we easily
deduce the following:
\begin{theorem}\label{T:MainRec}
Under the assumptions of Theorem~\ref{T:MainConv}, if $A_0, A_1,
\ldots, A_\ell\in \X$ satisfy
$$
 \mu(A_0\cap
T_1^{k_1}A_1\cap \cdots \cap T_\ell^{k_\ell}A_\ell)=\alpha>0
$$ for some
$k_1,\ldots, k_\ell\in \Z$, then
\begin{equation}\label{E:cvb}
\lim_{N\to\infty} \frac{1}{N}\sum_{n=1}^N\mu(A_0\cap
T_1^{-[a_1(n)]}A_1 \cap\cdots \cap T_\ell^{-[a_\ell(n)]}A_\ell)\geq
\alpha^{\ell +1}.
\end{equation}
\end{theorem}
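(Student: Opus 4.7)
The plan is to combine Theorem~\ref{T:MainConv}---which identifies the limit as an explicit product of conditional expectations---with a H\"older-type inequality to obtain the lower bound $\alpha^{\ell+1}$. Setting $f_i:=\mathbf{1}_{A_i}$ for $0\le i\le\ell$ and applying Theorem~\ref{T:MainConv} to $f_1,\dots,f_\ell$, then pairing with $f_0$ in $L^2(\mu)$, the identity
\[
\mu(A_0\cap T_1^{-[a_1(n)]}A_1\cap\cdots\cap T_\ell^{-[a_\ell(n)]}A_\ell)=\int f_0\cdot T_1^{[a_1(n)]}f_1\cdots T_\ell^{[a_\ell(n)]}f_\ell\,d\mu
\]
shows that the left hand side of \eqref{E:cvb} equals $\int f_0\prod_{i=1}^\ell \tilde{f}_i\,d\mu$, where $\tilde{f}_i:=\E(f_i\mid\cI_{T_i})$.

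Next, I would reduce to the case $k_1=\cdots=k_\ell=0$ by setting $B_i:=T_i^{k_i}A_i$ for $1\le i\le\ell$: the hypothesis becomes $\mu(A_0\cap B_1\cap\cdots\cap B_\ell)=\alpha$, and $\tilde{\mathbf{1}}_{B_i}=\tilde{\mathbf{1}}_{A_i}$ since $\mathbf{1}_{B_i}=T_i^{-k_i}\mathbf{1}_{A_i}$ and $T_i^{-k_i}$ commutes with $\E(\cdot\mid\cI_{T_i})$, whose image is $T_i$-invariant; hence $\int f_0\prod\tilde{f}_i\,d\mu$ is unchanged by the replacement.

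It then remains to prove
\[
\int f_0\prod_{i=1}^\ell \tilde{f}_i\,d\mu\ \ge\ \Big(\int f_0\prod_{i=1}^\ell f_i\,d\mu\Big)^{\ell+1}
\]
for $f_i\in[0,1]$ in a system of commuting measure-preserving transformations. I would attack this by H\"older's inequality with exponent $\ell+1$ (exploiting $f_i^{\ell+1}\le f_i$) combined with the $\Z^\ell$-ergodic decomposition $\mu=\int \mu_\omega\,dP(\omega)$: on each ergodic component the conditional H\"older bound together with Jensen for $t\mapsto t^{\ell+1}$ gives $\int f_0\prod\tilde{f}_i\,d\mu_\omega\ge\alpha_\omega^{\ell+1}$, and a further application of Jensen together with $\alpha=\int\alpha_\omega\,dP(\omega)$ assembles the global bound. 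The main obstacle is preserving the sharp exponent $\ell+1$: iterating the $\ell=1$ Cauchy--Schwarz estimate $\alpha^2\le\int f_0\tilde{f}_1 f_2\cdots f_\ell\,d\mu$ one factor at a time yields only $\alpha^{2^\ell}$, which is strictly worse than $\alpha^{\ell+1}$ for $\ell\ge 2$, so recovering the correct exponent forces a single integrated H\"older-plus-Jensen argument that simultaneously incorporates all $\ell$ commuting invariant $\sigma$-algebras via the ergodic decomposition rather than handling them sequentially.
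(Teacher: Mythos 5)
Your first two steps are sound: applying Theorem~\ref{T:MainConv} to identify the limit as $\int \mathbf{1}_{A_0}\cdot\prod_{i=1}^\ell\E(\mathbf{1}_{A_i}\mid\cI_{T_i})\,d\mu$, and the reduction to $k_1=\cdots=k_\ell=0$ via the observation $\E(\mathbf{1}_{T_i^{k_i}A_i}\mid\cI_{T_i})=\E(\mathbf{1}_{A_i}\mid\cI_{T_i})$, are both correct and are morally what the paper does (the paper folds this into a monotonicity remark rather than an explicit change of sets).

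The gap is in your third step. The paper does \emph{not} try to prove the multi-function inequality
$\int f_0\prod_{i=1}^\ell\E(f_i\mid\cI_{T_i})\,d\mu\ge\bigl(\int f_0\prod f_i\,d\mu\bigr)^{\ell+1}$
directly. Instead it first reduces to a single-function statement: set $f:=\mathbf{1}_{A_0\cap T_1^{k_1}A_1\cap\cdots\cap T_\ell^{k_\ell}A_\ell}$; then $f\le\mathbf{1}_{A_0}$ and, since $f\le T_i^{k_i}\mathbf{1}_{A_i}$, the $T_i$-invariance of $\E(\cdot\mid\cI_{T_i})$ gives $\E(f\mid\cI_{T_i})\le\E(\mathbf{1}_{A_i}\mid\cI_{T_i})$. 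Hence the quantity to bound is $\ge\int f\cdot\prod_{i=1}^\ell\E(f\mid\cI_{T_i})\,d\mu$, and the inequality $\int f\prod_{i=1}^\ell\E(f\mid\cI_{T_i})\,d\mu\ge(\int f\,d\mu)^{\ell+1}$ is exactly Lemma~1.6 of \cite{Chu11}, which the paper cites. You instead propose to prove the multi-function inequality from scratch via ``H\"older plus the $\Z^\ell$-ergodic decomposition plus Jensen,'' but as stated this does not close. The decomposition $\mu=\int\mu_\omega\,dP(\omega)$ only makes the \emph{joint} $\Z^\ell$-action ergodic on each fiber; the individual $T_i$ remain non-ergodic there, so $\E(f_i\mid\cI_{T_i})$ restricted to $\mu_\omega$ is still a genuine (non-constant) function and the fiberwise claim $\int f_0\prod\tilde f_i\,d\mu_\omega\ge\alpha_\omega^{\,\ell+1}$ is exactly the same inequality you started with, now for $\mu_\omega$. ``Conditional H\"older with exponent $\ell+1$'' applied relative to a single $\sigma$-algebra does not simultaneously handle $\ell$ different invariant $\sigma$-algebras, and you correctly note that iterated Cauchy--Schwarz degrades the exponent to $2^\ell$; but you have not supplied the mechanism that recovers $\ell+1$. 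That mechanism is precisely the content of Chu's Lemma~1.6, which is a nontrivial result in its own right and needs either to be cited or reproved; your sketch does neither.
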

\begin{proof}
By Theorem~\ref{T:MainConv}  it suffices to show that
$$
\int {\bf 1}_{A_0} \cdot \E({\bf 1}_{A_1}|\cI_{T_1})\cdots \E({\bf
1}_{A_\ell}|\cI_{T_\ell})\ d\mu\geq a^{\ell+1}.
$$
 Since each  function $\E({\bf 1}_{A_i}|\cI_{T_i})$ is
 $T_i$-invariant,    the left hand side is greater than
 $$
\int f\cdot \E(f|\cI_{T_1})\cdots \E(f|\cI_{T_\ell})\ d\mu\geq
\Big(\int f \ d\mu\Big)^{\ell+1}=a^{\ell+1},
$$
where
  $f={\bf 1}_{A_0\cap
T_1^{k_1}A_1\cap \cdots \cap T_\ell^{k_\ell}A_\ell}$ and the last
estimate follows from  Lemma 1.6 in \cite{Chu11}.
\end{proof}
Hence, the limit in \eqref{E:cvb}  is positive if $\mu(A_0)>0$ and
$\mu(\bigcup_{k\in \Z}T_i^{k}A_i)=1$ for $i=1,\ldots, \ell$.

Applying Theorem~\ref{T:MainRec}  for $A_0=\cdots=A_\ell=A$ and
$k_1=\cdots=k_\ell=0$ we deduce:
\begin{corollary}\label{C:MainRec}
Under the assumptions of Theorem~\ref{T:MainConv}, for every set
$A\in \X$ we have
\begin{equation}\label{E:cvb'}
\lim_{N\to\infty} \frac{1}{N}\sum_{n=1}^N\mu(A\cap T_1^{-[a_1(n)]}A
\cap\cdots \cap T_\ell^{-[a_\ell(n)]}A)\geq (\mu(A))^{\ell +1}.
\end{equation}
\end{corollary}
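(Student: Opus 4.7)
The corollary is essentially an immediate specialization of Theorem~\ref{T:MainRec}, so the plan is simply to check that its hypotheses apply in this restricted setting. First I would handle the trivial case $\mu(A)=0$: then the right hand side of \eqref{E:cvb'} equals zero, each integrand on the left is nonnegative, and the inequality holds with no work.

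In the nontrivial case $\mu(A)>0$, the plan is to set $A_0=A_1=\cdots=A_\ell=A$ and $k_1=\cdots=k_\ell=0$. Then
\[
\mu(A_0\cap T_1^{k_1}A_1\cap\cdots\cap T_\ell^{k_\ell}A_\ell)=\mu(A)=\alpha>0,
\]
so the hypothesis of Theorem~\ref{T:MainRec} is satisfied. Applying that theorem directly yields
\[
\lim_{N\to\infty}\frac{1}{N}\sum_{n=1}^N\mu(A\cap T_1^{-[a_1(n)]}A\cap\cdots\cap T_\ell^{-[a_\ell(n)]}A)\geq \alpha^{\ell+1}=(\mu(A))^{\ell+1},
\]
which is exactly \eqref{E:cvb'}.

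There is essentially no obstacle here; the corollary is a formal consequence and all of the real work has already been done in the proof of Theorem~\ref{T:MainRec}, which itself reduces via Theorem~\ref{T:MainConv} to the pointwise inequality $\int f\cdot\prod_i\E(f|\cI_{T_i})\,d\mu\geq (\int f\,d\mu)^{\ell+1}$ coming from Lemma~1.6 of \cite{Chu11}. Thus the proof is at most a two-line verification that the specialization is legitimate.
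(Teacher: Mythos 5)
Your proof is correct and follows exactly the paper's route: the paper derives Corollary~\ref{C:MainRec} by applying Theorem~\ref{T:MainRec} with $A_0=\cdots=A_\ell=A$ and $k_1=\cdots=k_\ell=0$. Your explicit handling of the degenerate case $\mu(A)=0$ is a small, harmless addition that the paper leaves implicit.
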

Comments similar to those made after the statement of
Theorem~\ref{T:MainConv} apply here too. Furthermore, if $\ell=2$
and $a_1=a_2$, then no power of $\mu(A)$ can be used as a lower
bound in \eqref{E:cvb'} (see Theorem~ 2.1 in \cite{BHK05}).

\subsection{Results in topological dynamics and combinatorics} \label{SS:TopDyn}
Let $(X,d)$ be  a compact metric space and  $T_1,\ldots,
T_\ell\colon X\to X$ be invertible commuting continuous
transformations. There exists a Borel measure that is left invariant
by all transformations. If in addition every transformation is
minimal (i.e. $\overline{(T_i^nx)_{n\in\N}}=X$ for every $x\in X$),
then this measure gives positive value to every non-empty open set,
and for every $x\in X$ and non-empty open set $U$ the set $\{n\in
\N\colon T_i^nx\in U\}$ has bounded gaps (see for example
\cite{Fu81a}). As a consequence, for every $x\in X$ and non-empty
open set $U$ we have $ \lim_{N\to\infty}\frac{1}{N}\sum_{n=1}^N{\bf
1}_U(T_i^nx)>0$, and using   Theorem~\ref{T:MainConv}  we get for
almost every $x\in X$ (and hence for a dense set of $x\in X$) that
$$
\limsup_{N\to\infty} \frac{1}{N}\sum_{n=1}^N {\bf
1}_{U_1}(T_1^{[a_1(n)]}x)\cdots {\bf
1}_{U_\ell}(T_\ell^{[a_\ell(n)]}x)>0
$$
whenever the sets   $U_1,\ldots, U_\ell$  are taken from a given
countable basis of non-open sets. Using this, we deduce the
following (the set of $x\in X$ for which \eqref{E:dense} holds is
trivially $G_\delta$ and $T_i$-invariant):
\begin{theorem}\label{T:TopDyn}
Let $\H$ be a Hardy field and  $a_1,\ldots,a_\ell \in \G\cap \H$ be
functions with different growth rates. Let $(X,d)$ be a compact
metric space and $T_1,\ldots, T_\ell\colon X\to X$ be invertible
commuting minimal transformations.
Then for a residual and $T_i$-invariant set of  $x\in X$ we have
\begin{equation}\label{E:dense}
\overline{\big\{(T_1^{[a_1(n)]}x,\ldots, T_\ell^{
[a_\ell(n)]}x)\colon n\in \N \big\}}=X\times \cdots \times X.
\end{equation}
\end{theorem}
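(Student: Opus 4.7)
The plan is to combine Theorem~\ref{T:MainConv} with a Baire category argument, using a common invariant measure and the minimality hypothesis on each $T_i$. First I fix a countable base $\mathcal{B}$ of nonempty open subsets of $X$ and, for each tuple $\vec U=(U_1,\ldots,U_\ell)\in\mathcal{B}^\ell$, set
$$E_{\vec U}:=\bigcup_{n\in\N}\,\bigcap_{i=1}^\ell T_i^{-[a_i(n)]}U_i,$$
which is manifestly open. The set of $x\in X$ satisfying \eqref{E:dense} equals $\bigcap_{\vec U\in\mathcal{B}^\ell}E_{\vec U}$; it is therefore automatically $G_\delta$, and it is $T_j$-invariant for each $j$ because the diagonal map $(y_1,\ldots,y_\ell)\mapsto(T_jy_1,\ldots,T_jy_\ell)$ is a self-homeomorphism of $X^\ell$ and the $T_i$'s commute, so the orbit of $T_jx$ is the image under this diagonal map of the orbit of $x$. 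Hence it suffices to prove that each $E_{\vec U}$ is dense in $X$.

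To do so, I invoke Markov--Kakutani to produce a Borel probability measure $\mu$ on $X$ invariant under every $T_i$. Since each $T_i$ is minimal, the support of $\mu$ is a closed $T_i$-invariant set, hence equals $X$; in particular $\mu(V)>0$ for every nonempty open $V\subset X$. Fixing such a $V$ and applying Theorem~\ref{T:MainConv} with $f_i={\bf 1}_{U_i}$ gives, in $L^2(\mu)$,
$$\frac{1}{N}\sum_{n=1}^N T_1^{[a_1(n)]}{\bf 1}_{U_1}\cdots T_\ell^{[a_\ell(n)]}{\bf 1}_{U_\ell}\;\longrightarrow\;\prod_{i=1}^\ell\E({\bf 1}_{U_i}|\cI_{T_i}).$$
Taking the inner product with ${\bf 1}_V$ yields
$$\lim_{N\to\infty}\frac{1}{N}\sum_{n=1}^N\mu\!\Bigl(V\cap\bigcap_{i=1}^\ell T_i^{-[a_i(n)]}U_i\Bigr)=\int_V\prod_{i=1}^\ell\E({\bf 1}_{U_i}|\cI_{T_i})\,d\mu.$$

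The only step that actually uses the minimality assumption is the verification that this integral is strictly positive. Because $T_i$ is minimal and $U_i$ is open and nonempty, the return-time set $\{n\colon T_i^nx\in U_i\}$ has bounded gaps for every $x\in X$, so $\liminf_{N\to\infty}\frac{1}{N}\sum_{n=1}^N{\bf 1}_{U_i}(T_i^nx)$ is uniformly bounded below by a positive constant. Birkhoff's ergodic theorem identifies this liminf with $\E({\bf 1}_{U_i}|\cI_{T_i})(x)$ for $\mu$-a.e.\ $x$, so $\E({\bf 1}_{U_i}|\cI_{T_i})>0$ $\mu$-a.e.; combined with $\mu(V)>0$ this forces the integral to be positive. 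Consequently some $n\in\N$ satisfies $\mu\bigl(V\cap\bigcap_i T_i^{-[a_i(n)]}U_i\bigr)>0$, so this intersection is nonempty and $V$ meets $E_{\vec U}$. Since $V$ was an arbitrary nonempty open set, $E_{\vec U}$ is dense, which finishes the proof.

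The single point that requires genuine care is the strict positivity of $\prod_i\E({\bf 1}_{U_i}|\cI_{T_i})$ almost everywhere, which follows cleanly from the bounded-gap consequence of minimality combined with Birkhoff's theorem. Everything else is standard Baire category and Markov--Kakutani bookkeeping around the limit formula of Theorem~\ref{T:MainConv}.
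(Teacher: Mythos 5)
Your proof is correct and follows essentially the same route as the paper: produce a common invariant measure with full support, use minimality and bounded gaps to get $\E({\bf 1}_{U_i}|\cI_{T_i})>0$ a.e., and then invoke Theorem~\ref{T:MainConv} to show each open set $E_{\vec U}$ is dense so that their intersection is a residual $G_\delta$. The only cosmetic difference is that you pair the averages with ${\bf 1}_V$ to show density directly, whereas the paper notes that a.e.\ $x$ (a dense set, by full support) lies in $E_{\vec U}$; the underlying mechanism is identical.
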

 Examples in \cite{Pav08} show that even
when $\ell=1$ identity \eqref{E:dense} may fail for an uncountable
set of $x\in X$. In fact,
  for every sequence of integers $(a(n))$
with zero density, it is shown in \cite{Pav08} that there exists a
totally minimal and uniquely ergodic topological dynamical system
$(X,d,T)$ such that for an uncountable set of $x\in X$ one has
$x\notin \overline{\{T^{a(n)}x,n\in \N\}}$. Examples of minimal
rotations on finite cyclic groups show that if $p\in \Z[t]$ is any
polynomial $\neq \pm t +c$, then one may have
$\overline{\{T^{p(n)}x,n\in \N\}}\neq X$ for every $x\in X$.

Every continuous transformation $T$ on a compact metric space
$(X,d)$ has a non-empty closed $T$-invariant set $Y\subset X$ such
that the transformation $T\colon Y\to Y$ is minimal (see for example
\cite{Fu81a}).  Using this, and Theorem~\ref{T:TopDyn} for
$T_1=\cdots =T_\ell=T$, we deduce:
\begin{corollary}\label{C:TopDyn}
Let $\H$ be a Hardy field and  $a_1,\ldots,a_\ell \in \G\cap \H$ be
functions with different growth rates. Let $(X,d)$ be a compact
metric space and $T\colon X\to X$ be an invertible  continuous
transformation.
Then for  a non-empty and    $T$-invariant set  of $x\in X$ we have
\begin{equation}\label{E:dense'}
\overline{\big\{(T^{[a_1(n)]}x,\ldots, T^{ [a_\ell(n)]}x)\colon n\in
\N \big\}}=\overline{\{T^nx\colon n\in\N\}}\times\cdots\times
\overline{\{T^nx\colon n\in\N\}} .
 \end{equation}
\end{corollary}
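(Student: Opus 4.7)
The plan is to reduce the statement directly to Theorem~\ref{T:TopDyn} by passing to a minimal subsystem, as hinted by the sentence preceding the statement. First, I would invoke the standard fact (a Zorn's lemma argument on the family of non-empty closed $T$-invariant subsets of $X$) to produce a non-empty closed $T$-invariant set $Y\subset X$ on which $T|_Y$ is minimal. Then I would apply Theorem~\ref{T:TopDyn} to the system $(Y,d,T|_Y,\ldots,T|_Y)$ with $\ell$ copies of the same transformation: commutativity is trivial, and minimality holds by construction, so the hypotheses are in place.

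This application yields a residual and $T|_Y$-invariant subset $R\subset Y$ such that for every $x\in R$,
$$
\overline{\big\{(T^{[a_1(n)]}x,\ldots,T^{[a_\ell(n)]}x)\colon n\in\N\big\}}=Y\times\cdots\times Y.
$$
To finish, I would check three routine items. First, $R$ is non-empty because $Y$ is a closed subset of the compact metric space $X$, hence a complete metric space, and residual subsets of complete metric spaces are non-empty. Second, $R$ is $T$-invariant as a subset of $X$ since it is $T|_Y$-invariant and $Y$ itself is $T$-invariant. Third, for each $x\in R\subset Y$, minimality of the homeomorphism $T|_Y$ implies that the forward orbit $\{T^n x\colon n\in\N\}$ is dense in $Y$, so $\overline{\{T^n x\colon n\in\N\}}=Y$ and the right-hand side of \eqref{E:dense'} equals $Y\times\cdots\times Y$, matching the left-hand side displayed above.

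The main obstacle is not really an obstacle at all: Theorem~\ref{T:TopDyn} carries essentially all the dynamical content, and the remainder of the argument is a bookkeeping exercise. The only subtlety worth flagging is the implicit use of the fact that minimality of a $\Z$-action on a compact metric space forces the forward $\N$-orbit of every point to be dense, which is a standard property of minimal homeomorphisms and is precisely what allows us to identify $\overline{\{T^n x\colon n\in\N\}}$ with $Y$ for $x\in R$.
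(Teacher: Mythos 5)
Your proposal matches the paper's own (tersely stated) argument exactly: pass to a minimal subsystem $Y$ via Zorn's lemma, apply Theorem~\ref{T:TopDyn} with $T_1=\cdots=T_\ell=T|_Y$, and observe that for $x$ in the resulting residual $T|_Y$-invariant set, minimality forces $\overline{\{T^nx\colon n\in\N\}}=Y$, identifying both sides of \eqref{E:dense'}. The bookkeeping you supply (non-emptiness of the residual set in the compact space $Y$, $T$-invariance of $R$ within $X$, and density of forward orbits under a minimal homeomorphism) is correct and fills in what the paper leaves implicit.
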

Again, simple examples show this result fails if $\ell=1$ and $p\in
\Z[t]$ is any polynomial $\neq \pm t +c$.

\subsection{Combinatorial consequences}
For a set $\Lambda\subset\mathbb{\Z}^d$, we define its upper density
by
$\bar{d}(\Lambda)\=\limsup_{N\to\infty}|\Lambda\cap[-N,N]^d|/(2N)^d$
(any other shift invariant mean   works for our purposes). Combining
the previous multiple recurrence result with a multidimensional
version of Furstenberg's correspondence principle \cite{Fu81a}, we
deduce the following consequence in combinatorics:
\begin{theorem}\label{T:MainComb}
Let  $\H$ be a Hardy field, $a_1,\ldots,a_\ell \in \G\cap \H$ be
functions with different growth rates,
 and  $\v_1,\ldots,\v_\ell\in
\Z^d$ be vectors. Suppose that the sets  $E_0, E_1, \ldots,
E_\ell\subset \Z^d$ satisfy
$$ \bar{d}(E_0\cap (E_1+k_1)\cap \cdots
\cap (E_\ell +k_\ell))=\alpha>0 $$
 for some $k_1,\ldots, k_\ell\in
\Z$. Then
$$
\liminf_{N\to\infty} \frac{1}{N}\sum_{n=1}^N \overline{d}(E_0\cap
(E_1-[a_1(n)]\v_1)\cap\cdots\cap (E_\ell-[a_\ell(n)]\v_\ell)) \geq
\alpha^{\ell+1}.
$$
\end{theorem}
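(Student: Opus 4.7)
The plan is to derive Theorem~\ref{T:MainComb} from Theorem~\ref{T:MainRec} via a multidimensional Furstenberg correspondence principle. I would take $X\=(\{0,1\}^{\Z^d})^{\ell+1}$ with the product topology and let $\Z^d$ act on $X$ by the diagonal shift $(S_\v\eta)_i(w)\=\eta_i(w+\v)$. Setting $T_i\=S_{\v_i}$ gives commuting continuous transformations on $X$. Let $\omega\=(\mathbf{1}_{E_0},\ldots,\mathbf{1}_{E_\ell})\in X$ and let $A_i\=\{\eta\in X:\eta_i(0)=1\}$; each $A_i$ is clopen. A direct unwinding of the definitions gives the key identity
$$
S_w\omega\in A_0\cap T_1^{-n_1}A_1\cap\cdots\cap T_\ell^{-n_\ell}A_\ell \iff w\in E_0\cap(E_1-n_1\v_1)\cap\cdots\cap(E_\ell-n_\ell\v_\ell)
$$
for all $w\in\Z^d$ and $n_1,\ldots,n_\ell\in\Z$ (reading the translates $E_i+k_i$ in the hypothesis as $E_i+k_i\v_i$, matching the exponents $T_i^{k_i}$ in Theorem~\ref{T:MainRec}).

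Next I would build the invariant measure. Writing $B_N\=[-N,N]^d$, I pick $N_k\to\infty$ along which $|B_{N_k}|^{-1}|E_0\cap(E_1+k_1\v_1)\cap\cdots\cap(E_\ell+k_\ell\v_\ell)\cap B_{N_k}|\to\alpha$, which is possible since $\alpha$ is a $\limsup$. By weak-$*$ compactness of probability measures on the compact metric space $X$, a diagonal extraction refines $(N_k)$ so that $\mu_{N_k}\=|B_{N_k}|^{-1}\sum_{w\in B_{N_k}}\delta_{S_w\omega}$ converges weakly to a Borel probability measure $\mu$. Since $(B_N)$ is a F\o{}lner sequence for $\Z^d$, the limit $\mu$ is $S_\v$-invariant for every $\v\in\Z^d$, and in particular $T_i$-invariant. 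Because indicators of finite intersections of cylinders are continuous, the displayed identity yields both $\mu(A_0\cap T_1^{k_1}A_1\cap\cdots\cap T_\ell^{k_\ell}A_\ell)=\alpha$ (by the choice of $(N_k)$) and, for every fixed $n$, the pointwise inequality
$$
\mu(A_0\cap T_1^{-[a_1(n)]}A_1\cap\cdots\cap T_\ell^{-[a_\ell(n)]}A_\ell)\leq \bar d(E_0\cap(E_1-[a_1(n)]\v_1)\cap\cdots\cap(E_\ell-[a_\ell(n)]\v_\ell)),
$$
since the left side is a limit of cube densities along $(N_k)$ while the right side is the full $\limsup$.

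Finally, applying Theorem~\ref{T:MainRec} to the system $(X,\X,\mu,T_1,\ldots,T_\ell)$ with the sets $A_0,\ldots,A_\ell$ and constants $k_1,\ldots,k_\ell$ yields $\lim_{N\to\infty}\frac1N\sum_{n=1}^N\mu(A_0\cap T_1^{-[a_1(n)]}A_1\cap\cdots\cap T_\ell^{-[a_\ell(n)]}A_\ell)\geq\alpha^{\ell+1}$. Averaging the pointwise inequality above and passing to $\liminf_N$ converts the left hand side into an average of $\bar d$-quantities, finishing the proof. I do not expect serious difficulty: the delicate point is producing a single subsequence $(N_k)$ that both realizes the density $\alpha$ for the prescribed intersection and yields weak-$*$ convergence of the empirical measures, which is handled by extracting the density-realizing subsequence first and then thinning it by weak-$*$ compactness.
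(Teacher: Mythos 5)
Your proof is correct and follows the same route the paper takes: it unwinds the multidimensional Furstenberg correspondence principle (exactly what the paper invokes from \cite{Fu81a} without detail) and then invokes Theorem~\ref{T:MainRec}. You also correctly resolve the paper's small typographical ambiguity by reading $E_i + k_i$ as $E_i + k_i\v_i$, which is what makes the hypothesis line up with the exponents $T_i^{k_i}$ in Theorem~\ref{T:MainRec}.
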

Using this for $E_0=\cdots=E_\ell=E$ and $k_1=\cdots=k_\ell=0$, we
get the following strengthening of the combinatorial result
advertised in the introduction:
\begin{corollary}\label{C:MainComb1}
Let $\H$ be a Hardy field, $a_1,\ldots,a_\ell \in \G\cap \H$ be
functions with different growth rates,
 and $\v_1,\ldots,\v_\ell\in
\Z^d$ be vectors.
 Then for every  set  $E\subset \Z^d$ we have
$$
\liminf_{N\to\infty} \frac{1}{N}\sum_{n=1}^N \overline{d}(E\cap
(E-[a_1(n)]\v_1)\cap\cdots\cap (E-[a_\ell(n)]\v_\ell)) \geq
\big(\overline{d}(E)\big)^{\ell+1}.
$$
\end{corollary}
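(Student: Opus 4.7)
The plan is to deduce this corollary as a direct specialization of Theorem~\ref{T:MainComb}, with $E_0 = E_1 = \cdots = E_\ell = E$ and $k_1 = \cdots = k_\ell = 0$. Before invoking that theorem I would first dispose of the degenerate case $\overline{d}(E) = 0$: in that situation the right-hand side $(\overline{d}(E))^{\ell+1}$ vanishes, and the left-hand side is a $\liminf$ of nonnegative quantities, so the inequality holds trivially. This case is worth singling out because Theorem~\ref{T:MainComb} is stated under the positivity hypothesis $\alpha > 0$, and I do not want to invoke it with $\alpha = 0$.

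So assume $\overline{d}(E) = \alpha > 0$. I would then check the hypothesis of Theorem~\ref{T:MainComb} under the proposed specialization: with the above choices,
$$
\bar{d}\bigl(E_0 \cap (E_1 + k_1) \cap \cdots \cap (E_\ell + k_\ell)\bigr) = \bar{d}(E) = \alpha > 0,
$$
so the hypothesis is satisfied with this value of $\alpha$. Applying the conclusion of Theorem~\ref{T:MainComb} and substituting $E$ for each $E_i$ gives
$$
\liminf_{N\to\infty} \frac{1}{N}\sum_{n=1}^N \overline{d}\bigl(E\cap (E-[a_1(n)]\v_1)\cap\cdots\cap (E-[a_\ell(n)]\v_\ell)\bigr) \geq \alpha^{\ell+1} = \bigl(\overline{d}(E)\bigr)^{\ell+1},
$$
which is precisely the claim.

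There is essentially no real obstacle in the argument itself; the substance has already been absorbed into Theorem~\ref{T:MainComb}, which in turn is established via a multidimensional Furstenberg correspondence principle applied to the ergodic-theoretic lower bound of Theorem~\ref{T:MainRec}, and ultimately rests on the limit formula of Theorem~\ref{T:MainConv}. The only minor care required is isolating the trivial case $\overline{d}(E) = 0$ so that we respect the positivity assumption of the theorem being invoked.
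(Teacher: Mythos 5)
Your proof is exactly the paper's: Corollary~\ref{C:MainComb1} is obtained by applying Theorem~\ref{T:MainComb} with $E_0=\cdots=E_\ell=E$ and $k_1=\cdots=k_\ell=0$. The separate handling of the trivial case $\overline{d}(E)=0$ is a reasonable bit of extra care that the paper leaves implicit.
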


Theorem~\ref{T:MainComb} is also non-vacuous for syndetic sets
$E_0,\ldots, E_\ell\subset \N$ (in this case $\alpha$ can be as
$(\prod_{i=0}^\ell s_i)^{-1}$ where $s_i$ is the syndeticity
constant of the set $E_i$) and gives the following:
\begin{corollary}\label{C:MainComb2}
Let $\H$ be a Hardy field and  $a_1,\ldots,a_\ell \in \G\cap \H$ be
functions with different growth rates. Let $E_0, E_1, \ldots,
E_\ell\subset \N$ be syndetic sets. Then there exist $m,n\in \N$
such that
$$
 m\in E_0,\  m+[a_1(n)]\in E_1, \ \ldots \ , m+ [a_\ell(n)]\in
 E_\ell.
$$
\end{corollary}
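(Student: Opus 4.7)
The plan is to deduce this corollary as a specialization of Theorem~\ref{T:MainComb}: take $d=1$ and all vectors $\v_i = 1$, so that $E_i - [a_i(n)]\v_i = E_i - [a_i(n)]$ and the containment $m \in E_i - [a_i(n)]$ is exactly $m + [a_i(n)] \in E_i$. With this setup, Theorem~\ref{T:MainComb} delivers a positive lower bound for $\liminf_N \frac{1}{N}\sum_{n=1}^N \overline{d}(E_0 \cap (E_1 - [a_1(n)]) \cap \cdots \cap (E_\ell - [a_\ell(n)]))$, from which any one $n \in \N$ for which the summand is positive produces a nonempty intersection and hence the required $m$.

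The only nontrivial step, and what I expect to be the main obstacle, is verifying the positive density hypothesis of Theorem~\ref{T:MainComb}: I must exhibit integers $k_1, \ldots, k_\ell$ for which $E_0 \cap (E_1 + k_1) \cap \cdots \cap (E_\ell + k_\ell)$ has positive upper density. This is a soft pigeonhole argument tailored to syndeticity. Let $s_i$ be a syndeticity constant for $E_i$, so that $\N = \bigcup_{j=0}^{s_i-1}(E_i - j)$. Intersecting these $\ell+1$ covers writes $\N$ as a union of at most $\prod_{i=0}^\ell s_i$ sets of the form $\bigcap_{i=0}^\ell (E_i - j_i)$ with $j_i \in \{0, \ldots, s_i - 1\}$; by subadditivity of upper density, one such set $F$ satisfies $\overline{d}(F) \geq \bigl(\prod_{i=0}^\ell s_i\bigr)^{-1}$. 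Translating $F$ by $j_0$ places it inside $E_0$ and inside each $E_i + (j_0 - j_i)$, so with $k_i := j_0 - j_i$ the intersection $E_0 \cap (E_1 + k_1) \cap \cdots \cap (E_\ell + k_\ell)$ contains $F + j_0$, and in particular has positive upper density $\alpha$.

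With the hypothesis of Theorem~\ref{T:MainComb} verified, the theorem furnishes $\liminf_N \frac{1}{N}\sum_{n=1}^N \overline{d}(E_0 \cap \bigcap_{i=1}^\ell (E_i - [a_i(n)])) \geq \alpha^{\ell+1} > 0$, forcing some $n \in \N$ (in fact infinitely many) with $\overline{d}(E_0 \cap \bigcap_{i=1}^\ell (E_i - [a_i(n)])) > 0$. Picking any $m$ in this intersection completes the proof: $m \in E_0$, and each $m + [a_i(n)] \in E_i \subset \N$ is automatically a positive integer. All genuine analytic content has already been absorbed into Theorem~\ref{T:MainComb}, and ultimately into the limit formula \eqref{E:ProductForm}; the remaining work here is purely combinatorial, and the main ``obstacle'' is simply writing the syndetic pigeonhole cleanly.
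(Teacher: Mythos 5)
Your proposal is correct and follows exactly the route the paper takes: the paper states the corollary as an immediate consequence of Theorem~\ref{T:MainComb}, remarking only that for syndetic sets one may take $\alpha = (\prod_{i=0}^\ell s_i)^{-1}$, which is precisely the pigeonhole estimate you spell out. Your verification of the positive-density hypothesis and the specialization $d=1$, $\v_i=1$ are exactly the details the paper leaves implicit.
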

Corollary~\ref{C:MainComb2}   enables us to solve some non-shift
invariant systems of equations within every syndetic set. For
instance, for a syndetic set $E\subset \N$,  we can take $E_0\=cE$,
$E_i\= c_i E$, $i=1,\ldots,\ell$,  where $c,c_i$ are arbitrary
positive integers and $cE\=\{ck,k \in E\}$, and deduce that the
system of equations
\begin{align*}
c_1x_1- cx_0=&\ [a_1(n)]\\
 c_2x_2- cx_0=&\ [a_2(n)]\\
\vdots & \\
 c_\ell x_\ell- cx_0=&\ [a_\ell(n)]
\end{align*}
has a solution with $x_0, x_1, \ldots, x_\ell \in E$ and
$n\in\N$.\footnote{Similar results fail for polynomial sequences and
also fail when the set $E$ is only assumed to be piecewise syndetic.
Easy examples show that: $(i)$ If $p\in \Z[t]$ is any polynomial
different than $\pm t+c$ and $k\in\N$ is different than $1$, then
the equation $kx-y=p(n)$ has no solution with $x,y$ belonging in
some set $E$ that is an arithmetic progression. $(ii)$ If $(a(n))$
is a sequence of integers with $a(n+1)-a(n)\to\infty$ and $k\neq 1$,
then there exists a thick set $E$ such that the equation $x-ky=a(n)$
has no solution with $x,y\in E$.} Another consequence is that for
any finite coloring of $\N$, where each color class is a syndetic
set, the previous system has a  solution with   $x_0,\ldots, x_\ell$
having   arbitrary colors. In other words, if the colors classes are
denoted by $C_0,\ldots,C_k$, we can have $x_0\in C_{i_0},\ldots,
x_\ell\in C_{i_\ell}$, where $i_0,\ldots, i_\ell\in \{1,\ldots, k\}$
are arbitrary.




\subsection{Key ingredients and proof plan}

\subsubsection{Key ingredients} The proof of Theorem~\ref{T:MainConv} uses the
following key ingredients:

\medskip \noindent \emph{Gowers-Host-Kra seminorms.} These are
non-negative numbers associated with every bounded measurable
function (see Section~\ref{SS:GHK}). They were defined in a
combinatorial setting in \cite{Gow01} and in an ergodic setting in
\cite{HK05a}.
 We seek to control                                                                                         the $L^2(\mu)$
 norm of the multiple ergodic averages in \eqref{E:ProductForm} by
 the seminorms of the individual functions involved.

\medskip \noindent \emph{Van der Corput's Lemma.} This elementary
estimate, and variations of it (see Section~\ref{SS:VDC}), is the
 key ingredient used
to get the desired seminorm estimates.

\medskip \noindent \emph{Decomposition results.}
These are used to
 replace sequences of the form $(f(T^nx))$
 with sequences that have more desirable properties.
We use two   decompositions, one involving dual sequences
(Proposition~\ref{P:ApprDual}), and another, much deeper one,
involving nilsequences (Theorem~\ref{P:ApprNil}). Both
decompositions originate from \cite{HK05a}.

\medskip \noindent \emph{Equidistribution results on nilmanifolds.}
These are used towards the  end of our argument when one replaces
sequences of the form $(f(T^nx))$ with nilsequences. They enable us
to carry out the finer analysis needed to prove identity
\eqref{E:ProductForm}. The equidistribution results were proved in
\cite{Fr09} using results from \cite{GT09c}  on quantitative
equidistribution of polynomial sequences on nilmanifolds.

\subsubsection{Combining the key ingredients}
Crucial to the proof of Theorem~\ref{T:MainConv} are some seminorm
estimates showing that the limit in \eqref{E:ProductForm} is $0$
when at least one of the functions involved is ``uniform enough''.
 We establish these estimates in two steps. First, we prove them for the function
that is associated with the fastest growing iterate
(Propositions~\ref{P:CharB2special} and \ref{P:CharBspecial}). This
part of the proof borrows ideas from \cite{CFH11} in order to devise
an appropriate inductive scheme (similar to the PET induction of
\cite{Be87a}) based on successive  uses of van der Corput's Lemma.
Next, we use this first step, and the decomposition result of
Proposition~\ref{P:ApprDual}, in order to replace one of the
functions with  a function  that (when evaluated in the orbit of the
corresponding transformation) gives rise to sequences (called dual
sequences) defined by a certain averaging operation. It is then
possible to devise another induction based again on successive uses
of  van der Corput's lemma and produce seminorm estimates for the
function associated with the second fastest growing iterate
(Proposition~\ref{prop:weighted}). Continuing like this, we get
seminorm estimates for  all the functions
(Proposition~\ref{pr:charcnilseq'}).

Using the seminorm estimates and the decomposition result of
Theorem~\ref{P:ApprNil}, we get  that the limit in
\eqref{E:ProductForm} remains unchanged when we replace each
function with a function that pointwise gives rise to nil-sequences.
At this advanced point in the proof, we are in position to apply
known
 equidistribution results  on nilmanifolds from
\cite{Fr09}   to complete the proof of  Theorem~\ref{T:MainConv}.

For technical reasons, complications arise  in implementing the
previous plan when one or more iterates have sub-linear growth.
 These complications are handled  using a variant of the aforementioned
 seminorm  estimates
(Proposition~\ref{T:Rn+r'})  and the equidistribution results on
nilmanifolds (Proposition~\ref{P:Rn+r}).

Recently, a relatively simple method for proving mean convergence of
the polynomial averages \eqref{E:polynomial2} was developed in
\cite{W11} (based on ideas from \cite{Ta08}),
but up to this point it has not been successful in giving detailed
information for the limiting function. Since the precise form of the
limit  is the most crucial part of our main result, and is needed
for applications, it seems that we are forced to carry out the more
refined  analysis  summarized above.

\subsection{Further directions}
We believe that in Theorem~\ref{T:MainConv} (and its various
consequences) the restrictions we impose on the functions
$a_1,\ldots, a_\ell$ can be weakened considerably. We record here a
related problem (a special case of this already appears in
\cite{Fr10,Fr11}):
\begin{problem}\label{C:ConjProduct}
Given a Hardy field $\H$, show that the conclusion of
Theorem~\ref{T:MainConv} holds if the functions $a_1,\ldots,
a_\ell\in \H$ have polynomial growth rate and every  non-trivial
linear combination $a(t)$ of these functions
 satisfies $|a(t)-cp(t)|/\log{t}\to\infty$ for every $c\in \R$ and
every $p\in \Z[t]$.
\end{problem}
When $\ell=1$ the result follows from the equidistribution results
in \cite{Bos94}. The problem is open even when $\ell=2$ and
$T_1=T_2$.

 When the sequences $a_1,\ldots, a_\ell$ are equal, the methods
 used in this article do not seem particularly helpful in  studying the limiting behavior of
 the averages in \eqref{E:ProductForm} (mainly because
 the seminorm estimates we use here fail in this case). We record a related problem (a special case of this already
appears in \cite{Fr10, Fr11}):
\begin{problem}\label{C:ConjCommuting1}
Let $a\colon [c,\infty)\to\R $ be a Hardy field function with
polynomial growth rate that  satisfies
$|a(t)-cp(t)|/\log{t}\to\infty$ for every $c\in \R$ and every $p\in
\Z[t]$.
Show that   for every   system $(X,\mathcal{X},\mu,T_1,\ldots,
T_\ell)$
 and functions
   $f_1,\dots,f_\ell\in L^\infty(\mu)$ the averages
 \begin{equation}\label{E:abra}
  \frac1N \sum_{n=1}^N T^{[a(n)]}_1f_1 \cdots
T_\ell^{[a(n)]}f_\ell
 \end{equation}
converge in $L^2(\mu)$ and their limit is
 $\lim_{N\to\infty}  \frac1N \sum_{n=1}^N T^{n}_1 f_1\cdots  T^{n}_\ell f_\ell$ (this limit exists~\cite{Ta08}).
\end{problem}
The case where $T_1,\ldots, T_\ell$ are powers of a single
transformation was treated in \cite{Fr10}. In the generality stated,
the problem is open even when $\ell=2$ and $a(t)=t^{3/2}$.

Regarding pointwise convergence of the averages in
\eqref{E:ProductForm}, progress has been very scarce. The case
$\ell=1$ was treated in  \cite{BKQW05}, but other than this, even
the simplest cases remain open.
\begin{problem}\label{C:Conjpointwise}
Let $a,b$ be distinct positive  non-integers.  Show that for every
ergodic system $(X,\mathcal{X},\mu,T)$
 and functions
   $f, g \in L^\infty(\mu)$, we have
 $$
 \lim_{N\to\infty} \frac1N \sum_{n=1}^N f(T^{[n^a]}x) \cdot
g(T^{[n^b]}x)=\int f \ d\mu \cdot \int g\ d\mu
$$
for almost every $x\in X$.
\end{problem}
All  cases where both  $a$ and $b$ are greater than $1$ are open.

\subsection{Notational conventions}
The following notation will be used throughout the article:
$\N=\{1,2,\ldots\}$, $Tf=f\circ T$, $T^k=T\circ\cdots\circ T$, ${\bf
1}_E$ is the indicator function of a set $E$,  $\cC^kz$ is $z$ if
$k$ is even and $\bar{z}$ if $z$ is odd. We often write $\infty$
instead of $+\infty$.
If $a(t),b(t)$ are real valued functions
defined on some half-line $[c,\infty)$ we write $a(t)\prec b(t)$  if
$a(t)/b(t) \to 0$ as $t\to \infty$.
We write $a(t)\ll b(t)$ if there exists $C\in \R$ such that
 $|a(t)|\leq C|b(t)|$ for all large enough $t\in \R$, and
  $a\sim b$, if $a(t)/b(t)$ converges to a nonzero constant as
$t\to\infty$.
 We denote by
 $S_ha$ the function defined by $(S_ha)(t)=a(t+h)$.
A function $a\colon
 [c,\infty)\to \R$ has degree $d$ if $t^d\ll a(t)\prec t^{d+1}$.
 By $\H$ we
 denote a translation invariant Hardy field and by $\cG$ the set of functions $a\colon
 [c,\infty)\to \R$ that satisfy $t^k\log{t}\prec a(t)\prec t^{k+1}$
 for some integer $k\geq 0$.
  If
$(X,\X,\mu,T)$ is a system, $\cI_T$ denotes the $\sigma$-algebra of
$T$-invariant sets and $\E(f|\cI_T)$ the conditional expectation on
$\cI_T$.

\section{Background Material}\label{S:Background}
In this section we put together  some background material that we
use throughout this article.
\subsection{Basic facts about Hardy fields}\label{SS:Hardy}
  Let $B$ be the collection of equivalence classes of real valued
  functions  defined on some half line $[c,\infty)$, where we
  identify two functions if they agree eventually.\footnote{The
    equivalence classes just defined are often called \emph{germs of
      functions}. We choose to use the word function when we refer to
    elements of $B$ instead, with the understanding that all the
    operations defined and statements made for elements of $B$ are
    considered only for sufficiently large values of $t\in \R$.}  A
  \emph{Hardy field} $\mathcal{H}$ is a subfield of the ring $(B,+,\cdot)$ that is
  closed under differentiation (a term first used by the
Bourbaki group in \cite{Bour61}). A \emph{Hardy field function} is a
function that belongs to some Hardy field.
 \emph{We are going to assume
throughout that all Hardy
  fields   mentioned are translation invariant},   meaning that
   if $a(t)\in \H$,
  then  $a(t+h)\in \H$ for every $h\in \R$).


A particular example of such a Hardy field is the set $\LE$ that was
introduced by Hardy in \cite{Ha10} and consists of all
\emph{logarithmic-exponential functions}, meaning all functions
defined on some half line $(c,\infty)$ by a finite combination of
the symbols $+,-,\times, :, \log, \exp$, operating on the real
variable $t$ and on real constants.
 For example   functions such as  $t^{\sqrt{2}}$,    $t(\log{t})^2$, $e^{t^2}$,
$e^{\sqrt{\log \log t}}/\log(t^2+1)$,  are all elements of $\LE$.
 Another, even more extensive example was constructed by Boshernitzan
 in \cite{Bo81}. It  satisfies the following properties:
\begin{itemize}
\item it contains the set $\LE$;

\item  it is closed under  integration; and

\item  it is closed under composition of functions that increase to infinity.
\end{itemize}

Every Hardy field function is eventually monotonic. If one of the
functions $a,b\colon [c,\infty)\to \R$ belongs to a Hardy field, and
the other function  belongs to the same Hardy field or to $\LE$,
then the  limit $\lim_{t\to\infty}a(t)/b(t)$ exists (possibly
infinite). This property is key and  will often justify our use of
l'Hopital's rule. \emph{We are going to freely use all these
properties without any further explanation in the sequel.} The
reader can find further discussion about Hardy fields  in
\cite{Bo81, Bos94}  and the references therein.



\begin{definition}
We say that two functions   $a,b\colon [c,\infty)\to \R$
  have \emph{the same growth rate}, and write $a\sim b$,
if $a(t)/b(t)$ converges to a nonzero constant as $t\to\infty$. We
say that the function $a\colon [c,\infty)\to\R$ has \emph{polynomial
growth rate} if $a(t)\prec t^k$ for some $k\in \N$.
\end{definition}
Notice that if the functions $a,b$ belong to the same Hardy field,
then one of the following three alternatives holds $a\prec b$,
$b\prec a$, $a\sim b$. A key property of Hardy field functions  with
polynomial growth is that we can relate their growth rates with the
growth rates of their derivatives:
\begin{lemma}\label{L:properties}
 Let $a\colon [c,\infty)\to \R$  be a Hardy field function with  polynomial growth.
\begin{enumerate}[label=(\roman{*})]
\item
 If $a\succ 1$, then $a'\ll a/t$.

\item If  $a\succ t^\varepsilon$ for some $\varepsilon>0$, then
$a'\sim a /t$ and for every non-zero $h\in \R$ we have $S_ha-a\sim
a/t$.
\end{enumerate}
\end{lemma}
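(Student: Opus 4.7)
The plan is to study the single auxiliary function $L(t)\=ta'(t)/a(t)$. Since the Hardy field $\H$ is closed under differentiation and under the field operations, and since $a\succ 1$ guarantees that $a(t)\neq 0$ for large $t$, this $L$ lies in $\H$, and therefore $\ell\=\lim_{t\to\infty} L(t)$ exists in $[-\infty,+\infty]$. Both parts of the lemma will be proved by pinning down where this limit can lie.

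For (i), I would rule out $\ell=\pm\infty$ using the growth hypothesis. If $\ell=+\infty$ then for every $M>0$ one has $(\log|a|)'(t)=a'(t)/a(t)>M/t$ for all large $t$, and integrating from some $t_0$ yields $|a(t)|\geq C_M\, t^M$; choosing $M$ larger than the degree of $a$ contradicts the polynomial growth hypothesis. If $\ell=-\infty$ the same integration gives $|a(t)|\leq C_M\, t^{-M}\to 0$, which contradicts $a\succ 1$ (that forces $|a(t)|\to\infty$). Hence $L$ is bounded, i.e.\ $a'\ll a/t$, which is (i).

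For the first claim of (ii), part (i) already gives $\ell\in\R$, so it only remains to exclude $\ell=0$. If $\ell=0$, then $a'/a=o(1/t)$, and applying l'Hopital to $\log|a(t)|/\log t$ (or repeating the integration above with $M$ replaced by an arbitrary $\delta>0$) shows $a\prec t^\delta$ for every $\delta>0$, contradicting $a\succ t^\varepsilon$. Therefore $\ell$ is a nonzero real number, which by definition is exactly $a'\sim a/t$.

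For the second claim of (ii), I would apply the mean value theorem to write $S_h a(t)-a(t)=h\,a'(\xi_t)$ for some $\xi_t$ between $t$ and $t+h$, and then combine $a'\sim a/t$ with the asymptotic $a'(\xi_t)\sim a'(t)$ to conclude $S_h a-a\sim a/t$. Since $\H$ is translation invariant, $S_h a'/a'\in\H$, so $\lambda\=\lim_{t\to\infty} a'(t+h)/a'(t)$ exists; if $\lambda\neq 1$ then iterating would force $a'(nh)$ to grow or decay exponentially in $n$, incompatible with $a'\sim a/t$ having polynomial growth, so necessarily $\lambda=1$. The eventual monotonicity of the Hardy field function $a'$ then sandwiches $a'(\xi_t)/a'(t)$ between $1$ and $a'(t+h)/a'(t)\to 1$. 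The main obstacle I expect is precisely this last sandwiching step: transferring the Hardy field limit from the explicit shift $t+h$ to the unknown intermediate point $\xi_t$ is the subtle part, and if it proves awkward I would fall back on the integral form $S_h a-a=\int_0^h a'(t+s)\,ds$ and use the same monotonicity to control $a'(t+s)/a'(t)$ uniformly in $s\in[0,h]$.
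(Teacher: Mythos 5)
Your proof is correct and follows the paper's basic strategy --- study $\lim_{t\to\infty} ta'(t)/a(t)$, invoke the mean value theorem, and handle the intermediate point $\xi_t$ via eventual monotonicity --- but several intermediate steps differ. For part (i) and the first claim of (ii), the paper applies l'Hopital to identify $\lim ta'/a$ with $\lim \log|a(t)|/\log t$ and reads off the answer directly from the growth hypotheses, while you rule out $\ell=\pm\infty$ and $\ell=0$ by integrating $a'/a$ and comparing with $t^{\pm M}$; both are clean and essentially equivalent. The more substantive divergence is in the last claim. The paper converts the question from $a'$ to $a$ by using the already-established $a'\sim a/t$ to write $a'(\xi_t)/a'(t)\sim a(\xi_t)/a(t)$ (since $\xi_t/t\to 1$), then gets $a(\xi_t)/a(t)\to 1$ from the eventual monotonicity of $a$ together with the immediate estimate $\log\bigl(a(t+h)/a(t)\bigr)=\int_t^{t+h}a'/a\,ds\to 0$. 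You instead stay entirely with $a'$, first proving $\lim_{t\to\infty} a'(t+h)/a'(t)=1$ by a geometric-growth contradiction and then sandwiching $a'(\xi_t)/a'(t)$ via the monotonicity of $a'$. Both routes work, but be aware that your contradiction needs \emph{both} a polynomial upper bound on $|a'|$ (from polynomial growth of $a$) \emph{and} a polynomial lower bound $|a'|\succ t^{\varepsilon-1}$ (from $a'\sim a/t$ and $a\succ t^\varepsilon$): without the latter, exponential decay of $a'(t+nh)$ would not contradict anything, and the phrase ``incompatible with $a'\sim a/t$ having polynomial growth'' glosses over this. The paper's reduction to $a$ is slightly more economical precisely because the needed estimate on $a$ is one line. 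A last small point: you assert $L=ta'/a\in\H$, which presupposes $t\in\H$; this need not hold for an arbitrary Hardy field. What one actually uses is that $a'/a\in\H$ (field operations, $a$ eventually nonzero), $1/t\in\LE$, and the ratio of a Hardy field function by an $\LE$ function has a limit in $[-\infty,+\infty]$, so $\lim L$ exists --- the conclusion you need is the same.
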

\begin{proof}
Applying l'Hopital's rule  we get
\begin{equation}\label{E:log}
\lim_{t\to\infty}\frac{ta'(t)}{a(t)}=\lim_{t\to\infty}\frac{(\log{|a(t)|})'}{(\log{t})'}=
\lim_{t\to\infty}\frac{\log{|a(t)|}}{\log{t}}.
\end{equation}
Since $a(t)$ has polynomial growth,   the last  limit is a
non-negative real number. Hence, $a'\ll a/t$.

If furthermore one has $t^\varepsilon \prec a(t)$ for some
$\varepsilon>0$ and $a(t)$ has polynomial growth,   then the
previous limit is a positive real number. This implies that $a'\sim
a/t$. Lastly, suppose that $h>0$ (a similar argument applies if
$h<0$). The mean value theorem gives that
$$
a(t+h) -a(t)=h a'(\xi_t)
$$
for some $\xi_t\in [t, t+h]$. Applying l'Hopital's rule  we get
$a'(\xi_t)/a'(t)\sim a(\xi_t)/a(t)$ and one easily sees  that
$a(\xi_t)/a(t)\to 1$. Combining the above we get $S_ha-a\sim a'$.
The proof is complete since by the first claim $a' \sim a /t$.
\end{proof}

\subsection{Basic facts from ergodic theory}
A system $(X,\X,\mu, T)$ is a Lebesgue probability space
$(X,\X,\mu)$ together with  an invertible measure preserving
transformations $T\colon X\to X$.
\subsubsection*{The ergodic theorem.} The
\emph{ergodic theorem} states that for  every system $(X,\X,\mu,T)$
 and   function $f\in L^1(\mu)$ we have  for  almost every $x\in X$
 that
$$
\lim_{N\to\infty}\frac{1}{N}\sum_{n=1}^N f(T^nx)=\tilde{f}(x)
$$
where $\tilde{f}=\E(f|\mathcal{I}_T)$ and
$$
\mathcal{I}_T\=\{A\in \X\colon \mu(T^{-1}A\triangle A)=0\}.
$$

\subsubsection*{Gowers-Host-Kra uniformity seminorms.}\label{SS:GHK}
Following \cite{HK05a}, where a similar definition  was given for
ergodic systems, given a    system $(X,\X,\mu,T)$ and a function
$f\in L^\infty(\mu)$, we define inductively
\begin{gather}
 \notag \nnorm{f}_{1,T}\=\norm{\E(f|\mathcal{I}_T)}_{L^2(\mu)} ;  \\
\label{E:recur} \nnorm f_{k+1,T}^{2^{k+1}}
\=\lim_{N\to\infty}\frac{1}{N}\sum_{n=1}^{N} \nnorm{\bar{f}\cdot
T^nf}_{k,T}^{2^{k}}.
\end{gather}
 That all limits exist and
$\nnorm{\cdot}_{k,T}$ is a seminorm can be proved as in
\cite{HK05a}. Furthermore, the limit in \eqref{E:recur} remains
unchanged if replaced
 with the
uniform limit $\lim_{N-M\to\infty}\frac{1}{N-M}\sum_{n=M}^{N-1}$.
Using the ergodic theorem one gets
$\nnorm{f}_{1,T}^2=\lim_{N\to\infty} \frac{1}{N}\sum_{n=1}^{N} \int
\bar{f}\cdot T^nf \ d\mu $, and more generally, that
\begin{equation}\label{E:semi1}
\nnorm{f}_{k}^{2^k}= \lim_{N\to\infty} \frac{1}{N}\sum_{n_{k}=1}^N
\cdots \lim_{N\to\infty} \frac{1}{N}\sum_{n_1=1}^N \int \prod_{\e\in
\{0,1\}^{k}} \mathcal{C}^{|{\e}|}T^{{\e}\cdot \n}f \ d\mu,
\end{equation}
where  $\n=(n_1,\ldots,n_{k})$ and  for $\e\in \{0,1\}^{k}$ we let
$$
\n\cdot \e\=n_1\epsilon_1+\cdots +n_{k}\epsilon_{k}, \ \
|\e|=\epsilon_1+\cdots + \epsilon_{k},
$$
and for $z\in \C$ and $k$ nonnegative integer we let
$$
\cC^{k}z\= \begin{cases} z  &\text{ if } k \text{ is even} \\
 \bar{z} &\text{ if } k \text{ is odd.} \end{cases}
$$
It follows from Theorem 13.1 in \cite{HK05a} that in \eqref{E:semi1}
the  iterative limit
can be replaced with the limit $\lim_{N\to\infty}
\frac{1}{N^k}\sum_{1\leq n_1,\ldots, n_k\leq N}$.
 Using \eqref{E:semi1} and the ergodic theorem
 one can check  that
\begin{equation}\label{E:seminonergodic}
 \nnorm{f\otimes\overline{f}}_{k,T\times T}\leq  \nnorm{f}_{k+1,T}^2
\end{equation}
holds for every $k\in \N$. We also remark that $\nnorm{f}_{k,T}\leq
\nnorm{f}_{k+1,T}$ holds for every $k\in \N$.

\subsection{Dual functions, dual sequences, and  weak decomposition}
\subsubsection{Dual functions}
 Let $(X,\X,\mu,T)$ be a system,  $f\in L^\infty(\mu)$, and
 $M\in\N$. We define
$$
A_M(f)\=\frac{1}{M^k}
\sum_{\m\in [1,M]^k} \prod_{\substack{\e\in \{0,1\}^k,\\
\epsilon \neq 00\cdots 0}}
 \cC^{|\e|}T^{\m\cdot \e}f.
$$
 It is shown in \cite{HK05a} that the averages $A_M(f)$ converge
in $L^2(\mu)$  and in \cite{As10} that they converge pointwise. We
define
$$
  \cD_{k,T} f\=\lim_{M\to\infty}A_M(f)
$$
and call any such function a \emph{level $k$ dual function}.
 For instance, we have
$$
(\cD_{2,T}f)(x)=\lim_{M\to\infty}\frac{1}{M^2} \sum_{1\leq
m_1,m_2\leq N} T^{m_1}\bar{f}\cdot T^{m_2}\bar{f}\cdot T^{m_1+m_2}f.
$$

The importance of dual functions in the current article stems from
the following result (it follows from \eqref{E:semi1} and the fact
that the iterative limit can be replaced with a limit over cubes):
\begin{proposition}\label{P:dual}
Let $(X,\X,\mu,T)$ be a system.  Then for every $f\in L^\infty(\mu)$
and  $k\in\N$ we have
$$
\int f \cdot  \cD_{k,T} f \ d\mu =\nnorm{f}_{k,T}^{2^k}.
$$
\end{proposition}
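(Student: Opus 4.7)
The plan is to compute $\int f \cdot \cD_{k,T} f \, d\mu$ directly from the definition of $\cD_{k,T}$ and compare the resulting expression with formula \eqref{E:semi1} for $\nnorm{f}_{k,T}^{2^k}$. The identification hinges on two facts that are already available in the excerpt: convergence of $A_M(f)$ to $\cD_{k,T}f$ in $L^2(\mu)$, and the replacement of the iterative limit in \eqref{E:semi1} by a single limit over the cube $[1,N]^k$ (Theorem~13.1 of \cite{HK05a}).

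First, since $A_M(f)\to \cD_{k,T}f$ in $L^2(\mu)$ and $f\in L^\infty(\mu)\subset L^2(\mu)$, the inner product is continuous and
\[
\int f\cdot \cD_{k,T}f\, d\mu \;=\; \lim_{M\to\infty} \int f\cdot A_M(f)\, d\mu.
\]
Next, interpreting the external $f$ as the missing $\e=00\cdots 0$ factor (for which $|\e|=0$, so $\cC^{|\e|}z=z$, and $\m\cdot\e=0$, so $T^{\m\cdot\e}f=f$), the integral becomes
\[
\int f\cdot A_M(f)\, d\mu \;=\; \frac{1}{M^k}\sum_{\m\in[1,M]^k} \int \prod_{\e\in\{0,1\}^k} \cC^{|\e|}\, T^{\m\cdot\e}f \; d\mu .
\]
Letting $M\to\infty$ expresses $\int f\cdot \cD_{k,T}f\, d\mu$ as the cube-average limit
\[
\lim_{M\to\infty}\frac{1}{M^k}\sum_{\m\in[1,M]^k} \int \prod_{\e\in\{0,1\}^k} \cC^{|\e|}\, T^{\m\cdot\e}f \; d\mu .
\]

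To finish, I would invoke the sentence after \eqref{E:semi1}, which tells us that in the defining formula for $\nnorm{f}_{k,T}^{2^k}$ the iterated limit $\lim_N\frac{1}{N}\sum_{n_k}\cdots\lim_N\frac{1}{N}\sum_{n_1}$ may be replaced by the single cube limit $\lim_{N\to\infty}\frac{1}{N^k}\sum_{1\le n_1,\dots,n_k\le N}$. Matching the variable $\n=(n_1,\dots,n_k)$ with $\m$, the expression above is exactly $\nnorm{f}_{k,T}^{2^k}$, and the proposition follows.

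The only non-routine step is the passage from the iterated to the cube limit, but this is already cited from \cite{HK05a}; everything else is bookkeeping in the definition of $A_M(f)$ and an application of the Cauchy--Schwarz inequality to convert $L^2$ convergence into convergence of the integral against $f$. So I do not anticipate a substantive obstacle.
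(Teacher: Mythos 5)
Your argument is correct and is exactly the one the paper intends: the paper records the proposition as an immediate consequence of \eqref{E:semi1} and the replacement of the iterated limit by the cube limit (Theorem~13.1 of \cite{HK05a}), and you have simply filled in the bookkeeping, namely that the external factor $f$ is the $\e=00\cdots0$ term in the definition of $A_M(f)$ and that $L^2$-convergence of $A_M(f)$ to $\cD_{k,T}f$ passes to the integral against the bounded function $f$.
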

As a consequence,   $\nnorm{f}_{k,T}\neq 0$ if and only if  $f$
positively  correlates with  some dual function of level $k$.
\subsubsection{Dual sequences}
 A\emph{dual sequence of level $k$} is a sequence
$(\cD(n))$ of the form
$$
\cD(n)\=\lim_{M\to\infty} \frac{1}{M^k}
\sum_{\m\in [1,M]^k } \prod_{\substack{\e \in \{0,1\}^k,\\
\epsilon \neq 00\cdots 0}}
 \cC^{|\e|} d(n+\m\cdot \e),
$$
where $(d(n))$ is a bounded sequence such that  the above limit
exists for every $n\in \N$.

For future use, we record  the identity
\begin{equation}\label{E:alternate}
\cD(n)=\lim_{M\to\infty} \frac{1}{M^k}
\sum_{\m\in [1,M]^k} \prod_{\substack{\e \in \{0,1\}^k,\\
\epsilon \neq 00\cdots 0}} \cC^{|\e|} \d_{\e}(\m +n\ \!\tilde{\e})
\end{equation}
where $\tilde{\e}$ is any vector in $\{0,1\}^k$ such that $\e\cdot
\tilde{\e}=1$ and
$$
\d_{\e}(\m)=d(\e\cdot \m).
$$
For instance, if $(\cD(n))$ is a dual sequence of level $2$, then
\begin{align*}
\cD(n)=&\lim_{M\to\infty}\frac{1}{M^2} \sum_{1\leq m_1,m_2\leq N}
\bar{d}(n+m_1)\cdot \bar{d}(n+m_2)\cdot d(n+m_1+m_2)\\
=&\lim_{M\to\infty}\frac{1}{M^2} \sum_{1\leq m_1,m_2\leq N}
\bar{\d}_1(m_1+n,m_2) \cdot \bar{\d}_2(m_1,m_2+n) \cdot
\d_3(m_1+n,m_2),
\end{align*}
where
$$
\d_1(m_1,m_2)\=d(m_1), \quad  \d_1(m_1,m_2)\=d(m_2), \quad
\d_3(m_1,m_2)\=d(m_1+m_2).
$$
\subsubsection{Weak decomposition}
For the purpose of this article the significance of the collection
of dual sequences stems from the following decomposition result:
\begin{proposition} [Weak decomposition]\label{P:ApprDual}
Let $(X,\X,\mu,T)$ be a system, $f\in L^\infty(\mu)$,  and $k\in
\N$. Then for every $\varepsilon>0$, there exist functions $f_s,
f_u, f_e\in
L^\infty(\mu)$, 
 such that
\begin{enumerate}
\item $f=f_s+f_u+f_e$;
\item $\nnorm{f_u}_{k}=0$; \  $\norm{f_e}_{L^1(\mu)}\leq \varepsilon$; \ and
\item $f_s=\sum_{i=1}^m c_i\ \! f_{s,i}$, where  $c_i\in \R$,   $f_{s,i}\in L^\infty(\mu)$,
 and for  almost
every $x\in X$ the sequence $(f_{s,i} (T^nx))_{n\in\N}$ is a  dual
sequence of level   $k$.
\end{enumerate}
\end{proposition}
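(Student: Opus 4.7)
The plan is to realize the decomposition as an (approximate) orthogonal splitting in $L^2(\mu)$ driven by the kernel of the seminorm $\nnorm{\cdot}_{k,T}$. Let $V\subset L^2(\mu)$ be the closed linear span of all level $k$ dual functions $\cD_{k,T}g$ with $g\in L^\infty(\mu)$; each such generator already lies in $L^\infty(\mu)$ with $\|\cD_{k,T}g\|_{L^\infty(\mu)}\leq \|g\|_{L^\infty(\mu)}^{2^k-1}$, since the defining averages $A_M(g)$ satisfy this bound pointwise. The central claim is the identification
\[
V^{\perp}=\{h\in L^2(\mu)\colon \nnorm{h}_{k,T}=0\},
\]
together with the (equivalent) fact that $V=L^2(\mathcal{Y})$ for a $T$-invariant sub-$\sigma$-algebra $\mathcal{Y}\subset\mathcal{X}$ (essentially the Host-Kra factor $\cZ_{k-1}$ of \cite{HK05a}). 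One inclusion of the displayed identity is immediate from Proposition~\ref{P:dual}: $h\in V^\perp$ forces $\nnorm{h}_{k,T}^{2^k}=\int h\cdot \cD_{k,T}h\,d\mu=0$. The reverse inclusion rests on the Gowers-Cauchy-Schwarz inequality
\[
\Bigl|\int h\cdot \cD_{k,T}g\,d\mu\Bigr|\leq \nnorm{h}_{k,T}\cdot\nnorm{g}_{k,T}^{2^k-1},
\]
obtained by $k$ successive applications of the ordinary Cauchy-Schwarz inequality to the expansion of the integral as a multiple average over $2^k$-dimensional cubes, exactly as in Section~3 of \cite{HK05a}.

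\textbf{Building the three pieces.} Given $f\in L^\infty(\mu)$ and $\varepsilon>0$, set $f_u\=f-\E(f|\mathcal{Y})$. Conditional expectation is an $L^\infty$-contraction, so $f_u\in L^\infty(\mu)$, and $f_u\in V^\perp$ gives $\nnorm{f_u}_{k,T}=0$. By density of finite linear combinations of dual functions in $V=L^2(\mathcal{Y})$, choose $m\in\N$, $c_i\in\R$ and $g_i\in L^\infty(\mu)$ with $\bigl\|\E(f|\mathcal{Y})-\sum_{i=1}^m c_i\cD_{k,T}g_i\bigr\|_{L^2(\mu)}\leq\varepsilon$, and set $f_s\=\sum_i c_i\cD_{k,T}g_i$ and $f_e\=\E(f|\mathcal{Y})-f_s$. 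Then $f=f_s+f_u+f_e$ with all three pieces in $L^\infty(\mu)$, and $\|f_e\|_{L^1(\mu)}\leq\|f_e\|_{L^2(\mu)}\leq\varepsilon$ via Cauchy-Schwarz in a probability space. To verify condition (3) for $f_{s,i}\=\cD_{k,T}g_i$, I substitute $y\mapsto T^nx$ in the pointwise-a.e.\ defining limit of $\cD_{k,T}g_i$---legitimate for a.e.\ $x$ by the pointwise convergence theorem of \cite{As10}---to exhibit $(f_{s,i}(T^nx))_{n\in\N}$ as precisely the dual sequence of level $k$ generated by $d(n)\=g_i(T^nx)$.

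\textbf{Main obstacle.} The one truly delicate input is establishing that $V$ has the form $L^2(\mathcal{Y})$, so that $P_V$ is an $L^\infty$-bounded conditional expectation; this requires showing that the kernel of $\nnorm{\cdot}_{k,T}$ is closed under multiplication by bounded $V$-measurable functions, which rests on the $T$-invariance and algebra properties of dual functions and is extracted from the Host-Kra machinery of \cite{HK05a}. Absent this factor structure one can instead approximate $f$ directly in $L^2$ by a finite linear combination of dual functions modulo $V^\perp$, then truncate at level $\|f\|_{L^\infty(\mu)}$ and absorb the truncation error into $f_e$, at the cost of a slightly more involved $L^2$ bookkeeping.
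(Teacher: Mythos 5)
Your decomposition mirrors the paper's at the structural level (Host--Kra factor, orthogonal splitting, approximation by finite linear combinations of dual functions, and the pointwise convergence input from \cite{As10} for condition (3)), but the central step is asserted in a stronger form than you actually justify, and it is also stronger than what is needed.

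You claim $V=L^2(\mathcal{Y})$ and deduce the decomposition from $L^2$-density of dual functions in $L^2(\mathcal{Y})$. But your two supporting arguments --- the identity $\nnorm{h}_{k,T}^{2^k}=\int h\cdot\cD_{k,T}h\,d\mu$ from Proposition~\ref{P:dual}, and the Gowers--Cauchy--Schwarz bound --- are only defined and only valid for $h\in L^\infty(\mu)$, since the seminorm $\nnorm{\cdot}_{k,T}$ and the dual functions $\cD_{k,T}h$ are defined on $L^\infty(\mu)$. What you actually establish is the identification $V^\perp\cap L^\infty(\mu)=\{h\in L^\infty(\mu)\colon \nnorm{h}_{k,T}=0\}$, and this alone does not give $V=L^2(\mathcal{Y})$: a nonzero $h\in L^2(\mathcal{Y})$ orthogonal to all dual functions need not be bounded, and the obvious truncation argument does not close the gap (the $L^\infty$-bound of $\cD_{k,T}h_M$ grows like $M^{2^k-1}$, faster than $\|h_M-h\|_{L^2}\to 0$ can compensate). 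Your fallback paragraph has the same problem: ``approximate $f$ directly in $L^2$ modulo $V^\perp$'' leaves a residual component in $V^\perp$ that you cannot guarantee to be bounded, so condition $\nnorm{f_u}_k=0$ is out of reach without already knowing $f_u\in L^\infty$.

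The paper's argument avoids all of this by invoking only $L^1$-density of linear combinations of dual functions in $L^1(\cZ_{k-1},\mu)$, which it proves by $L^1$--$L^\infty$ duality: if $\tilde f\in L^\infty(\cZ_{k-1},\mu)$ annihilates every $\cD_{k,T}g$, take $g=\tilde f$, get $\nnorm{\tilde f}_k=0$ from Proposition~\ref{P:dual}, and conclude $\tilde f=0$ from the factor characterization \eqref{E:DefZ_l}. Because the dual space of $L^1$ is $L^\infty$, one only ever has to test against bounded functions, exactly where the seminorm machinery lives. With $L^1$-approximation of $g=\E(f|\cZ_{k-1})$ by $f_s$, the error $f_e=g-f_s$ is automatically bounded (both $g$ and the finite linear combination of bounded dual functions are bounded) and $\|f_e\|_{L^1(\mu)}\leq\varepsilon$ holds directly; your use of Cauchy--Schwarz to downgrade $L^2$ to $L^1$ is then unnecessary. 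So the fix is small: set $f_u=f-\E(f|\cZ_{k-1})$ as you did, but replace the $L^2$-density claim with the $L^1$-density-by-duality argument, and everything else you wrote goes through.
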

\begin{proof}
Let $\varepsilon>0$, $k\in \N$,  and $f\in L^\infty(\mu)$.  We
construct an  invariant sub-$\sigma$-algebra $\CZ_{k-1}$ of $\X$
exactly as in Section~4 of \cite{HK05a}. It satisfies the same
property as in Lemma~4.3 of \cite{HK05a}, namely,
\begin{equation}\label{E:DefZ_l}
 \text{\em for } f\in L^\infty(\mu),\
\E(f|\cZ_{k-1})=0\text{ \em if and
  only if }\  \nnorm f_{k} = 0.
\end{equation}

We can decompose $f$ as  $f=f_u+g$ where $g=\E(f|\mathcal{Z}_{k-1})$
and $f_u \bot\ \! L^\infty(\mathcal{Z}_{k-1}, \mu)$.
It follows from \eqref{E:DefZ_l} that  $\nnorm{f_u}_{k}=0$. It is
clear that $f_u, g, \in L^\infty(\mu)$.

We  claim that linear combinations of dual functions of level $k$
are dense in $L^1(\cZ_{k-1}, \mu)$.  Indeed,  by duality, it
suffices to show that if $\tilde{f}\in L^\infty(\cZ_{k-1},\mu)$
satisfies $\int \tilde{f} \cdot \cD_{k,T}f\ d\mu=0$ for every $f\in
L^\infty(\mu)$, then $\tilde{f}=0$.
 Taking $f=\tilde{f}$ gives  $\int \tilde{f}\cdot \cD_{k,T}\tilde{f}\ d\mu=0$,  and
by Proposition~\ref{P:dual} we get $\nnorm{\tilde{f}}_k=0$. Since
$\tilde{f}\in L^\infty(\mathcal{Z}_{k-1},\mu)$, we deduce from
\eqref{E:DefZ_l}
 that $\tilde{f}=0$. This completes the proof of the claim.

Keeping in mind that  $g\in L^\infty(\cZ_{k-1},\mu)$, the claim
enables us to decompose $g$ as $g=f_s +f_e$, where $f_s$ is a finite
linear combination of dual functions of level $k$ and
$\norm{f_e}_{L^1(\mu)}\leq \varepsilon$. Since  the function $g$ and
all dual functions are bounded,   the function $f_e$ is
 bounded. The proof ends upon noticing that if $ h$ is a dual
function of level $k$, then for   almost every $x\in X$ the sequence
$( h(T^nx))_{n\in\N}$ is a dual sequence of level $k$.
\end{proof}

\subsection{Nilsystems, nilsequences, and strong decomposition }
A \emph{nilmanifold}  is a homogeneous space $X=G/\Gamma$ where  $G$
is a nilpotent Lie group, and $\Gamma$ is a discrete cocompact
subgroup of $G$. If $G_{k+1}=\{e\}$ , where $G_k$ denotes the $k$-th
commutator subgroup of $G$, we say that $X$ is a
 $k$-\emph{step nilmanifold}.

 A $k$-step nilpotent Lie group $G$ acts on $G/\gG$ by left
translation where the translation by a fixed element $a\in G$ is
given by $T_{a}(g\gG) = (ag) \gG$.  By $m_X$ we denote the unique
probability measure on $X$ that is invariant under the action of $G$
by left translations (called the {\it normalized Haar measure}), and
by $\G/\gG$ we denote the Borel $\sigma$-algebra of $G/\gG$. Fixing
an element $a\in G$, we call the system $(G/\gG, \G/\gG, m_X,
T_{a})$ a {\it $k$-step  nilsystem}. The reader can find more
material about nilmanifolds in \cite{Lei05a} and the references
therein.

If $X=G/\Gamma$ is a $k$-step nilmanifold,  $a\in G$,  $x\in X$, and
$f\in C(X)$,
 we call the sequence $(f(a^nx))_{n\in\N}$ a \emph{basic $k$-step nilsequence}.
A \emph{$k$-step nilsequence}, is a uniform limit of \emph{basic
$k$-step nilsequences}.

\subsubsection{Strong decomposition}
  The next decomposition
result will be crucial for our study. For ergodic systems it is a
direct consequence of  a deep  structure theorem in \cite{HK05a};
the extension to the  non-ergodic case was treated in \cite{CFH11}
(see Proposition~3.1).
\begin{theorem}[Strong decomposition] \label{P:ApprNil}
Let $(X,\X,\mu,T)$ be a system, $f\in L^\infty(\mu)$,  and $k\in
\N$. Then for every $\varepsilon>0$, there exist  functions $f_s,
f_u, f_e\in L^\infty(\mu)$, with $L^\infty(\mu)$ norm at most
$2\norm{f}_{L^\infty(\mu)}$, such that
\begin{enumerate}
\item $f=f_s+f_u+f_e$;
\item $\nnorm{f_u}_{k+1}=0$; \  $\norm{f_e}_{L^2(\mu)}\leq \varepsilon$; \ and
\item for  almost every $x\in X$  the sequence $(f_s (T^nx))_{n\in\N}$
is a $k$-step nilsequence.
\end{enumerate}
\end{theorem}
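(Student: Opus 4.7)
The plan is to combine the Host-Kra structure theorem on ergodic components with the ergodic decomposition of $\mu$, and then truncate to enforce the uniform $L^\infty$ bound. Write $\mu = \int \mu_t \, d\lambda(t)$ as an ergodic decomposition. Following Section~4 of \cite{HK05a}, first construct the invariant sub-$\sigma$-algebra $\cZ_k$ of $\X$ that characterizes the vanishing of $\nnorm{\cdot}_{k+1}$ via the analogue of \eqref{E:DefZ_l} at the next level, and decompose
\[
f = g + f_u, \qquad g = \E(f|\cZ_k),
\]
so that $\nnorm{f_u}_{k+1} = 0$ and both $|g|$ and $|f_u|$ are bounded by $2\norm{f}_{L^\infty(\mu)}$.

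Next, for each ergodic component $(X,\mu_t,T)$ the Host-Kra structure theorem realizes the $\cZ_k$-factor as an inverse limit of $k$-step nilfactors. I would select, for each $t$, a $k$-step nilfactor $\pi_t\colon (X,\mu_t,T)\to(Y_t,\nu_t,S_t)$, with $Y_t = G_t/\gG_t$, and a continuous function $\phi_t\in C(Y_t)$ satisfying $\norm{g - \phi_t\circ\pi_t}_{L^2(\mu_t)}\le \varepsilon/2$. Each orbit $n\mapsto \phi_t(S_t^n\pi_t(x))$ is then a basic $k$-step nilsequence. Assembling these into a single bounded measurable function $f_s'$ on $X$ via $f_s'(x) = \phi_t(\pi_t(x))$ for $x\in\mathrm{supp}(\mu_t)$ requires a measurable selection in $t$, which is precisely the content of Proposition~3.1 of \cite{CFH11}. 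Integrating over $t$ yields $\norm{g-f_s'}_{L^2(\mu)}\le \varepsilon/2$.

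To obtain the uniform $L^\infty$-bound, I would truncate: set $f_s = \psi(f_s')$, where $\psi(z) = z$ when $|z|\le 2\norm{f}_{L^\infty(\mu)}$ and $\psi(z) = 2\norm{f}_{L^\infty(\mu)}\cdot z/|z|$ otherwise. Since $|g|\le \norm{f}_{L^\infty(\mu)}$ and $\psi$ is $1$-Lipschitz and fixes the disc of radius $\norm{f}_{L^\infty(\mu)}$, pointwise $|g - f_s|\le |g - f_s'|$, so the $L^2$-approximation is preserved. Because $\psi$ is continuous, applying it to a continuous function on $Y_t$ yields another continuous function on $Y_t$, so $(f_s(T^nx))_n$ remains a $k$-step nilsequence for almost every $x$. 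Putting $f_e = g - f_s$ gives the decomposition $f = f_s + f_u + f_e$ with all three summands bounded by $2\norm{f}_{L^\infty(\mu)}$ and $\norm{f_e}_{L^2(\mu)}\le\varepsilon$.

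The main obstacle is the measurable patching: one must coherently choose the family $\{(Y_t, S_t, \phi_t)\}_t$ measurably in $t$ and commit to a single finite step of each inverse limit uniformly enough in $t$ so as to achieve an $\varepsilon/2$ error across almost every ergodic component simultaneously. Once this selection is secured, the seminorm identity $\nnorm{f_u}_{k+1} = 0$, the preservation of the nilsequence property under truncation, and the $L^2$-bound on $f_e$ are all routine.
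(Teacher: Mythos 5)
The paper gives no proof here: it simply attributes the result to Proposition~3.1 of \cite{CFH11}, which handles precisely the non-ergodic patching you identify as the crux. Your sketch reconstructs that argument in a reasonable way—decompose $f$ along $\cZ_k$, approximate $\E(f|\cZ_k)$ by nilsequences component-by-component using Host--Kra, measurably select, truncate—so it is essentially the same route, just with the cited step expanded.

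One slip in the truncation: you cut $f_s'$ down to the disc of radius $2\norm{f}_{L^\infty(\mu)}$, which bounds $f_s$ by $2\norm{f}_{L^\infty(\mu)}$, but then
\[
\norm{f_e}_{L^\infty(\mu)}=\norm{g-f_s}_{L^\infty(\mu)}\le \norm{g}_{L^\infty(\mu)}+\norm{f_s}_{L^\infty(\mu)}\le 3\norm{f}_{L^\infty(\mu)},
\]
which exceeds the required bound $2\norm{f}_{L^\infty(\mu)}$. The fix is immediate: truncate instead to the disc of radius $\norm{f}_{L^\infty(\mu)}$. Since $g$ already takes values in that disc, your $1$-Lipschitz argument goes through unchanged (the radial projection onto a convex set is still $1$-Lipschitz and still fixes $g$), and now $\norm{f_s}_{L^\infty(\mu)}\le\norm{f}_{L^\infty(\mu)}$ and $\norm{f_e}_{L^\infty(\mu)}\le 2\norm{f}_{L^\infty(\mu)}$, as required. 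With that adjustment the argument is correct and matches the approach the paper delegates to \cite{CFH11}.
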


\subsection{The van der Corput Lemma}\label{SS:VDC}
 A key tool in proving uniformity estimates  is  the following variant of van
der Corput's
 fundamental estimate (proved as in  Lemma~3.1 in  \cite{KN74}):
\begin{lemma} \label{L:VDC2} Let $N\in \N$ and  $v_1,\ldots, v_N$ be vectors in an inner product space.
Then for every integer  $H$ between $1$ and $N$ we have ($\Re(z)$
denotes the real part of a complex number $z$)
$$
\norm{\frac{1}{N}\sum_{n=1}^{N} v_n}^2\leq \frac{2}{H}\sum_{h=1}^{H}
\big(1-\frac{h}{H}\big) \Re\Big(\frac{1}{N}\sum_{n=1}^{N} < v_{n+h},
v_n>\Big) + \frac{2}{H} + \frac{4H}{N}  .
$$
\end{lemma}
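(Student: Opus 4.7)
The plan is to follow the classical smoothing-and-square strategy that yields van der Corput's inequality. First I would extend the sequence by $v_n = 0$ for $n \notin [1,N]$ (the constants in the statement implicitly assume $\|v_n\| \leq 1$, which is the standard normalization in this lemma), and introduce the shifted averages
$$u_n = \frac{1}{H}\sum_{h=1}^{H} v_{n+h}, \qquad n = 0, 1, \ldots, N-1.$$
A simple double-counting of the pairs $(n,h) \in [0,N-1]\times[1,H]$ by the value $m = n+h$ shows that each $v_m$ with $m \in [H, N]$ appears in $\sum_n u_n$ with weight $1$, while the values $m \in [1,H-1]$ and $m \in [N+1,N+H-1]$ appear with smaller weights. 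Consequently $\sum_{n=0}^{N-1} u_n - \sum_{n=1}^{N} v_n$ is a boundary contribution of norm at most $H$, which after rescaling converts the target $\|N^{-1}\sum v_n\|^2$ into $\|N^{-1}\sum u_n\|^2$ up to an additive error of order $H/N$; this accounts for part of the $4H/N$ term.

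Next I would apply Cauchy--Schwarz in the form
$$\Bigl\|\sum_{n=0}^{N-1}u_n\Bigr\|^2 \leq N\sum_{n=0}^{N-1}\|u_n\|^2,$$
and expand
$$\sum_{n=0}^{N-1}\|u_n\|^2 = \frac{1}{H^2}\sum_{h_1,h_2=1}^{H}\sum_{n=0}^{N-1}\langle v_{n+h_1}, v_{n+h_2}\rangle.$$
The diagonal $h_1 = h_2$ contributes at most $NH/H^2 = N/H$; after the Cauchy--Schwarz step and division by $N^2$ this gives the uniform term $1/H$ that the stated bound absorbs into $2/H$. For the off-diagonal terms I would set $k = h_1 - h_2$ and substitute $j = n + \min(h_1,h_2)$ to rewrite each inner sum as $\sum_{j=1}^{N}\langle v_{j+|k|}, v_j\rangle$ (or its conjugate, if $h_1 < h_2$) plus a boundary error of size $O(H)$ per pair. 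The number of ordered pairs $(h_1,h_2)$ with $|h_1 - h_2| = k$ is $2(H-k)$ for $k = 1, \ldots, H-1$, and pairing $(h_1,h_2)$ with $(h_2,h_1)$ produces the real part. This yields the main term
$$\frac{2}{H}\sum_{h=1}^{H}\Bigl(1-\frac{h}{H}\Bigr)\,\Re\!\left(\frac{1}{N}\sum_{n=1}^{N}\langle v_{n+h}, v_n\rangle\right).$$

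The only obstacle is careful bookkeeping of the boundary errors, which come from two places: the initial smoothing (an error of order $H/N$ from shifting from $\sum v_n$ to $\sum u_n$), and the re-alignment of each inner $\langle v_{n+h_1}, v_{n+h_2}\rangle$-sum to the canonical range $[1,N]$ in $j$ (a total error of order $H^3$ inside a quantity that gets divided by $NH^2$, i.e.\ $H/N$). Combined with the $1/H$ from the diagonal, these assemble into the stated additive constants $\frac{2}{H} + \frac{4H}{N}$; the precise numerical constants are obtained by explicit accounting, but none of the boundary terms grows faster than what the statement allows since $H \leq N$.
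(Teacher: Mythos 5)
Your proposal is correct and reconstructs the standard van der Corput argument; the paper itself offers no proof, only the citation to Lemma~3.1 of Kuipers--Niederreiter, and your outline matches the derivation one would carry out from that reference. Your observation that the statement tacitly assumes $\norm{v_n}\leq 1$ (and the convention $v_m=0$ for $m>N$) is correct and worth making explicit: without the normalization the additive error terms $\frac{2}{H}+\frac{4H}{N}$ could not absorb the diagonal contribution, and in the paper the lemma is only ever applied to sequences bounded by $1$. The one cosmetic difference from the usual Kuipers--Niederreiter presentation is that you compare $\sum_n u_n$ to $\sum_n v_n$ as an approximation before squaring, whereas the textbook argument writes $H\sum_{n=1}^N v_n$ exactly as a double sum over the zero-extended sequence and applies Cauchy--Schwarz to that identity, avoiding one bookkeeping step; both routes land on the same main term $\frac{2}{H}\sum_{h}(1-\tfrac{h}{H})\Re(\cdot)$ with the boundary errors comfortably inside $\frac{2}{H}+\frac{4H}{N}$ once $H\leq N$ is used.
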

We also use the following   qualitative variant:
\begin{lemma}\label{L:N-VDC}
Let  $(v_n)$ be a bounded  sequence of vectors in an inner product
space, and $(\Phi_N)$ be a F{\o}lner sequence of subsets of $\N$.
Then
$$
\limsup_{N\to\infty}
\norm{\frac{1}{|\Phi_N|}\sum_{n\in\Phi_N}v_n}^2\leq 4 \ \!
\limsup_{H\to\infty}\frac{1}{H}\sum_{ h=1}^H
\limsup_{N\to\infty}\Big|
\frac{1}{|\Phi_N|}\sum_{n\in\Phi_N}<v_{n+h},v_{n}>\Big|.
$$
\end{lemma}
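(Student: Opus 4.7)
The plan is to prove the Følner-set version of van der Corput's estimate by first smoothing the sequence over a window of length $H$, exploiting the Følner property to show the smoothing is asymptotically harmless, and then applying Cauchy--Schwarz and reindexing to reduce the double sum that appears to diagonal correlations $\langle v_{n+j}, v_n \rangle$.

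Concretely, fix $H \in \N$ and set
\[
u_n \= \frac{1}{H}\sum_{h=0}^{H-1} v_{n+h}.
\]
First I would check that the Følner property gives
\[
\lim_{N\to\infty}\Big\|\frac{1}{|\Phi_N|}\sum_{n\in\Phi_N}v_n \ - \ \frac{1}{|\Phi_N|}\sum_{n\in\Phi_N}u_n\Big\| = 0,
\]
by writing the difference as $\frac{1}{H}\sum_{h=0}^{H-1}\bigl(\frac{1}{|\Phi_N|}\sum_{n\in\Phi_N}v_{n+h}-\frac{1}{|\Phi_N|}\sum_{n\in\Phi_N}v_n\bigr)$ and using that $|\Phi_N \triangle (\Phi_N - h)| / |\Phi_N| \to 0$ for each fixed $h$ together with boundedness of $(v_n)$. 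Consequently, the two $\limsup_N$ of the norms squared coincide, and it suffices to bound $\limsup_N \|\frac{1}{|\Phi_N|}\sum_{n\in\Phi_N} u_n\|^2$.

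Next I would apply Cauchy--Schwarz (or Jensen) to get
\[
\Big\|\frac{1}{|\Phi_N|}\sum_{n\in\Phi_N}u_n\Big\|^2 \ \leq \ \frac{1}{|\Phi_N|}\sum_{n\in\Phi_N}\|u_n\|^2 \ = \ \frac{1}{H^2}\sum_{h,k=0}^{H-1}\frac{1}{|\Phi_N|}\sum_{n\in\Phi_N}\langle v_{n+h},v_{n+k}\rangle.
\]
The $H$ diagonal terms $h=k$ contribute at most $C/H$ where $C=\sup_n\|v_n\|^2$, which vanishes when we take $\limsup_{H\to\infty}$. For $h\neq k$, say $h<k$ (the other case is symmetric, producing the factor contributing to the constant $4$), I use the Følner property one more time to shift the index $n\mapsto n-h$ and conclude
\[
\limsup_{N\to\infty}\Big|\frac{1}{|\Phi_N|}\sum_{n\in\Phi_N}\langle v_{n+h},v_{n+k}\rangle\Big| \ = \ \limsup_{N\to\infty}\Big|\frac{1}{|\Phi_N|}\sum_{n\in\Phi_N}\langle v_{n+(k-h)},v_n\rangle\Big|.
\]
Counting pairs $(h,k)$ with $|h-k|=j$ yields at most $2(H-j) \leq 2H$ pairs, so after taking $\limsup_N$ the off-diagonal contribution is bounded by $\frac{4}{H}\sum_{j=1}^{H-1}\limsup_{N\to\infty}\bigl|\frac{1}{|\Phi_N|}\sum_{n\in\Phi_N}\langle v_{n+j},v_n\rangle\bigr|$.

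The main technical obstacle is the repeated use of the Følner property to justify shifts, first in passing from the average of $v_n$ to the average of $u_n$, and then in reindexing $\langle v_{n+h},v_{n+k}\rangle$ as $\langle v_{n+(k-h)},v_n\rangle$ inside a $\limsup_N$. Both steps require only that $|\Phi_N \triangle (\Phi_N - r)|/|\Phi_N| \to 0$ for each fixed integer $r$, so they go through uniformly in $H$; letting $H\to\infty$ after $N\to\infty$ then kills the $O(1/H)$ diagonal error and yields the desired bound with constant $4$.
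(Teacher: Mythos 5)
Your argument is correct and self-contained; the paper itself does not prove Lemma~\ref{L:N-VDC} (it is stated as a known ``qualitative variant'' of Lemma~\ref{L:VDC2}, whose quantitative form is attributed to \cite{KN74}), so there is no proof in the paper to compare against. Your smoothing approach — replacing $v_n$ by $u_n=\frac1H\sum_{h=0}^{H-1}v_{n+h}$, checking the replacement is harmless via the F{\o}lner property, then expanding $\|u_n\|^2$ and reindexing the cross terms — is the standard way to obtain van der Corput in this setting, and every F{\o}lner shift you invoke only needs $|\Phi_N\triangle(\Phi_N+r)|/|\Phi_N|\to 0$ for fixed $r$, exactly as you say.

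One small remark on constants: your pair count is $2(H-j)$ for $|h-k|=j$, so the off-diagonal contribution is $\frac{1}{H^2}\sum_{j=1}^{H-1}2(H-j)\,\limsup_N|\cdots|\le\frac{2}{H}\sum_{j=1}^{H-1}\limsup_N|\cdots|$, giving the sharper constant $2$, not $4$. Writing $4$ is harmless here since the lemma claims $4$, but the claimed ``symmetry produces the factor contributing to the constant $4$'' is a misstatement — the two cases $h<k$ and $h>k$ are already accounted for by the factor $2$ in $2(H-j)$, and there is no further doubling.
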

In most cases we apply this lemma for $\Phi_N=[1,N]$, $N\in \N$.
\section{Seminorm estimates for the highest degree iterate: Two transformations}\label{S:Char2}
 An important step towards establishing Theorem~\ref{T:MainConv} is  to obtain estimates that enable us to
 control the  $L^2(\mu)$ norm of
 the averages in \eqref{E:ProductForm} by the uniformity seminorms
of the individual functions.
  In this section and the next one, our goal is  to do this  for
  the function that is associated with the
 fastest growing iterate. In subsequent sections we utilize this
 information in order to get similar estimates
 for  the other functions.

Since the
 proof is notationally heavy, we choose to first present it in detail for
  the case of two commuting transformations. The argument that covers the general
 case  is very similar and we  sketch its proof in the next
 section.

The main goal in this section is to establish the following result:
\begin{proposition}\label{P:CharB2special}
Let $(X,\X,\mu,T_1 ,T_2)$ be a system and $f_1,f_2\in L^\infty(\mu)$
be functions. Let $\H$ be a Hardy field, $a_1,a_2\in \G\cap \H$ be
functions that satisfy $a_1\succ a_2$, and let $d\=\deg(a_1)$ (all
notions are defined in Section~\ref{subsec:families}). Then there
exists $k=k(d)$ such that: If $\nnorm{f_1}_{k,T_1}=0$, then the
averages
$$
\frac{1}{N}\sum_{n=1}^{N}  T_1^{[a_1(n)]}f_1\cdot T_2^{[a_2(n)]}f_2
$$
converge to $0$ in $L^2(\mu)$.
\end{proposition}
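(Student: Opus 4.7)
The plan is a PET-style induction on $d = \deg(a_1)$, in the spirit of the polynomial exhaustion technique of \cite{Be87a} and its refinement in \cite{CFH11}, based on iterated applications of van der Corput's Lemma adapted to the Hardy-field setup. Each application of van der Corput will decrease the ``degree'' of the fastest-growing iterate by one; after finitely many steps (roughly $d+1$) we reach a base case in which the $T_1$-iterate is sub-linear, and the $L^2(\mu)$-norm of the averages can then be bounded directly by a Gowers-Host-Kra seminorm of $f_1$.

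Concretely, setting $v_n = T_1^{[a_1(n)]}f_1\cdot T_2^{[a_2(n)]}f_2$, Lemma~\ref{L:N-VDC} reduces the claim to bounding, for most $h\in\N$, the averages $\frac{1}{N}\sum_n \langle v_{n+h},v_n\rangle$. Using commutativity of $T_1, T_2$ and measure preservation, the inner product $\langle v_{n+h},v_n\rangle$ becomes an integral whose only $n$-dependence sits in the iterates $T_1^{[a_1(n+h)]-[a_1(n)]}$ and $T_2^{[a_2(n+h)]-[a_2(n)]}$, i.e., transformations indexed by the differenced Hardy-field functions $S_h a_1 - a_1$ and $S_h a_2 - a_2$. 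By Lemma~\ref{L:properties}, when $\deg(a_1)=d\geq 1$ one has $S_h a_1 - a_1\sim h\, a_1(t)/t$, which is a $\G\cap\H$ function of degree $d-1$, and similarly for $a_2$; if $\deg(a_2)=0$ then eventually $S_h a_2 - a_2 \to 0$ and the corresponding $T_2$-iterate becomes trivial after one more step. The growth assumption $a(t)/(t^d\log t)\to\infty$ in the definition of $\G$ is precisely what ensures that the differenced functions stay separated from polynomials and remain admissible at the next induction step.

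To organize the bookkeeping I would pass to the product system $(X\times X,\mu\times\mu,T_1\times T_1,T_2\times T_2)$, replacing $f_i$ by $F_i=f_i\otimes\overline{f_i}$, so that each van der Corput step produces a structurally similar problem on the doubled system with a new pair of iterates in $\G\cap\H$ of strictly lower complexity. After $d+1$ iterations the $T_1$-iterate has sub-linear growth, which is the setup treated in \cite{Fr10} (no commutativity of the transformations is needed there) and yields control of the $L^2$-norm by a suitable power of $\nnorm{f_1}_{k,T_1}$. Unwinding the tower of van der Corput inequalities and using \eqref{E:seminonergodic} at each level produces the desired $k = k(d)$, so the hypothesis $\nnorm{f_1}_{k,T_1}=0$ forces the averages to converge to $0$ in $L^2(\mu)$.

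The main obstacle is the bookkeeping of the family of iterates that proliferates after each van der Corput step: besides the differenced copies of $a_1, a_2$, iterated cross-terms arise and must all be shown to remain in $\G\cap\H$ and to preserve the dominance $a_1 \succ a_2$, so that the factor involving $f_2$ plays a passive role throughout the induction. Translation invariance of $\H$ legitimizes the shifts, and the growth gap built into $\G$ forces the PET weight vector (ordering this family by maximal degree and multiplicity) to decrease in a well-founded order, guaranteeing termination. Identifying the correct weight vector and verifying this ``families remain in $\G\cap\H$ with dominance preserved'' invariant across the induction is the main technical burden.
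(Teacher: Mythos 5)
Your high-level strategy — a PET-style induction driven by van der Corput, passing to the product system $T\times T$, controlling the end state by a Gowers--Host--Kra seminorm — is indeed the approach the paper takes (it deduces the statement from the more general Proposition~\ref{P:CharB2}, proved by such an induction). But there are two concrete errors in your description of the mechanism, and they are not minor bookkeeping issues: if you follow your plan literally it does not close.

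First, the claim that ``the inner product $\langle v_{n+h},v_n\rangle$ becomes an integral whose only $n$-dependence sits in the iterates $T_1^{[a_1(n+h)]-[a_1(n)]}$ and $T_2^{[a_2(n+h)]-[a_2(n)]}$'' is false. With two factors you cannot isolate both differences simultaneously: after composing the integral with a single shift $T_1^{-[\tilde a(n)]}T_2^{-[\tilde b(n)]}$ (which is what the paper does, with $(\tilde a,\tilde b)$ chosen carefully by Lemma~\ref{L:reduceA}) you get a family of iterates of the form $T_1^{[a_i(n+h)-\tilde a(n)]}T_2^{[b_i(n+h)-\tilde b(n)]}$ and $T_1^{[a_i(n)-\tilde a(n)]}T_2^{[b_i(n)-\tilde b(n)]}$, so cross-differences such as $a_1-a_2$ or $a_1-\tilde b$ inevitably appear and must be tracked. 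This is precisely why the paper introduces ``families of pairs'' and the associated notion of type: the state after one van der Corput step is a larger family, not a lower-degree copy of the original.

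Second, and as a consequence, the assertion that each van der Corput step lowers $\deg(a_1)$ by one (so that $\approx d+1$ steps suffice) is wrong. The paper's own worked example (Section~\ref{SS:Example1}) starts from $((t^{1.5},0),(0,t^{1.1}))$, so $d=1$, yet needs three van der Corput steps before all iterates become sub-linear; the first two steps leave $\deg(a_1)$ unchanged at $1$ and instead reduce the $T_2$-component of the type. The invariant that actually decreases is the lexicographically ordered matrix type of Section~\ref{subsec:families}, which records, for each transformation and each degree, how many inequivalent Hardy-field functions appear, together with which transformation they are attached to. This matrix type is strictly richer than ``degree and multiplicity,'' and choosing the right $(\tilde a,\tilde b)$ to guarantee it drops (sometimes from $\cB'$, sometimes from $\A'$, depending on the structure) is the heart of Lemma~\ref{L:reduceA}. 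Your proposal names this as ``the main technical burden'' but does not identify the invariant or the choice rule, and the rest of the plan is built on the incorrect premise that degree decreases monotonically, so the gap is genuine rather than cosmetic. The base-case strategy you sketch (sub-linear iterates, reduce to \cite{Fr10}-type control by a seminorm of $f_1$) is close in spirit to what the paper does via Lemmas~\ref{L:ChangeVar} and \ref{L:linear}, but you would only reach that base case with the correct induction in place.
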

Our method necessitates that we prove a more general result that we
 present next.
\begin{proposition}\label{P:CharB2}
Let $(X,\X,\mu,T_1,T_2)$ be a system and $f_1,\ldots, f_m\in
L^\infty(\mu)$ be functions.  Let   $(\A,\B)$  be a nice
 ordered  family of pairs of functions   with degree $d$
 (all notions are defined in Sections \ref{subsec:families} and \ref{subsec:nicefamilies}).
 Then there exists $k=k(d,m)\in \N$ such that: If      $\nnorm{f_1}_{k,T_1}=0$,
 then
\begin{equation}\label{E:fgh}
\lim_{N\to\infty}\sup_{E\subset \N}\norm{\frac{1}{N}\sum_{n=1}^{N}
\prod_{i=1}^m T_1^{[a_i(n)]} T_2^{[b_i(n)]}f_i \cdot {\bf
1}_{E}(n)}_{L^2(\mu)}=0.
\end{equation}
\end{proposition}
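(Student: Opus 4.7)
The plan is a PET-style induction on a complexity measure of the ordered family $(\A,\B)$, driven by repeated applications of the van der Corput lemma (Lemma~\ref{L:N-VDC}), along the lines of the scheme used in \cite{Be87a} and adapted to commuting transformations as in \cite{CFH11}.

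As a first step I would apply Lemma~\ref{L:N-VDC} with $v_n\={\bf 1}_E(n)\prod_{i=1}^m T_1^{[a_i(n)]}T_2^{[b_i(n)]}f_i$. The supremum over $E\subset \N$ is harmless here because the cross terms $\langle v_{n+h},v_n\rangle$ contain ${\bf 1}_E(n){\bf 1}_E(n+h)$, which we bound by $1$ inside the integrand, uniformly in $E$. Using that $\mu$ is $T_i$-invariant and that $T_1,T_2$ commute, I would apply $T_1^{-[a_1(n)]}T_2^{-[b_1(n)]}$ to each summand; up to an outer $\limsup_{H\to\infty}\frac{1}{H}\sum_{h\leq H}$, this reduces matters to an average of the same form as~\eqref{E:fgh} but for a new ordered family $(\A_h,\B_h)$ whose iterates are built from the $a_i,b_i$ and their $S_h$-translates via differences such as $[a_i(n+h)]-[a_1(n)]$ and $[a_i(n)]-[a_1(n)]$ (and likewise for the $b_i$), with one fewer distinguished function ($f_1$ appearing as the external factor).

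For the inductive step I would appeal to Lemma~\ref{L:properties}(ii): for $a\in\cG\cap\H$ of degree $d\geq 1$, the difference $S_h a - a$ is asymptotic to $h a/t$, hence has degree $d-1$, and (for all but an exceptional set of $h$ which the outer $\limsup_h$ averages away) the resulting linear combinations stay in $\cG\cap\H$. Defining the complexity of $(\A,\B)$ as an appropriately weighted multiset of the degrees of the $a_i$ and $b_i$ (in the style of \cite{Be87a}), the family $(\A_h,\B_h)$ produced by the VDC step has strictly smaller complexity than $(\A,\B)$. The niceness assumption on $(\A,\B)$ is precisely what ensures that $(\A_h,\B_h)$ remains nice after the move and, crucially, that the pivot can always be chosen so that the factor carrying $f_1$ collapses while the fastest-growing iterate stays attached to $T_1$ alone.

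Iterating, the induction terminates when the complexity reaches its minimal value. The base case is a single-transformation average of the form $\frac{1}{N}\sum_{n=1}^{N} T_1^{[c(n)]}g\cdot w(n)$ with $c\in\cG\cap\H$ and $|w(n)|\leq 1$; here one final application of Lemma~\ref{L:N-VDC} combined with the equidistribution results of \cite{Bos94} yields a bound by a power of $\nnorm{g}_{1,T_1}$, which upon unwinding the induction gives a bound by a power of $\nnorm{f_1}_{k,T_1}$ with $k=k(d,m)$ counting the total number of VDC steps performed along any branch. The main obstacle I anticipate is the bookkeeping: defining the complexity so that the VDC move is strictly decreasing and well-founded, verifying that niceness is preserved at each step, arranging that the pivot is always the factor carrying $f_1$ (so that the final seminorm bound involves $\nnorm{f_1}_{k,T_1}$ rather than a mixed one), and handling the exceptional values of $h$ for which shift differences leave $\cG\cap\H$ via the outer averaging.
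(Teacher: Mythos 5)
Your high-level framework — PET-style induction driven by van der Corput, a well-founded complexity measure, niceness preserved under the reduction — matches the paper's strategy, but three specific choices in your plan are incorrect and would make the induction fail.

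\textbf{The pivot is not always $(a_1,b_1)$, and $f_1$ must not collapse.} You propose to compose with $T_1^{-[a_1(n)]}T_2^{-[b_1(n)]}$ at every step and say the pivot should be ``the factor carrying $f_1$'' so that this factor collapses. This is wrong on both counts. The paper's Lemma~\ref{L:reduceA} shows the pivot must be chosen according to the structure of the family: when $\B'$ is non-empty (some $a_i$ bounded but $b_i$ not), one must take $\tilde a=0$ and $\tilde b\in\B'$ of minimal degree — pivoting on $(a_1,b_1)$ would leave the $T_2$-only iterates untouched and the type would not decrease. More importantly, the function $f_1$ (as $f_1\otimes\bar f_1$ after passing to the product system) is carried through the entire induction as the distinguished first factor, with iterate $(S_h a_1-\tilde a, S_h b_1-\tilde b)$; it never collapses. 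This is precisely why the final bound involves $\nnorm{f_1}_{k,T_1}$. If you collapse $f_1$ at the first step, you lose the function whose seminorm is supposed to control the average.

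\textbf{Degree alone is not enough for the complexity measure.} You note that $S_ha-a\sim a/t$ drops the degree by one, but this is only what happens when you difference a function against itself. When you form $S_h a_1-a_i$ for $a_i\ne a_1$, Lemma~\ref{L:KeyHardy} gives $S_h a_1-a_i\sim a_1$ if $a_1\ncong a_i$ (degree does \emph{not} drop) and $S_h a_1-a_i\ll a_1/t$ if $a_1\cong a_i$. A multiset of degrees cannot distinguish $\{t^{1.5},\,2t^{1.5}\}$ from $\{t^{1.5},\,t^{1.5}+t\}$, yet the first pair stays at degree one under the VDC move and the second drops. The paper's matrix type counts equivalence classes (in the sense $\deg(a-b)<\min(\deg a,\deg b)$) of each degree, separately for $\A'$ and $\B'$; this is the resolution you need for a well-founded decrease.

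\textbf{The base case is not a single weighted term.} Your base case ``$\frac{1}{N}\sum T_1^{[c(n)]}g\cdot w(n)$ with $|w|\leq 1$'' is wrong. The induction terminates when the type reaches degree $0$, i.e.\ every $a_i$ and $b_i$ has sub-linear growth, but the average can still involve all $m$ factors and both transformations. The paper handles this by the change-of-variable Lemma~\ref{L:ChangeVar} (replacing $[a_1(n)]$ by an integer variable), observing that the $T_2$-iterates become locally constant, and then applying Lemma~\ref{L:linear} to the resulting integer-linear average, giving a bound in terms of $\nnorm{f_1}_{2m+1,T_1}$. The equidistribution results of \cite{Bos94} play no role here. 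Finally, a minor point: you cannot simply discard the $\sup_E$ by bounding ${\bf 1}_E$ by $1$; the $\sup_E$ is carried as a technical device (via Lemma~\ref{L:ProdTrick'}) to absorb the $\{0,1\}$-valued error terms $[a(n+h)]-[\tilde a(n)]-[a(n+h)-\tilde a(n)]$ that arise at every VDC step, and these must be partitioned away before the inductive hypothesis can be applied.
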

Applying this result to the nice family $(\A,\B)$  defined by
$\A\=(a_1,0)$ and $\B\=(0,a_2)$,  one sees that
Proposition~\ref{P:CharB2special} follows from
Proposition~\ref{P:CharB2}.

\subsection{Families of pairs of functions  and their type}\label{subsec:families}

\subsubsection{Degree and equivalence}

\begin{definition}
If  $a\colon [c,\infty)\to \R$ is a function with polynomial growth
rate, and $k_0$ is the smallest non-negative integer $k$ such that
$a(t)\prec t^k$, we define $d\=k_0-1$ to be the  \emph{degree} of
the function, and  write $\deg(a)=d$.
\end{definition}
As a consequence, $\deg(a)=-1$ if and only if $a(t)\to 0$, and
$\deg(a)=d\geq 0$ if and only if $t^d\ll a(t)\prec t^{d+1}$. For
example,  $\deg(1/t)=-1$, $\deg(1)=\deg(\sqrt{t})=\deg(t/\log t)=0$,
$\deg(t)=\deg(t^{1.5})=1$.

We remind the reader that two functions  $a, b\colon [c,\infty)\to
\R$ have the same growth rate, in which case   we write $a\sim b$,
if $a(t)/b(t)$ converges to a  non-zero constant as $t\to\infty$. We
will make use of the following  stronger notion of growth
equivalence:
\begin{definition}
We say that two functions   $a,b\colon [c,\infty)\to \R$
   are \emph{equivalent},   and write $a\cong b$,  if they have polynomial growth rate
   and satisfy
$\deg(a-b)< \min\{\deg(a),\deg(b)\}$.
\end{definition}
 Notice that  if $a\cong b$, then $a(t)/b(t)\to 1$,
 but the converse is not true. For example  $t^{1.5} \ncong t^{1.5}+t^{1.1}$.

\subsubsection{Families of pairs of functions}
 Let $m\in\N$. Given two ordered families of functions
 $$
 \A\=(a_1,\ldots,a_m), \quad \B\=
 (b_1,\ldots,b_m),
 $$
where $a_i,b_i\colon [c,\infty)\to \R$ have polynomial growth rate,
we define the \emph{ordered family of pairs of functions} $(\A,\B)$
as follows
$$
(\A,\B)\=\big((a_1,b_1),\ldots,(a_m,b_m)\big).
$$
The reader is advised to think of this family of pairs as an
efficient way to record  the functions that appear in the iterates
\eqref{E:fgh}.

The maximum of the degrees of the functions in the families $\A$ and
$\B$ is called the \emph{degree of the family} $(\A,\B)$.

For convenience of exposition, if pairs of bounded functions  appear
in $(\A,\B)$ we remove them, and henceforth we assume:
\begin{itemize}
\item[] \em
All families $(\A,\B)$ that we consider do not contain pairs of
bounded functions.
\end{itemize}

\subsubsection{Definition of type}\label{SS:type}
 We fix a non-negative integer $d$ and
restrict ourselves to families $(\A,\B)$ with degree between $0$ and
$d$.

 Let
\begin{equation} \label{E:A'}
\A'\=\{a\in \A\colon  a \text{ is not bounded} \}.
\end{equation}
and
\begin{equation} \label{E:B'}
\B'\=\{b_i\in \B\colon  a_i \text{ is bounded} \}.
\end{equation}

For $i=0,1,\ldots, d$, let $w_{1,i}$, $w_{2,i}$ be the number of
distinct non-equivalent classes of polynomials of degree $i$ in
$\A'$ and  $\B'$ correspondingly  (if $\B'$ is empty, then
$w_{2,i}=0$ for $i=0,1,\ldots, d$).

We define the \emph{(matrix) type} of the family $(\A,\B)$ to be the
$2\times (d=1)$  matrix
$$
\begin{pmatrix}
w_{1,d}& \ldots &  w_{1,0}\\ w_{2,d}& \ldots& w_{2,0}
\end{pmatrix}.
$$
 For example, consider the family of pairs
$$
\big((t^{2.5}, t^{3.5}),\ (t^{2.5}+t^2,t), \ (t^{2.5}+t^{1.5}, 2t),\
(t^{0.5}, t) \ ((t+1)^{0.5}-t^{0.5},t^{1.5}), \ (0,t^{0.5})\big).
$$
Then $d=3$, $\A'=\{t^{2.5},t^{2.5}+t^2,t^{2.5}+t^{1.5}, t^{0.5}\}$,
and $\B'=\{t^{1.5}, t^{0.5}\}$. As a consequence, the family of
pairs $(\A,\B)$ has type
$$
\begin{pmatrix}
0& 2& 0& 1\\ 0& 0& 1 & 1
\end{pmatrix}.
$$
We order the set of all possible types lexicographically; we  start
by comparing the first element of the first row of each matrix, and
after going through all the elements of the first row,  we compare
the  elements of the second row of each matrix,  and so on. In other
words: given two $2\times (d+1)$ matrices $W\=(w_{i,j})$ and
$W'\=(w'_{i,j})$, we say that $W \succ W'$ if:  $w_{1,d}>w'_{1,d}$,
or $w_{1,d}=w'_{1,d}$ and $w_{1,d-1}>w'_{1,d-1}$, $\ldots$, or
$w_{1,i}=w'_{1,i}$ for $i=0,\ldots,d$ and $w_{2,d}>w'_{2,d}$, and so
on.

As an example we mention
\begin{multline*}
\begin{pmatrix}
 2& 2\\ 0& 0
\end{pmatrix}
\succ
\begin{pmatrix}
 2& 1\\ \star &\star
\end{pmatrix} \succ
\begin{pmatrix}
 2& 0\\ \star & \star
\end{pmatrix} \succ
\begin{pmatrix}
 1& \star\\ \star& \star
\end{pmatrix} \succ
\begin{pmatrix}
 0& \star\\ \star& \star
\end{pmatrix}
\succeq
\begin{pmatrix}
 0& 0\\ \star& \star
 \end{pmatrix}\succeq
 \begin{pmatrix}
 0& 0\\ 0& \star
\end{pmatrix}
\succeq
 \begin{pmatrix}
 0& 0\\ 0& 0
\end{pmatrix}
\end{multline*}
where in the place of the stars one can put any collection of
non-negative integers.

An important observation is that although for a given type $W$ there
is an infinite number of possible types $W'$ that are smaller than
$W$, we have
\begin{lemma}
\label{lem:decreasing} Every decreasing sequence of
 types of families of pairs   is eventually stationary.
\end{lemma}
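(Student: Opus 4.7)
The plan is to reduce the statement to the standard fact that the lexicographic order on $\N^k$ is a well-order. More precisely, I would identify a type $W = (w_{i,j})$ (a $2\times(d+1)$ matrix of non-negative integers) with the vector
$$
\phi(W) \= (w_{1,d},\, w_{1,d-1},\, \ldots,\, w_{1,0},\, w_{2,d},\, w_{2,d-1},\, \ldots,\, w_{2,0}) \in \N^{2(d+1)},
$$
obtained by flattening the matrix row by row. The ordering $\succ$ on types defined in Section~\ref{SS:type} is, by inspection, exactly the lexicographic order on $\N^{2(d+1)}$ transported through $\phi$ (one compares the leftmost coordinate first, then moves rightward, which is what the definition of $\succ$ spells out). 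Consequently, the lemma is equivalent to the assertion that every non-increasing sequence in $(\N^{2(d+1)}, \text{lex})$ is eventually stationary.

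I would then prove the auxiliary claim by induction on $k$: every non-increasing sequence in $(\N^k, \text{lex})$ is eventually constant. The base case $k=1$ is just the well-ordering of $\N$. For the inductive step, let $(x^{(n)})_{n\geq 1}$ be a non-increasing sequence in $\N^k$. By the very definition of the lex order, $x^{(n+1)} \preceq x^{(n)}$ forces $x_1^{(n+1)} \leq x_1^{(n)}$, so the first coordinates form a non-increasing sequence of non-negative integers and therefore stabilize at some value $c$ for all $n \geq N_0$. For $n \geq N_0$, having equal first coordinates means that the tails $(x_2^{(n)},\ldots,x_k^{(n)})$ form a non-increasing sequence in $\N^{k-1}$ under lex order, and by the inductive hypothesis this tail is eventually constant, completing the induction.

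There is no real obstacle here; the only point worth checking carefully is that the matrix ordering defined in Section~\ref{SS:type} really does coincide with lex order after flattening, which is immediate from the phrasing "we start by comparing the first element of the first row$\ldots$ after going through all the elements of the first row, we compare the elements of the second row$\ldots$". Since $d$ is fixed throughout Section~\ref{subsec:families}, the ambient dimension $2(d+1)$ is fixed and no subtlety about varying dimensions arises.
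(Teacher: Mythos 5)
The paper states this lemma without proof (it is presented as an "important observation" whose justification is left to the reader), so there is no proof in the paper to compare against. Your argument is correct and is precisely the natural way to supply the omitted justification: flatten the $2\times(d+1)$ type matrix into a vector in $\N^{2(d+1)}$, observe that the paper's ordering $\succ$ is exactly the lexicographic order under this identification, and then prove by induction on the (fixed, finite) dimension that any non-increasing sequence in $(\N^k,\mathrm{lex})$ stabilizes — the first coordinates are a non-increasing sequence in $\N$ and hence stabilize, after which the tails form a non-increasing sequence in $\N^{k-1}$ and the inductive hypothesis applies. The only hypotheses worth making explicit, which you do, are that $d$ is fixed (so the ambient dimension does not vary along the sequence) and that the entries $w_{i,j}$ are non-negative integers; both hold by construction in Section~\ref{SS:type}.
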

Therefore, if some operation reduces the type of a certain family of
pairs of functions, then after a finite number of repetitions it
will terminate.

\subsection{Nice families and the van der Corput operation}\label{subsec:nicefamilies}
 In this subsection we define a class of ``nice''  families of pairs of
 functions that will be instrumental for our subsequent discussion.
Furthermore, we define an operation that sends nice families to nice
families and reduces their type.
\subsubsection{Nice families}
We remind the reader of our  definition of the class of good
functions
$$
\cG=\{a\colon [c,\infty)\to \R \text{ such that } t^d \log{t} \prec
a(t)\prec t^{d+1} \text{ for some  integer } d\geq 0 \}.
$$

\begin{definition}
Given a function $a\colon [c,\infty)\to\R$, we  define $\cF(a)$ to
be the family of functions  that contains all integer combinations
of shifts of $a$, meaning,
$$
\cF(a)\=\Big\{\sum_{i=1}^l k_i \cdot S_{h_i}a, k_i\in \Z, h_i,l\in
\N\Big\}.
$$
\end{definition}
Using Lemma~\ref{L:properties}, one sees that if $a\in \G$ and $b\in
\F(a)$, then either $b(t)\to 0$ or $ b\in \G$.

 Henceforth, we are going to work with the following class  of pairs of
functions:
\begin{definition}\label{D:good} Let $\H$ be a Hardy field,
$a,b\in \cG\cap \H$,   $a_i\in \cF(a)$
  and  $b_i \in \cF(b) $  for $i=1,\ldots,m$,  and
 $\cA\=(a_1,\ldots,a_m)$, $\cB\=(b_1,\ldots,b_m)$.
We call the ordered  family  $(\cA,\cB)$ \emph{nice} if
\begin{enumerate}

\item\label{it:nice1}
  $a_1-a_i\succ 1 $ and $a_i \ll a_1 $  for $i=2,\ldots,m$;
\medskip

\item\label{it:nice2}
  $b_i\prec a_1$ for $i=1,\ldots,m$;
\medskip

\item \label{it:nice3}
   $ b_1-b_i \prec a_1-a_i $ for $i=2,\ldots,m$.
\end{enumerate}
\end{definition}
For example, if $\H$ is a Hardy field, and   $a,b\in \cG\cap \H$
satisfy $a\succ b$, then the ordered family of pairs $\big((a,0),
(0,b)\big)$ is nice. If in addition we assume that $\deg(b)\geq 1$,
then also the family $\big((a,-b), (S_ha,-b), (0,S_hb-b)\big)$ is
nice for every $h\in \N$. This is a special case of a more general
phenomenon that will be explained in Section~\ref{SS:Reducing}.

\subsubsection{The van der Corput operation}
\label{subsec:vdcoperation} Given an ordered family of   pairs of
functions $(\A,\B)$, a pair of functions $(a,b)$, and $h\in\N$,
 we define the following operation
  \begin{multline*}
(a,b,h)\vdc(\A,\B) \= \\ \big(
(S_ha_1-a,S_hb_1-b),\ldots,(S_ha_m-a,S_hb_m-b),
(a_1-a,b_1-b),\ldots,(a_m-a,b_m-b)\big)^\ast.
\end{multline*}
where  $^\ast$ is the operation that removes all pairs of bounded
functions.

\subsection{Strategy of proof of Proposition~\ref{P:CharB2}}\label{SS:Example1}
Our proof strategy  of Proposition~\ref{P:CharB2} is to successively
apply Lemma~\ref{L:N-VDC} in order to bound  the $L^2(\mu)$ norm of
the averages in question with the $L^2(\mu)$ norm of averages that
are simpler to deal with. In order to carry out this reduction a key
step is to show that given a nice
 family of pairs $(\A,\B)$ with $\deg (a_1)\geq 1$,  it is always  possible  to find
 $(\tilde{a},\tilde{b})\in (\A,\B)$ such that for all large
enough $h\in \N$ the
 operation $(\tilde{a},\tilde{b},h)\vdc$ leads to a  nice family of pairs
 that has smaller type.
Eventually, this procedure leads to families of pairs with
sub-linear growth (i.e. with degree $0$), in which case
Proposition~\ref{P:CharB2} can be established directly in a
relatively simple manner.

We explain how this reduction to the degree $0$  case works in the
next example:

\begin{example*}
\label{SSS:example2} Our goal is to find $k\in \N$ such that if
$\nnorm{f_1}_{k,T_1}=0$, then the averages
 \begin{equation}\label{E:1511}
\frac{1}{N}\sum_{n=1}^{N} f_1(T_1^{[n^{1.5}]}x)\cdot
f_2(T_2^{[n^{1.1}]}x)
 \end{equation}
 converge to $0$ in $L^2(\mu)$ as $N\to\infty$.

   We define $\A=(t^{1.5},0)$, $\B=(0,t^{1.1})$, and  introduce the following nice family of pairs
   of functions
 $$
 (\A,\B)=\big((t^{1.5},0),(0,t^{1.1})\big).
 $$
 This family is nice and has  type
 $\left(
\begin{smallmatrix}
1& 0\\ 1&  0
\end{smallmatrix}\right)$.
  Applying the vdC operation with    $(a,b)=(0,t^{1.1})$, we see that for
   $h\in\N$,  the  ordered family  $(a,b,h)\vdc(\A,\B)$ is equal to
 $$
 \big( \big((t+h)^{1.5},-t^{1.1}\big),\big(0,(t+h)^{1.1}-t^{1.1}\big), \big(t^{1.5},-t^{1.1}\big)\big).
 $$
The important point is that for every $h\in\N$  this new family  is
also nice and has smaller type, namely $\left(\begin{smallmatrix} 1&
0\\ 0& 1
\end{smallmatrix}\right)$.
Loosely speaking, one expects to be able to show (using
Lemma~\ref{L:N-VDC}) that the averages \eqref{E:1511} converge to
$0$ in $L^2(\mu)$ once one can show that for every $h\in \N$ the
averages
 $$
 \frac{1}{N}\sum_{n=1}^{N}
  f_1(T_1^{[(n+h)^{1.5}]}T_2^{[-n^{1.1}]}x)\cdot
\tilde{g}(T_2^{[(n+h)^{1.1}-n^{1.1}]}x)
 \cdot \tilde{h}(T_1^{[n^{1.5}]}T_2^{[-n^{1.1}]}x)
 $$
  converge to $0$
in $L^2(\mu)$
 for all $\tilde{g},\tilde{h}\in
L^\infty(\mu)$.

For $h\in \N$, applying the vdC operation one more time with
$(a,b)=(0,(t+h)^{1.1}-t^{1.1})$ leads to  a nice  ordered family
with $4$ pairs and type $\left(\begin{smallmatrix} 1& 0\\ 0& 0
\end{smallmatrix}\right)$. Lastly, for $h\in \N$, applying the vdC operation one more time with
$(a,b)=(t^{1.5},-(t+h)^{1.1})$, it is easy to see that we  get a
nice
ordered family  with $7$ pairs and  type $\left(\begin{smallmatrix} 0& 7\\
0& 0
\end{smallmatrix}\right)$. In this case all functions involved have
sub-linear growth, and the iterates of $T$ grow faster than any of
the iterates of $S$.  Taking advantage of this fact, we can show in
a relatively simple way that the corresponding multiple ergodic
averages converge to $0$ in $L^2(\mu)$ if $\nnorm{f}_{16,T_1}=0$.
\end{example*}

\subsection{Two technical lemmas} We establish two simple results
that will be used repeatedly.
\begin{lemma}\label{L:Key1}
Let   $a\colon [c,\infty)\to \R$ be a Hardy field function with
non-negative degree $d$ and let $b\in \cF(a)$. Then either $b(t)\to
0$, or there exists $k\in \{0,\ldots, d\}$ such that $b\sim a/t^k$.
\end{lemma}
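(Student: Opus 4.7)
The plan is to Taylor-expand each shift $a(t+h_i)$ around $t$ and read off the leading surviving term of the resulting linear combination. The preparatory step is to understand the growth of the successive derivatives of $a$: iterating Lemma~\ref{L:properties}, I would show that $a^{(j)}\in\H$ with $a^{(j)}\sim a/t^j$ for each $j\in\{0,1,\ldots,d\}$, while $a^{(d+1)}\ll a/t^{d+1}\prec 1$ and hence $a^{(d+1)}(t)\to 0$. The induction uses part~(ii) of Lemma~\ref{L:properties} while $a^{(j)}$ has degree $\geq 1$ (i.e., for $j\leq d-1$, when $a^{(j)}\succ t^\varepsilon$ for some $\varepsilon>0$), and falls back on part~(i) at the marginal case $j=d$, where $a^{(d)}$ has degree exactly $0$, giving $a^{(d+1)}\ll a^{(d)}/t\ll a/t^{d+1}$.

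Given this, for $b=\sum_{i=1}^{\ell}k_i\,S_{h_i}a$ I would invoke Taylor's theorem with Lagrange remainder of order $d+1$: for each $i$ and every large enough $t$ there exists $\xi_{i,t}\in[t,t+h_i]$ with
$$a(t+h_i)=\sum_{j=0}^{d}\frac{h_i^{\,j}}{j!}\,a^{(j)}(t)+\frac{h_i^{\,d+1}}{(d+1)!}\,a^{(d+1)}(\xi_{i,t}).$$
Since $a^{(d+1)}(s)\to 0$ as $s\to\infty$ and $\xi_{i,t}\geq t$, each remainder tends to zero, so summing over $i$ yields
$$b(t)=\sum_{j=0}^{d}c_j\,a^{(j)}(t)+o(1),\qquad c_j\mathrel{\mathop:}=\frac{1}{j!}\sum_{i=1}^{\ell}k_i h_i^{\,j}.$$

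To finish, set $k\mathrel{\mathop:}=\min\{j\in\{0,\ldots,d\}:c_j\neq 0\}$, with the convention that no such $k$ exists when every $c_j$ vanishes. In that degenerate case $b=o(1)\to 0$, the first alternative. Otherwise the term $c_k a^{(k)}\sim c_k\cdot a/t^k$ has degree $d-k\geq 0$, and therefore dominates both every higher-index contribution $c_j a^{(j)}$ (because $a/t^j\prec a/t^k$ for $j>k$) and the $o(1)$ remainder (because $a/t^k$ is bounded below away from zero), yielding $b\sim a/t^k$, the second alternative. I expect the only real work to be in the preparatory step, namely the transition from the range $j\leq d-1$ (where part~(ii) of Lemma~\ref{L:properties} applies directly) to the marginal value $j=d$ (where only part~(i) is available); once this is in hand, the Taylor expansion and dominant-term analysis are routine bookkeeping.
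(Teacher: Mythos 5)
Your proof is correct and follows essentially the same route as the paper: Taylor-expand each shift $S_{h_i}a$ with a Lagrange remainder of order $d+1$, use Lemma~\ref{L:properties} to conclude $a^{(d+1)}(t)\to 0$ so that $b=\sum_{j=0}^d c_j a^{(j)}+o(1)$, and then read off the smallest index with $c_j\neq 0$, with $a^{(k)}\sim a/t^k$ supplied by iterating Lemma~\ref{L:properties}. The paper's version is terser (it does not spell out the induction over derivatives and adds a preliminary ``WLOG $a(t)\to\infty$'' reduction), but the substance is identical.
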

\begin{proof}
Without loss of generality we can assume that $a(t)\to \infty$.
Suppose that
$$
b=\sum_{i=1}^l k_i \cdot S_{h_i}a.
$$
Since $\deg (a)=d$ we have by Lemma~\ref{L:properties} that
$a^{(d+1)}(t)\to 0$. Using this and Lagrange's remainder formula for
the Taylor series of the function $a(t)$, we see that for  $h\in \N$
we have
$$
S_ha=\sum_{i=0}^d a^{(i)}\  h^i/i!+ e_h
$$
where $e_h\colon [c,\infty)\to \R$ is a function that satisfies
$e_h(t)\to 0$. Combining the above identities we deduce that
$$
b= \sum_{i=0}^d c_i a^{(i)} +e
$$
for some constants $c_i\in \R$ and function $e\colon [c,\infty)\to
\R$ that satisfies $e(t)\to 0$. If $c_i=0$ for $i=0,\ldots, d$, then
$b(t)\to 0$. Otherwise, let $i_0$ be the smallest $i$ such that
$c_{i}\neq 0$. Then $b\sim a^{(i_0)}$, and by
Lemma~\ref{L:properties} we have $a^{(i_0)}\sim a/t^{i_0}$. Taking
$d=i_0$ completes the proof.
\end{proof}

\begin{lemma}\label{L:KeyHardy}
Let   $a\colon [c,\infty)\to \R$ be a Hardy field function with
polynomial growth rate and $a_1,a_2\in \cF(a)$ be such that
$a_1\succ t^{\varepsilon}$ for some $\varepsilon>0$ and $a_2\ll
a_1$.

\begin{enumerate}[label=(\roman{*})]
\item  If $a_1\ncong a_2$, then  $S_ha_1-a_2 \sim
 a_1$ for every non-zero $h\in \R$.

\item If  $a_1\cong a_2$, then $S_ha_1-a_2\ll a_1/t$ for every $h\in
\R$, and $S_ha_1-a_2\sim a_1/t$ for all but one $h\in \R$.
\end{enumerate}
\end{lemma}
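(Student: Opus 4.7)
My approach is to write $S_h a_1 - a_2 = (a_1 - a_2) + (S_h a_1 - a_1)$ and determine the dominant term using the two structural tools already available: Lemma~\ref{L:Key1}, which classifies every $b \in \cF(a)$ as either $b \to 0$ or $b \sim a/t^k$ for some $k \in \{0, \ldots, d\}$; and Lemma~\ref{L:properties}, whose proof implicitly gives the refined expansion $S_h a_1 - a_1 = hL \cdot a_1/t + o(a_1/t)$, where $L = \lim_{t\to\infty} t a_1'(t)/a_1(t)$ is a positive constant depending only on $a_1$.

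For (i), I would show $a_1 - a_2 \sim a_1$ in every subcase. Applying Lemma~\ref{L:Key1} gives $a_1 \sim c_1\, a/t^{k_1}$ with $c_1 \neq 0$, and either $a_2 \to 0$ or $a_2 \sim c_2\, a/t^{k_2}$ with $c_2 \neq 0$; the hypothesis $a_2 \ll a_1$ forces $k_1 \leq k_2$ or $a_2 \to 0$. If $k_1 < k_2$ or $a_2 \to 0$, then $a_2 \prec a_1$ and hence $a_1 - a_2 \sim a_1$. If $k_1 = k_2$, then $\deg(a_1) = \deg(a_2)$, and the hypothesis $a_1 \not\cong a_2$ forces $\deg(a_1 - a_2) = \deg(a_1)$, so applying Lemma~\ref{L:Key1} to $a_1 - a_2 \in \cF(a)$ yields $a_1 - a_2 \sim a/t^{k_1} \sim a_1$. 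Combining with $S_h a_1 - a_1 \ll a_1/t \prec a_1$ (valid since $a_1 \succ t^\varepsilon \succ 1$) then gives $S_h a_1 - a_2 \sim a_1$ for every $h$ (in particular nonzero).

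For (ii), $a_1 \cong a_2$ forces $k_1 = k_2$ with matching leading coefficients, so Lemma~\ref{L:Key1} applied to $a_1 - a_2$ yields either $a_1 - a_2 \to 0$ or $a_1 - a_2 \sim c_0\, a/t^j$ for some $j \geq k_1 + 1$; in either subcase one checks $a_1 - a_2 \ll a_1/t$ (for the $\to 0$ case using that the Taylor-remainder bound $a_1 - a_2 \ll a/t^{d+1}$ implicit in the proof of Lemma~\ref{L:Key1} implies $\ll a_1/t$ since $k_1 \leq d$). Combined with $S_h a_1 - a_1 \ll a_1/t$ this gives the first assertion $S_h a_1 - a_2 \ll a_1/t$ for all $h$. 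For the second assertion I would distinguish two subcases: if $a_1 - a_2 \prec a_1/t$ (covering $a_1 - a_2 \to 0$ and $j > k_1 + 1$), then $S_h a_1 - a_2 = hL \cdot a_1/t + o(a_1/t) \sim a_1/t$ for every $h \neq 0$, so $h = 0$ is the sole exception; if instead $j = k_1 + 1$, then $a_1 - a_2 = \alpha' \cdot a_1/t + o(a_1/t)$ for some $\alpha' \neq 0$, so $S_h a_1 - a_2 = (hL + \alpha') \cdot a_1/t + o(a_1/t)$ is $\sim a_1/t$ except at the single $h = -\alpha'/L$.

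The main technical obstacle is tracking the leading-order coefficient of $S_h a_1 - a_1$ as a linear function of $h$ precisely enough to detect when cancellation against $a_1 - a_2$ occurs in (ii); but this is controlled entirely by the first-order Taylor expansion $S_h a_1 - a_1 = h a_1' + (\textrm{lower order})$ already used in the proof of Lemma~\ref{L:Key1}, so the argument reduces to careful bookkeeping inside $\cF(a)$.
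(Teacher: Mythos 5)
Your decomposition $S_h a_1 - a_2 = (a_1-a_2) + (S_h a_1 - a_1)$ is the same one the paper uses, and your proof of part (i) is correct (a direct case analysis via Lemma~\ref{L:Key1}, where the paper instead argues by contradiction; both work). However, your treatment of the second assertion of part (ii) has a genuine gap.

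The gap: you assert that the subcase ``$a_1 - a_2 \to 0$'' is covered by ``$a_1 - a_2 \prec a_1/t$'', so that the exceptional $h$ is $0$. This implication fails precisely when $k_1 = d$, i.e.\ $a_1 \sim a$. In that situation $a_1/t \sim a/t^{d+1}$ tends to $0$ as well, and Lemma~\ref{L:Key1} tells you nothing about the relative size of $a_1 - a_2$ and $a_1/t$: both lie below the ladder $a/t^0, \ldots, a/t^d$ that the lemma classifies. Concretely, take $a(t) = t^{1/2}$ (so $d = 0$), $a_1 = a$, and $a_2 = a - (S_2 a - S_1 a)$. Then $a_1 \cong a_2$, $a_1 - a_2 = S_2 a - S_1 a \to 0$, yet $a_1 - a_2 \sim a' \sim a_1/t$, so $(a_1-a_2)/(a_1/t)$ tends to a nonzero constant. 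Here the unique exceptional $h$ is $-1$, not $0$, so your classification of the exception by the ``type'' of $a_1 - a_2$ in Lemma~\ref{L:Key1} misidentifies it. The finer distinction you actually need is whether $(a_1-a_2)/(a_1/t)$ tends to $0$ or to a nonzero constant; this limit exists because $a_1 - a_2$ and $a_1/t$ are comparable Hardy field functions, but it is not determined by whether $a_1 - a_2 \to 0$.

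Your leading-coefficient bookkeeping can be salvaged by replacing the Lemma~\ref{L:Key1}-based split with the split on $c := \lim (a_1 - a_2)/(a_1/t)$, giving the exception at $h = -c/L$. The paper's own proof sidesteps the issue entirely: rather than computing the exceptional $h$, it shows $S_h a_1 - a_2 \ll a_1/t$ for all $h$, and then notes that if $S_{h_0} a_1 - a_2 \prec a_1/t$ for some $h_0$, the identity $S_h a_1 - a_2 = (S_h a_1 - S_{h_0} a_1) + (S_{h_0} a_1 - a_2)$ together with $S_{h} a_1 - S_{h_0} a_1 \sim a_1/t$ (for $h \neq h_0$, by Lemma~\ref{L:properties}) immediately forces $S_h a_1 - a_2 \sim a_1/t$, so at most one exceptional $h$ can occur. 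That argument requires no explicit coefficient tracking and is immune to the difficulty above.
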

\begin{remark}The assumption $a_1,a_2\in \cF(a)$ is necessary. For $(i)$ take
$a_1(t)=t^{1.5}+t^{1.1}, a_2(t)=t^{1.5}$, and for $(ii)$ take
$a_1(t)=t^{1.5}+t^{0.9}, a_2(t)=t^{1.5}$.
\end{remark}
\begin{proof}
We prove $(i)$. Suppose on the contrary that  $S_ha_1-a_2\nsim a_1$
for some $h\in \R$. Since $S_ha_1-a_2\ll a_1$, we deduce that
$S_ha_1-a_2\prec a_1$.

We claim that  $S_ha_1-a_2\ll a_1/t$. Indeed,  by Lemma~\ref{L:Key1}
we have $a_1\sim a/t^k$ for some non-negative integer $k$.  Since
$S_ha_1-a_2 \in \cF(a)$, Lemma~\ref{L:Key1} gives that either
$S_ha_1-a_2\prec 1$, or $S_ha_1-a_2\sim a/t^{k'}$ for some
non-negative integer $k'$. If $S_ha_1-a_2\prec 1$, then the claim is
proved because $\deg (a_1)\geq 1$. If $S_ha_1-a_2\sim a/t^{k'}$,
then since  $S_ha_1-a_2\prec a_1\sim a/t^{k}$, we deduce that
$k'>k$, proving the claim.

Using the previous claim, Lemma~\ref{L:properties}, and expressing
$a_1-a_2$ as $(a_1-S_ha_1)+(S_ha_1-a_2)$, we deduce that $a_1-a_2\ll
a_1/t$. This is a contradiction since by assumption   $a_1\ncong
a_2$.

We prove $(ii)$. Expressing  $S_ha_1-a_2$ as $(S_ha_1-a_1)+
(a_1-a_2)$ and using Lemma~\ref{L:properties} and our assumption
$a_1\cong a_2$, we see that for every $h\in \R$ we have
$S_ha_1-a_2\prec a_1$. From this we deduce  as in the proof of part
$(i)$ that $S_ha_1-a_2\ll a_1/t$ for every $h\in \R$. It remains to
show that if $S_{h_0}a_1-a_2\prec a_1/t$, then $S_{h}a_1-a_2\sim
a_1/t$ for every $h\neq h_0$. To see this, we express $S_{h}a_1-a_2$
as $(S_ha_1-S_{h_0}a_1)+(S_{h_0}a_1-a_2)$, and use that by
Lemma~\ref{L:properties} we have $S_{h}a_1-a_1\sim a_1/t$ for every
non-zero $h\in \R$. This completes the proof.
\end{proof}

\subsection{Reducing the type}\label{SS:Reducing} The next lemma is a
key ingredient of  the proof of Proposition~\ref{P:CharB2}.

\begin{lemma}\label{L:reduceA}
Let $(\A,\B)$ be  a nice family of pairs of functions,  and suppose
that $\deg(a_1)\geq 1$.
 Then there exist
  $\tilde{a}\in \cA\cup\{0\}$ and $\tilde{b}\in \cB$, such that for every large enough $h\in\N$,  the
 family $(\tilde{a},\tilde{b},h)\vdc(\A,\B)$   is nice and has type strictly smaller than that of $(\A,\B)$.
\end{lemma}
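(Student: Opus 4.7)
The plan is to split into two cases based on whether $\B'$ is empty and, in each case, to pick the subtracted pair $(\tilde{a}, \tilde{b})$ at the \emph{smallest} available degree. This minimality is what prevents the van der Corput operation from producing spurious new equivalence classes at positions of the type matrix strictly higher (in the lex order) than the one being reduced.

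\emph{Case A ($\B'\neq\emptyset$).} By the convention that no pair of bounded functions appears in $(\A,\B)$, every $b_i\in\B'$ is unbounded, so some $w_{2,j}$ is positive. Let $d^{**}\=\min\{j\geq 0:w_{2,j}>0\}$, pick an index $i^{*}$ with $a_{i^{*}}$ bounded and $b_{i^{*}}$ a representative of a degree-$d^{**}$ class in $\B'$, and set $\tilde{a}\=a_{i^{*}}$, $\tilde{b}\=b_{i^{*}}$. Since $\tilde{a}$ is bounded, subtracting it preserves the degree and leading coefficient of every unbounded $a_i$ and $S_ha_i$, so the top row of the type is unchanged. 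At degree $d^{**}$ of the bottom row the class of $\tilde{b}$ disappears (the unshifted pair becomes $(0,0)$ and is removed by the $^\ast$ operation; $S_h\tilde{b}-\tilde{b}$ drops to degree $d^{**}-1$ by Lemma~\ref{L:properties}); the remaining degree-$d^{**}$ classes are simply relabeled $b_i\mapsto b_i-\tilde{b}$, preserving their count; and, crucially, there is no $b_i\in\B'$ of degree smaller than $d^{**}$ that could generate a spurious new degree-$d^{**}$ class through the subtraction $-\tilde{b}$. Hence $w_{2,d^{**}}$ strictly decreases and no earlier entry of the type matrix changes.

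\emph{Case B ($\B'=\emptyset$).} Every $a_i$ is now unbounded, so $\A'=\A$. Let $d^{*}\=\min\{j\geq 0:w_{1,j}>0\}$, pick $i^{*}$ with $\deg a_{i^{*}}=d^{*}$, and set $(\tilde{a},\tilde{b})\=(a_{i^{*}},b_{i^{*}})$. An identical bookkeeping argument, with the top row in place of the bottom, shows that $w_{1,j}$ is unchanged for $j>d^{*}$ and that $w_{1,d^{*}}$ drops by one; the shifted copy $S_h\tilde{a}-\tilde{a}$ lies at degree $d^{*}-1$, strictly below the reduced position, so it causes no harm.

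Niceness of the new family in both cases: its leading pair may be taken as $(a_1-\tilde{a},b_1-\tilde{b})$, because $\tilde{a}$ is either bounded or of strictly smaller degree than $a_1$, so by the original niceness condition (1) it is dominated by all shifted and unshifted $a_1$-translates. The only serious check is condition (1) in the form $(a_1-\tilde{a})-(S_ha_i-\tilde{a})=a_1-S_ha_i\succ 1$: writing $a_1-S_ha_i=(a_1-a_i)-(S_ha_i-a_i)$ and applying Lemma~\ref{L:KeyHardy}(ii) to the second summand, the leading terms can cancel for at most one value of $h\in\R$, so the condition holds for all large $h\in\N$; conditions (2) and (3) transfer from their originals using Lemma~\ref{L:properties} to control the shifts. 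The main obstacle I expect is precisely this combinatorial bookkeeping --- ruling out spurious classes at higher lex positions --- and it is the minimality of $d^{*}$ or $d^{**}$ in the choice of $\tilde{a}$ or $\tilde{b}$ that resolves it; the at-most-one-bad-$h$ phenomenon is the reason the lemma is stated for all sufficiently large $h$ rather than a single value.
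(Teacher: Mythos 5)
Your Case B has a gap in the niceness check. You pick any $\tilde{a}=a_{i^*}$ with $\deg a_{i^*}=d^*=\min\{j:w_{1,j}>0\}$. Nothing prevents $d^*=\deg(a_1)$ (all unbounded functions in $\A$ having the same degree) while $w_{1,d^*}\geq 2$, so that some $a_j\ncong a_1$ is present; your rule is then free to pick a $\tilde{a}$ equivalent to $a_1$. In that event Lemma~\ref{L:KeyHardy}(ii) forces the leading pair $(S_ha_1-\tilde{a},S_hb_1-\tilde{b})$ of $(\tilde{a},\tilde{b},h)\vdc(\A,\B)$ to have $a$-component of degree $\leq d^*-1$, while every $a_j\ncong\tilde{a}$ contributes $S_ha_j-\tilde{a}\sim a_j$ (by Lemma~\ref{L:KeyHardy}(i)) and $a_j-\tilde{a}$, both of full degree $d^*$. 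Property~\eqref{it:nice1} then fails: $S_ha_j-\tilde{a}\not\ll S_ha_1-\tilde{a}$. Concretely, take $a_1=a$, $a_2=2a-S_1a$ (so $a_2\cong a_1$), $a_3=2a$ (so $a_3\ncong a_1$), all of degree $\deg a\geq 1$; choosing $\tilde{a}=a_1$ produces a new family whose leading $a$-component has degree $\deg a-1$ while $S_ha_3-a_1$ and $a_3-a_1$ have degree $\deg a$. The blanket justification you give for niceness --- ``$\tilde{a}$ is either bounded or of strictly smaller degree than $a_1$'' --- is precisely the assertion that fails in this regime.

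The paper avoids this by splitting Case B further: if some $a_i\ncong a_1$, it takes $\tilde{a}$ among the functions $\ncong a_1$ of minimal degree, which by Lemma~\ref{L:KeyHardy}(i) keeps $S_ha_1-\tilde{a}\sim a_1$ at full degree, so the leading pair remains dominant; only when every $a_i\cong a_1$, or when $\A$ is a singleton, does it take $(\tilde{a},\tilde{b})=(a_1,b_1)$. Note also that the vdC operation fixes the leading pair of the new family to be $(S_ha_1-\tilde{a},S_hb_1-\tilde{b})$, not $(a_1-\tilde{a},b_1-\tilde{b})$ as you write; this is what the induction in Proposition~\ref{P:CharB2} relies on, since the seminorm hypothesis on $f_1$ propagates to the function attached to that shifted pair. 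Your Case A and the type bookkeeping are otherwise sound and match the paper in spirit.
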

\begin{proof}
By assumption, there exists a Hardy field $\H$, functions $a,b\in
\cG\cap \H$, and $a_1,\ldots, a_m\in \cF(a)$, $b_1,\ldots, b_m \in
\cF(b)$, such that $\A=(a_1,\ldots,a_m)$, $\B=(b_1,\ldots,b_m)$.
Given a pair of functions $(\tilde{a},\tilde{b})\in (\A,\B)$ and
$h\in\N$, the family $(\tilde{a},\tilde{b},h)\vdc(\A,\B)$ is an
ordered family of pairs of functions, all of them of the form
$$
(S_ha_i-\tilde{a},S_hb_i-\tilde{b}), \  \text{ or } \
(a_i-\tilde{a}, b_i-\tilde{b}).
$$

\subsubsection*{We choose $(\tilde{a},\tilde{b})$ as follows:}
  If  the family $\cB'$, defined by \eqref{E:B'}, is non-empty, then we take $\tilde{a}=0$ and let
  $\tilde{b}$ be a function in $\cB'$ with minimal degree.
  Then the first row of the matrix type remains unchanged, and
 one easily checks  using Lemma~\ref{L:KeyHardy} in the positive degree case and   Lemma~\ref{L:properties}
 in the $0$ degree case,
 that the
 second row of the matrix type gets ``reduced", leading to a smaller matrix type
 for every $h\in \N$.
Suppose now that the family  $\cB'$ is empty, in which case all the
functions in the family $\cA$ are unbounded.
   If $\A$
  consists of a single function $a_1$, then we choose
  $(\tilde{a},\tilde{b})\=(a_1,b_1)$ and the result follows. Therefore, we can assume
  that $\A$ contains a function other than $a_1$. We consider two
  cases. If $a_i \cong a_1$ for $i=2,\ldots, m$, then we choose
  $(\tilde{a},\tilde{b})\=(a_1,b_1)$.
Otherwise, we choose
  $(\tilde{a},\tilde{b})\in (\A,\B)$ such that
  $\tilde{a}\ncong a_1$  and $\tilde{a}$  be a function in $\A'$ (see \eqref{E:A'}), with minimal degree
  (such a choice exists since $a_1$ has the highest degree in $\A$).

In all cases, for every $h\in \N$,  one checks using Lemmas~
\ref{L:properties} and \ref{L:KeyHardy} that the first row of the
matrix type of $(\tilde{a},\tilde{b},h)\vdc(\A,\B)$ is ``smaller''
than that of $(\A,\B)$, and as a consequence the new family has
strictly smaller type.

It remains to verify that for every  large enough   $h\in\N$   the
ordered family of pairs of functions
$(\tilde{a},\tilde{b},h)\vdc(\A,\B)$ is nice. We remark that,  by
construction,  the first  pair of functions in this family is
$(S_ha_1-\tilde{a}, S_hb_1-\tilde{b})$.

\begin{claim*}
Property~\eqref{it:nice1} of Definition~\ref{D:good} holds for all
large enough $h\in \N$.
\end{claim*}
 To prove the first part of Property~\eqref{it:nice1} it suffices to show
 that for all large enough $h\in \N$
$$ S_ha_1-S_ha_i \to \infty \text{ for } i=2,\ldots, m
$$
 and
$$
S_h a_1 -a_i \to \infty \text{ for } =1,\ldots, m.
$$
The first  property follows immediately from our assumption
$a_1-a_i\to \infty$ for $i=1,\ldots, m$, and  the second property
follows  upon observing that for all large enough $h\in \N$ we have
by Lemma~\ref{L:KeyHardy} that  $a_1/t \ll S_h a_1-a_i$ and our
assumption $\deg (a_1)\geq 1$ which combined with the property
$a_1\in \G$ gives that $t \prec a_1$.


To prove the second  part of Property~\eqref{it:nice1} it suffices
to show that for all large enough $h\in \N$
$$
S_h a_i-\tilde{a}\ll S_h a_1-\tilde{a},  \text{ for } i=1,\ldots, m
$$
and
 $$
 a_i-\tilde{a}\ll S_h a_1-\tilde{a}, \text{ for } i=1,\ldots, m.
 $$
We only prove the first property, the second can be proved in a
similar fashion. We consider two cases. If $\tilde{a}\ncong a_1$,
then by Lemma~\ref{L:KeyHardy} for all but one $h\in \N$ we have
$S_h a_1-\tilde{a}\sim a_1$, and the estimate follows  by our
assumption $a_i\ll a_1$ for $i=1,\ldots, m$. If $\tilde{a}\cong
a_1$, then by construction $\tilde{a}\cong a_i$ for $i=1,\ldots, m$.
Therefore, for all large enough $h\in \N$ we have by
Lemma~\ref{L:KeyHardy} that $S_h a_i-\tilde{a} \sim a_1/t$ for
$i=1,\ldots, m$. The result follows.



\begin{claim*} Property~\eqref{it:nice2} of Definition~\ref{D:good} holds for all large enough   $h\in \N$. \end{claim*}
It suffices to show that  for all large enough $h\in \N$
$$
S_h b_i-\tilde{b}\prec S_h a_1-\tilde{a},  \text{ for } i=1,\ldots,
m
$$
and
$$
b_i-\tilde{b}\prec S_h a_1-\tilde{a}, \text{  for } i=1,\ldots, m.
$$
 We only prove the first property, the second one can be proved in a similar fashion.
 We consider two cases.

 If $\tilde{a} \ncong a_1$, then by Lemma~\ref{L:KeyHardy}
for all but one  $h\in \N$ we have $S_h a_1-\tilde{a}\sim a_1$, and
so the result follows since by assumption $b_i\prec a_1$ for
$i=1,\ldots, m$.

If $\tilde{a}\cong a_1$, then by construction
$(\tilde{a},\tilde{b})=(a_1,b_1)$ and $a\cong a_i$ for $i=1,\ldots,
m$. It therefore remains to show that for all large enough $h\in \N$
we have  $S_h b_i-b_1\prec S_h a_1-a_1$ for $i=1,\ldots, m$. To see
this, we express  $S_h b_i-b_1$ as $(S_hb_i-b_i) +(b_i-b_1)$.  If
$1\prec b_i$, then $b_i\in \cG$ (by Lemma~\ref{L:Key1}) and
Lemma~\ref{L:properties} gives that for every $h\in \N$ we have
$S_hb_i-b_i \ll b_i/t\prec  a_1/t$. If $b_i\ll 1$, then since
$t\prec a_1$ we still get $S_hb_i-b_i \prec  a_1/t$. Furthermore,
for $i=2,\ldots,m$, by assumption we have $b_i-b_1\prec a_i-a_1$ and
by Lemma~\ref{L:KeyHardy} we have $a_i-a_1\ll a_1/t$. Combining the
above we get for every $h\in \N$ that  $S_h b_i-b_1\prec a_1/t$ for
$i=1,\ldots, m$.
 Since by Lemma~\ref{L:properties} for every $h\in \N$ we have  $S_h a_1-a_1 \sim a_1/t$, the result
 follows.

\begin{claim*}
Property~\eqref{it:nice3} of Definition~\ref{D:good} holds for all
large enough $h$.
\end{claim*}
Equivalently, we claim that for all large enough $h\in \N$
 $$
 S_hb_1-S_hb_i \prec  S_ha_1-S_ha_i,
  \text{ for } i=2,\ldots,m,
    $$
    and
 $$
 S_hb_1-b_i \prec S_ha_1-a_i,
  \text{ for } i=1,\ldots,m.
    $$
    The first property follows immediately from our hypothesis $b_1-b_i \prec a_1-a_i$
   for  $i=2,\ldots,m$. We  verify the second property.   If $a_i\ncong a_1$, then
  by Lemma~\ref{L:KeyHardy} we have for all large enough $h\in \N$ that
     $S_ha_1-a_i\sim a_1$ for $i=2,\ldots,m$. The desired estimate now follows since  by hypothesis
$b_i\prec a_1$
      for $i=1,\ldots,m$. Suppose now that  $a_i\cong a_1$. Then
     Lemma~\ref{L:KeyHardy}   gives  for all
     large enough $h\in \N$ that  $S_ha_1-a_i\sim a_1/t$.
       So it remains to verify that for every
     large enough $h\in \N$ we have $S_hb_1-b_i \prec a_1/t $. To
see this we express  $S_hb_1-b_i$ as $(S_hb_1-b_1)+(b_1-b_i)$.
Our assumptions and Lemma~\ref{L:properties} give that
$S_hb_1-b_1\sim b_1/t\prec a_1/t$ for all $h\in \N$. Furthermore,
our  assumptions and Lemma~\ref{L:KeyHardy}   give that
$b_1-b_i\prec a_1-a_i \ll a_1/t$. Hence, for every $h\in \N$ we have
$S_hb_1-b_i\prec a_1/t$, as desired.
This completes the proof.
\end{proof}

\subsection{Some ergodic estimates}
We gather here some simple ergodic estimates that will be used in
the proof of   Proposition~\ref{P:CharB2}.



 Using successive applications of Lemma~\ref{L:N-VDC} one can
show the following (see for example Case $1$ of Proposition~5.3 in
\cite{Fr10}):
\begin{lemma}\label{L:linear}
Let  $(X,\X,\mu, T)$ be a system, $f_1,\ldots,f_m\in L^\infty(\mu)$
be functions  bounded by $1$,
 and  $\alpha_1,\ldots,\alpha_m$ be non-zero integers such that  $\alpha_1\neq \alpha_i$ for $i=2,\ldots m$.
 Then there exists $C=C_{m,
\alpha_2,\ldots, \alpha_m}$ such that
$$
\limsup_{N-M\to\infty} \sup_{\norm{f_2}_\infty, \ldots,
\norm{f_m}_\infty\leq 1}\norm{ \frac{1}{N-M}\sum_{n=M}^N
\prod_{i=1}^m T^{[\alpha_i n]}f_i}_{L^2(\mu)} \leq C \ \!
\nnorm{f_1}_{2m,T}.$$
\end{lemma}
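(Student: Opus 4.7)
The plan is to proceed by induction on $m$ via repeated applications of Lemma~\ref{L:N-VDC}, in the spirit of the PET-style argument carried out as Case~1 of Proposition~5.3 in \cite{Fr10}. A convenient preliminary step is to group any coinciding $\alpha_i$ for $i\ge 2$: the product $T^{\alpha_i n}f_i\cdot T^{\alpha_j n}f_j = T^{\alpha_i n}(f_if_j)$ merges two factors into one of $L^\infty$-norm at most $1$, and since $\alpha_1\ne\alpha_i$ for $i\ge 2$ the factor $T^{\alpha_1 n}f_1$ is never absorbed. After this reduction we may assume that $\alpha_1,\ldots,\alpha_m$ are pairwise distinct nonzero integers, and by monotonicity of the Gowers--Host--Kra seminorms it is enough to produce the bound with any index $k(m)\le 2m$.

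For the inductive step, let $v_n=\prod_{i=1}^m T^{\alpha_i n}f_i$. Applying Lemma~\ref{L:N-VDC} and the measure-preserving change of variable $T^{-\alpha_1 n}$ inside the integrals $\int v_{n+h}\bar v_n\,d\mu$, one rewrites each such integral as
\[
\int F_h\cdot\prod_{i=2}^m T^{\beta_i n}g_{i,h}\,d\mu,\qquad F_h:=T^{\alpha_1 h}f_1\cdot\bar f_1,\ \ g_{i,h}:=T^{\alpha_i h}f_i\cdot\bar f_i,\ \ \beta_i:=\alpha_i-\alpha_1,
\]
where the $\beta_i$ are pairwise distinct nonzero integers. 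This exhibits $\int v_{n+h}\bar v_n\,d\mu$ as the pairing of the $n$-independent factor $F_h$ with a new $(m-1)$-fold ergodic average $B_N^{(h)}:=\tfrac{1}{N-M}\sum_n\prod_{i=2}^m T^{\beta_i n}g_{i,h}$. The critical step is to open up $|\int F_h B_N^{(h)}\,d\mu|^2$ as a double integral in $L^2(\mu\times\mu)$ and recognize $F_h(x)\,\overline{F_h(y)}=(f_1\otimes\bar f_1)(x,y)\cdot(T\times T)^{\alpha_1 h}(\bar f_1\otimes f_1)(x,y)$. Taking the Cesàro average over $h$ and invoking the seminorm recursion~\eqref{E:recur} produces a bound in terms of $\nnorm{f_1\otimes\bar f_1}^{2^{k-1}}_{k-1,\,T\times T}$ for some $k$ that has been increased by one at this round; the product-system inequality~\eqref{E:seminonergodic}, namely $\nnorm{f_1\otimes\bar f_1}_{k-1,\,T\times T}\le \nnorm{f_1}_{k,T}^{2}$, then converts this into a bound in terms of $\nnorm{f_1}_{k,T}$. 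Iterating this VDC--Cauchy--Schwarz cycle $m-1$ times, with the inductive hypothesis handling the $(m-1)$-fold inner average at each step, gives the claimed bound with index at most $m+1\le 2m$. The base case $m=1$ reduces to the mean ergodic theorem together with the elementary estimate $\|\E(f_1\mid\mathcal I_{T^{\alpha_1}})\|_{L^2}\le C_{\alpha_1}\nnorm{f_1}_{2,T}$, obtained by writing the square of the left-hand side as a Cesàro limit of $\int T^{\alpha_1 h}f_1\cdot\bar f_1\,d\mu$ and applying Cauchy--Schwarz in $h$.

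The main obstacle is to route the iterated Cauchy--Schwarz so that the seminorm which eventually appears is of $F_h$ (hence of $f_1$) rather than of some $g_{i,h}$ (which would give a seminorm of $f_i$ with $i\ge 2$); the product-space trick with $F_h\overline{F_h}$ is precisely what achieves this. A secondary technical point is the comparison of seminorms taken with respect to $T$ versus $T^{\alpha_1}$ arising from the sparsification $h\mapsto\alpha_1 h$; this is absorbed into the constant $C_{m,\alpha_2,\ldots,\alpha_m}$ and is what accounts for the $\alpha$-dependence in the statement.
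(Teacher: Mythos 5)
There is a genuine gap, and it is located precisely at the step you flag as critical. By composing with $T^{-\alpha_1 n}$ you make the $f_1$-factor drop into the $n$-independent weight $F_h=T^{\alpha_1 h}f_1\cdot\bar f_1$, and the remaining $(m-1)$-fold average $B_N^{(h)}$ no longer contains $f_1$ at all; its terms are built from $g_{i,h}=T^{\alpha_i h}f_i\cdot\bar f_i$ with $i\ge 2$. The inductive hypothesis applied to $B_N^{(h)}$ therefore yields a bound by $\nnorm{g_{2,h}}_{2(m-1),T}$, a seminorm of $f_2$, which is of no use. Your proposed fix --- opening $\bigl|\int F_h B_N^{(h)}\,d\mu\bigr|^2$ on $X\times X$ and then taking the Ces\`aro average in $h$ and invoking \eqref{E:recur} --- does not close this gap: after expanding, the integrand on $X\times X$ is $ (f_1\otimes\bar f_1)\cdot(T\times T)^{\alpha_1 h}(\bar f_1\otimes f_1)\cdot B_N^{(h)}(x)\overline{B_N^{(h)}(y)}$, and the last factor still depends on $h$ (through $g_{i,h}$) and is a genuine double sum in $(n,n')$, not an ergodic average on the product system. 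One cannot pass the $h$-average through it and recognize the seminorm recursion \eqref{E:recur}. Any attempt to discard $B_N^{(h)}(x)\overline{B_N^{(h)}(y)}$ by $L^\infty$- or $L^2$-bounds (it is bounded by $1$) simply annihilates the gain. So as written the inductive step does not produce a bound in $\nnorm{f_1}$.

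The repair is to make the opposite choice of composing index. Change variables by $T^{-\alpha_j n}$ for some $j\ge 2$ (after your correct preliminary merging, the $\alpha_i$ may be taken pairwise distinct and nonzero, so $\alpha_1-\alpha_j\neq 0$). Then the factor that becomes the $n$-independent weight is $G_h:=T^{\alpha_j h}f_j\cdot\bar f_j$, which you can simply discard via $\bigl|\int G_h\cdot A_N\,d\mu\bigr|\le \|A_N\|_{L^2(\mu)}$, while the $f_1$-dependent factor survives inside the $(m-1)$-fold average as $T^{(\alpha_1-\alpha_j)n}(T^{\alpha_1 h}f_1\cdot\bar f_1)$. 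The inductive hypothesis then bounds the inner average by $C\,\nnorm{T^{\alpha_1 h}f_1\cdot\bar f_1}_{2(m-1),T}$, the quantity you actually want. Finally, H\"older in $h$ together with the seminorm recursion \eqref{E:recur} and the observation that averaging $\nnorm{T^{\alpha_1 h}f_1\bar f_1}_{k,T}^{2^k}$ along the subprogression $h\mapsto\alpha_1 h$ is bounded by $|\alpha_1|$ times the full Ces\`aro average (because the terms are nonnegative and one may use the uniform/F{\o}lner version of \eqref{E:recur}) produce $\nnorm{f_1}_{2m-1,T}\le\nnorm{f_1}_{2m,T}$, closing the induction with an $\alpha$-dependent constant. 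Your base case, your preliminary merging of coincident $\alpha_i$ ($i\ge 2$), and your use of the $\otimes\bar f$ trick to pass to the product system are all sound ingredients; the fatal step is only the choice of $T^{-\alpha_1 n}$ as the change of variable, which removes $f_1$ from the very average the induction is supposed to control.
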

The next two lemmas will help us handle  bounded error terms that
later on appear on the iterates of the transformations involved.
\begin{lemma}\label{L:ProdTrick}
Let $(X,\X,\mu,T_1,\ldots,T_\ell)$ be a system, $f_1,\ldots,f_m\in
L^\infty(\mu)$ be functions, and  for $i=1,\ldots, m$, $j=1,\ldots,
\ell$, let $(a_{i,j}(n))$ be sequences with integer values. Then for
every $N\in \N$
\begin{equation}\label{E:tre}
\sup_{E\subset \N}\norm{\frac{1}{N}\sum_{n=1}^N
\prod_{i=1}^m(T_1^{a_{i,1}(n)}\cdots T_\ell^{a_{i,\ell}(n)})f_i
\cdot {\bf 1}_E(n)}_{L^2(\mu)}^2 \leq \norm{\frac{1}{N}\sum_{n=1}^N
\prod_{i=1}^m(\tilde{T}_1^{a_{i,1}(n)}\cdots
\tilde{T}_\ell^{a_{i,\ell}(n)})\tilde{f}_i}_{L^2(\tilde{\mu})}
\end{equation}
where $\tilde{T}\=T\times T$, $\tilde{\mu}\=\mu \times \mu$, and
$\tilde{f}\=f\otimes \bar{f}$.
\end{lemma}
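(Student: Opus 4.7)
\medskip

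\noindent\textbf{Proof plan for Lemma~\ref{L:ProdTrick}.} The strategy is to pass to the product system, where the cross-correlations $\int S_n\overline{S_{n'}}\,d\mu$ get replaced by their squared moduli, so that a single application of the Cauchy--Schwarz inequality in the index $(n,n')$ can absorb the indicator ${\bf 1}_E$ uniformly in $E$. To shorten notation, write
$$
U_n^{(i)}\=T_1^{a_{i,1}(n)}\cdots T_\ell^{a_{i,\ell}(n)}, \qquad S_n\=\prod_{i=1}^m U_n^{(i)}f_i, \qquad c_{n,n'}\=\int S_n\,\overline{S_{n'}}\,d\mu.
$$

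First I would expand the squared $L^2(\mu)$-norm on the left-hand side of \eqref{E:tre} by Fubini to obtain
$$
\Big\|\frac{1}{N}\sum_{n=1}^N S_n\cdot{\bf 1}_E(n)\Big\|_{L^2(\mu)}^2 = \frac{1}{N^2}\sum_{n,n'=1}^N {\bf 1}_E(n)\,{\bf 1}_E(n')\,c_{n,n'}.
$$
The key observation is that since each $\tilde T_j=T_j\times T_j$ acts diagonally on $X\times X$ and $\tilde f_i=f_i\otimes\bar f_i$, one has the identity
$\prod_{i=1}^m \tilde U_n^{(i)}\tilde f_i(x,y) = S_n(x)\,\overline{S_n(y)}$,
which is immediate from the definitions. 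Applying Fubini a second time to the right-hand side of \eqref{E:tre} then yields
$$
\Big\|\frac{1}{N}\sum_{n=1}^N \prod_{i=1}^m \tilde U_n^{(i)}\tilde f_i\Big\|_{L^2(\tilde\mu)}^2 = \frac{1}{N^2}\sum_{n,n'=1}^N |c_{n,n'}|^2,
$$
so the right-hand side of \eqref{E:tre} equals $\frac{1}{N}\bigl(\sum_{n,n'}|c_{n,n'}|^2\bigr)^{1/2}$.

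Finally, I would apply the Cauchy--Schwarz inequality to the double sum $\sum_{n,n'}{\bf 1}_E(n){\bf 1}_E(n')\,c_{n,n'}$ viewed in $\ell^2(\{1,\ldots,N\}^2)$. Since $\sum_{n,n'}({\bf 1}_E(n){\bf 1}_E(n'))^2=|E\cap[1,N]|^2\leq N^2$, this gives
$$
\Big|\sum_{n,n'=1}^N {\bf 1}_E(n)\,{\bf 1}_E(n')\,c_{n,n'}\Big| \leq N\,\Big(\sum_{n,n'=1}^N |c_{n,n'}|^2\Big)^{1/2};
$$
dividing by $N^2$ and combining with the identity from the second step produces exactly the right-hand side of \eqref{E:tre}. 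Because the resulting bound is independent of the set $E$, taking the supremum over $E\subset\N$ preserves the estimate. There is no substantive obstacle: the only non-formal input is the combinatorial identity $\prod_i\tilde U_n^{(i)}\tilde f_i(x,y)=S_n(x)\overline{S_n(y)}$, after which the proof is a straightforward Fubini-plus-Cauchy--Schwarz calculation.
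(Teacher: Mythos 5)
Your proof is correct and follows essentially the same route as the paper: pass to the diagonal product system $(\tilde T_j,\tilde\mu,\tilde f_i)$ so that the cross-correlations $c_{n,n'}$ get squared, and then use a Cauchy--Schwarz step to discard the indicator $\mathbf 1_E$ uniformly. The only cosmetic difference is that the paper first bounds the expanded sum by $\frac{1}{N^2}\sum_{n,n'}|c_{n,n'}|$ via the triangle inequality and then squares and applies the power-mean inequality, whereas you apply Cauchy--Schwarz directly to the bilinear form $\sum_{n,n'}\mathbf 1_E(n)\mathbf 1_E(n')c_{n,n'}$; both steps land on exactly the same quantity $\frac{1}{N}\bigl(\sum_{n,n'}|c_{n,n'}|^2\bigr)^{1/2}$.
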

\begin{proof}
Letting
$$
F_n\=\prod_{i=1}^m(T_1^{a_{i,1}(n)}\cdots
T_\ell^{a_{i,\ell}(n)})f_i,
$$
we see that the left hand side in \eqref{E:tre} is bounded by
$$
\frac{1}{N^2} \sum_{1\leq m, n\leq N} \Big|\int F_n \cdot \bar{F}_m
\ d\mu\Big|.
$$
It follows that the square of the left hand side in \eqref{E:tre} is
bounded by
$$
\frac{1}{N^2} \sum_{1\leq m, n\leq N} \Big|\int F_n \cdot \bar{F}_m
\ d\mu \Big|^2=\frac{1}{N^2}\sum_{1\leq m, n\leq N} \int G_n \cdot
\bar{G}_m \ d\tilde{\mu} =\norm{\frac{1}{N}\sum_{n=1}^N
G_n}_{L^2(\tilde{\mu})}^2
$$
where
$$
G_n\=\prod_{i=1}^m(\tilde{T}_1^{a_{i,1}(n)}\cdots
\tilde{T}_\ell^{a_{i,\ell}(n)})\tilde{f}_i.
$$
This completes the proof.
\end{proof}
We deduce from this the following:
\begin{lemma}\label{L:ProdTrick'}
Let $(X,\X,\mu,T_1,\ldots,T_\ell)$ be a system, $f_1,\ldots,f_m\in
L^\infty(\mu)$ be functions, and  for $i=1,\ldots, m$, $j=1,\ldots,
\ell$, let $(a_{i,j}(n))$ be sequences with integer values and
$(e_{i,j}(n))$ be sequences that take values in some finite set of
integers $F$. Then for every $N\in \N$
\begin{multline*}
\sup_{E\subset \N}\norm{\frac{1}{N}\sum_{n=1}^N
\prod_{i=1}^m(T_1^{a_{i,1}(n)+e_{i,1}(n)}\cdots
T_\ell^{a_{i,\ell}(n)+e_{i,\ell}(n)})f_i \cdot {\bf
1}_E(n)}_{L^2(\mu)}^2 \leq\\ |F|^{2\ell m } \cdot \max_{c_{i,j}\in
F}\norm{\frac{1}{N}\sum_{n=1}^N
\prod_{i=1}^m(\tilde{T}_1^{a_{i,1}(n)+c_{i,1}}\cdots
\tilde{T}_\ell^{a_{i,\ell}(n)+c_{i,\ell}})\tilde{f}_i}_{L^2(\tilde{\mu})}
\end{multline*}
where $\tilde{T}\=T\times T$, $\tilde{\mu}\=\mu \times \mu$, and
$\tilde{f}\=f\otimes \bar{f}$.
\end{lemma}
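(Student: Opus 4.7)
The idea is to reduce the statement to Lemma~\ref{L:ProdTrick} by partitioning $\N$ according to the values of the error sequences $(e_{i,j}(n))$, which only take values in the finite set $F$. For each tuple
$$\vec c=(c_{i,j})_{1\le i\le m,\,1\le j\le \ell}\in F^{\ell m},$$
let
$$E_{\vec c}\=\{n\in\N : e_{i,j}(n)=c_{i,j}\text{ for all } i,j\}.$$
These $|F|^{\ell m}$ sets (some possibly empty) form a partition of $\N$. Given any $E\subset\N$ we split $E=\bigsqcup_{\vec c}(E\cap E_{\vec c})$, and on each piece $E\cap E_{\vec c}$ the error terms are replaced by the constants $c_{i,j}$. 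Consequently the averaged expression on the left-hand side of the claimed inequality equals $\sum_{\vec c} Y_N(\vec c)$, where
$$Y_N(\vec c)\=\frac{1}{N}\sum_{n=1}^N \prod_{i=1}^m\bigl(T_1^{a_{i,1}(n)+c_{i,1}}\cdots T_\ell^{a_{i,\ell}(n)+c_{i,\ell}}\bigr)f_i\cdot{\bf 1}_{E\cap E_{\vec c}}(n).$$

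Next I would combine the triangle inequality with the Cauchy--Schwarz inequality in the finite index set $F^{\ell m}$, obtaining
$$\Bigl\|\sum_{\vec c}Y_N(\vec c)\Bigr\|_{L^2(\mu)}^2\le |F|^{\ell m}\sum_{\vec c}\|Y_N(\vec c)\|_{L^2(\mu)}^2\le |F|^{2\ell m}\max_{\vec c}\|Y_N(\vec c)\|_{L^2(\mu)}^2.$$
For each fixed $\vec c$ the shifted sequences $a_{i,j}(n)+c_{i,j}$ are still integer-valued, so Lemma~\ref{L:ProdTrick}, applied with $E\cap E_{\vec c}$ in the role of the arbitrary subset of $\N$, yields
$$\|Y_N(\vec c)\|_{L^2(\mu)}^2\le \Bigl\|\frac{1}{N}\sum_{n=1}^N\prod_{i=1}^m\bigl(\tilde T_1^{a_{i,1}(n)+c_{i,1}}\cdots \tilde T_\ell^{a_{i,\ell}(n)+c_{i,\ell}}\bigr)\tilde f_i\Bigr\|_{L^2(\tilde\mu)}.$$
Inserting this into the previous display and taking the supremum over $E\subset\N$ produces precisely the desired bound.

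\textbf{Main obstacle.} There is no substantive difficulty; the argument is purely bookkeeping. The only point requiring care is tracking where the factor $|F|^{2\ell m}$ comes from: $|F|^{\ell m}$ arises from Cauchy--Schwarz over the index set of tuples $\vec c$, and a second factor of $|F|^{\ell m}$ arises from replacing the sum over $\vec c$ by the maximum, which together account for the exponent in the statement.
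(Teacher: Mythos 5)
Your proof is correct and follows essentially the same approach as the paper: partition by the constant values of the error sequences, apply the triangle inequality and Cauchy--Schwarz over the finite index set, and then invoke Lemma~\ref{L:ProdTrick} on each piece (where the errors have been absorbed into the shifted integer-valued sequences). The paper's own proof is stated tersely, and your writeup simply fills in the elementary bookkeeping that accounts for the constant $|F|^{2\ell m}$.
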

\begin{proof}
The $L^2(\mu)$ norm on left hand side is less than
$$
\sum_{j=1}^t \norm{\frac{1}{N}\sum_{n=1}^N
\prod_{i=1}^m(T_1^{a_{i,1}(n)+e_{i,1}(n)}\cdots
T_\ell^{a_{i,\ell}(n)+e_{i,\ell}(n)})f_i \cdot {\bf
1}_{E_j}(n)}_{L^2(\mu)} $$ where the sets $E_1,\ldots, E_t$ ($t\leq
|F|^{\ell m}$) form a partition of $E$ into sets where the sequences
$e_{i,j}$ are all constant.
 The desired estimate is now an immediate consequence of Lemma~\ref{L:ProdTrick}.
\end{proof}

\subsection{Proof of Proposition~\ref{P:CharB2}}
We start with  an elementary  lemma that will be used to prove
seminorm estimates
   in the case where all the iterates have sub-linear growth.
\begin{lemma}\label{L:ChangeVar} Let $a\colon [c,\infty)\to \R$ be a positive Hardy field function that
satisfies the growth condition  $\log{t} \prec a(t) \prec t$ and
$(A(n))$ be a bounded sequence in a normed space such that $
\lim_{N-M\to\infty}\norm{\frac{1}{N-M}\sum_{n=M}^{N} A(n)}=0 $. Then
$ \lim_{N\to\infty}\norm{\frac{1}{N}\sum_{n=1}^N A([a(n)])}=0.$
\end{lemma}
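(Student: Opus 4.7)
The plan is to group the sum $\sum_{n=1}^N A([a(n)])$ by the value of $[a(n)]$ and then exploit the uniform Ces\`aro hypothesis on $A$ in blocks of consecutive integer values. I would begin by noting that since $a$ is a positive Hardy field function, it is eventually strictly monotonic; by l'Hopital applied to the Hardy field ratio $a(t)/t\to 0$ one has $a'(t)\to 0$, so that $[a(n+1)]-[a(n)]\in\{0,1\}$ for all large $n$. In particular $[a(\cdot)]$ takes all integer values in its range on long initial segments. Writing $w_m=\#\{n\in[1,N]:[a(n)]=m\}$, the sum becomes $\sum_{n=1}^N A([a(n)])=\sum_m w_mA(m)$ with $\sum_m w_m=N$.

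The key technical step is a slow-variation statement for the multiplicities $w_m$, which in turn reduces to the assertion
\[
\frac{a''(t)}{a'(t)^2}\longrightarrow 0\qquad \text{as }t\to\infty.
\]
This is where the hypothesis $\log t\prec a(t)$ enters essentially: l'Hopital gives $\lim ta'(t)=\lim a(t)/\log t=\infty$, so $1/a'(t)\to\infty$ but $1/a'(t)\prec t$. Hence $1/a'$ is a Hardy field function of polynomial growth tending to infinity, and Lemma~\ref{L:properties}(i) applied to $1/a'$ yields $(1/a')'\ll (1/a')/t$, i.e.\ $a''/a'^2\ll 1/(ta')\to 0$. Via $b=a^{-1}$, a direct computation gives $b''(s)/b'(s)=-(a''/a'^2)(b(s))\to 0$, so by the mean value theorem $b'(u_1)/b'(u_2)\to 1$ whenever $u_1,u_2$ vary in an interval of bounded length inside $[m_k,\infty)$ as $k\to\infty$.

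Given $\varepsilon>0$, the uniform Ces\`aro hypothesis supplies an integer $K$ such that $\Bigl\|\sum_{j=0}^{K-1}A(I+j)\Bigr\|\leq K\varepsilon$ for all sufficiently large $I$. I would then partition (a cofinite subset of) $[1,N]$ into blocks $[n_k,n_{k+1})$ on which $[a(n)]$ takes exactly the $K$ consecutive values $\{m_k,m_k+1,\dots,m_k+K-1\}$; this is possible because $[a(n+1)]-[a(n)]\in\{0,1\}$ eventually. Setting $L_k=(n_{k+1}-n_k)/K$, the slow variation above gives $\max_{0\leq j<K}|w_{m_k+j}/L_k-1|\to 0$ as $k\to\infty$, and consequently
\[
\Bigl\|\sum_{n=n_k}^{n_{k+1}-1}A([a(n)])\Bigr\|\ \leq\ L_k\Bigl\|\sum_{j=0}^{K-1}A(m_k+j)\Bigr\|+(n_{k+1}-n_k)\,o(1)\ \leq\ (n_{k+1}-n_k)(\varepsilon+o(1)).
\]
Summing over $k$ gives $\bigl\|\sum_{n=1}^{N}A([a(n)])\bigr\|\leq N(\varepsilon+o(1))$, so $\limsup_N\bigl\|N^{-1}\sum_{n=1}^N A([a(n)])\bigr\|\leq\varepsilon$, and letting $\varepsilon\to 0$ finishes the proof.

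The main obstacle is the slow-variation claim $a''/a'^2\to 0$: this is precisely where the hypothesis $\log t\prec a$ is needed, and without it (e.g.\ $a(t)=\log t$, which yields $b(t)=e^t$ with $b'(m+1)/b'(m)=e$) the multiplicities $w_m$ would not be comparable on neighboring integers and the block-averaging argument would break down. Once this analytic fact is in place, the rest of the argument is a routine summation-by-parts type estimate, handled cleanly by the partition into blocks of $K$ consecutive values of $[a(\cdot)]$.
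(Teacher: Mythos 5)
Your argument is correct, and it reaches the conclusion by a genuinely different route than the paper. Both proofs begin by grouping the sum according to the value $m=[a(n)]$, introducing the multiplicities $w(m)$ and the inverse $b=a^{-1}$. At that point the paper verifies the two qualitative facts $w(n)\to\infty$ and $w(n)/W(n)\to 0$ (where $W(N)=\sum_{n\le N}w(n)$) and then invokes Theorem~3.6 of \cite{BH09} as a black box to conclude that the uniform Ces\`aro hypothesis on $(A(n))$ propagates to the $w$-weighted averages. You instead make the averaging principle self-contained: you establish the stronger slow-variation statement $a''/(a')^2\to 0$ (equivalently $b''/b'\to 0$) by feeding $1/a'$ into Lemma~\ref{L:properties}(i) — this is exactly the point where $\log t\prec a$ is used, via $ta'(t)\to\infty$ — and then deduce that consecutive multiplicities $w(m),\dots,w(m+K-1)$ are asymptotically equal. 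Partitioning $[1,N]$ into blocks of $K$ consecutive integer values of $[a(\cdot)]$, replacing the nearly equal weights by their block average $L_k$, and applying the uniform Ces\`aro bound on each window of length $K$ then gives $\|\sum_{n\le N}A([a(n)])\|\le N(\varepsilon+o(1))$. The edge effects (first few blocks, and the partial final block, whose size is $O(K/a'(N))=o(N)$ since $Na'(N)\to\infty$) are harmless. Your version buys transparency and independence from the reference, at the cost of a slightly longer computation; the paper's version is shorter but opaque, relying on a theorem whose hypotheses (in particular, that the weights vary regularly, not merely that $w(n)/W(n)\to 0$) are precisely what your slow-variation step makes explicit.
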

\begin{remark}
When $t^\varepsilon \prec a(t)\prec 1$ for some $\varepsilon>0$, the
conclusion holds under the weaker assumption $
\lim_{N\to\infty}\norm{\frac{1}{N}\sum_{n=1}^{N} A(n)}=0 $.
\end{remark}
\begin{proof}
Letting $w(n)=\{k\in \N\colon [a(k)]=n\}$ and $W(N)=w(1)+\cdots
+w(N)$, it suffices to show that $
\lim_{N\to\infty}\norm{\frac{1}{W(N)}\sum_{n=1}^N w(n)\cdot
A(n)}=0.$ Letting $b(t)=a^{-1}(t)$, one  checks that $w(n)/
(b(n+1)-b(n))\to 1$ and $W(n)/b(n)\to 1$. Our assumptions give that
$\log(b(t))\prec t\prec b(t)$. This implies that $b(t+1)-b(t)\to
\infty$ and $(b(t+1)-b(t))/b(t)\to 0$. Hence, $w(n)\to \infty$ and
$w(n)/W(n)\to 0$. The needed convergence to $0$ now follows from
Theorem~3.6 in \cite{BH09}.
\end{proof}

 We are now in position  to
prove Proposition~\ref{P:CharB2}. Given  a Hardy field $\H$ and
functions  $a,b\in \cG\cap \H$ our goal is to establish the
following claim:

\medskip\noindent\textbf{Claim:} Let  $a_i\in \cF(a),$
   $b_i \in \cF(b) $  for $i=1,\ldots,m$, and   $(\A,\B)$ be a nice family of
ordered pairs of functions where $\A\=(a_1,\ldots,a_m)$,
$\B\=(b_1,\ldots,b_m)$. Let $W$ be the
  matrix type of  this family.
 Then there exists $k=k(W,m)\in \N$ such that: If
$\nnorm{f_1}_{k,T_1}=0$, then  the averages
\begin{equation}\label{E:fgh'}
\frac{1}{N}\sum_{n=1}^N \prod_{i=1}^m (T_1^{[a_i(n)]}
T_2^{[b_i(n)]})f_i
\end{equation}
converge to $0$  in $L^2(\mu)$.

Note that the conclusion of Proposition~\ref{P:CharB2} is somewhat
stronger in two respects: $(i)$ The integer $k$ depends  only on the
degree of the family. This strengthening easily follows from the
above mentioned claim after noticing that there is only a finite
number of possible matrix types for families that have fixed degree
and numbers of pairs of functions. $(ii)$ The conclusion  involves a
supremum over
 all subsets of $\N$.
 This strengthening follows  by combining the above mentioned
 statement with
Lemma~\ref{L:ProdTrick} and the fact that $\nnorm{f}_{k+1,T}=0$
implies that $\nnorm{f\otimes \bar{f}}_{k,T\times T}=0$ (this
follows from   \eqref{E:seminonergodic}).

We  proceed to prove the  claim  by induction on the type of the
nice family  $(\A,\B)$.

\medskip\noindent\textbf{Base Case:}
Suppose that  $\deg (a_1)=0$, in which case, for $i=1,\ldots, m$ the
functions $a_i$ and $b_i$   have sub-linear growth. We are going to
show that if $\nnorm{f_1}_{2m+1,T_1}=0$, then the averages
\eqref{E:fgh'} converge to $0$ in $L^2(\mu)$.

Our assumption implies that  for $i=2,\ldots, m$ one has
$$
a_i(t)=\alpha_i a_1(t)+c_i(t)
$$ for some $\alpha_i\in \R$ and functions $c_i$ that satisfy $c_i\prec a_1$. It is important to note that $\alpha_i\neq
1$ for $i=2,\ldots m$. Otherwise $a_1-a_i\prec a_1$, and since
$a_1-a_i\in \cF(a)$ and $\deg (a_1)=0$, we deduce by
Lemma~\ref{L:Key1} that $a_1-a_i\to 0$, contradicting our assumption
that the family $(\A,\B)$ is nice.
 Let
$$
 \tilde{b}_i\=b_i\circ a_1^{-1}, \quad \tilde{c}_i\=c_i\circ a_1^{-1}.
 $$
 (We caution the reader that these functions are not necessarily
 Hardy field functions.)
 Since $b_i\prec a_1$ and $c_i\prec a_1$ we have  $\tilde{b}_i\prec 1$ and  $\tilde{c}_i\prec 1$.
Furthermore, one sees that
$$
[a_i(n)]=[\alpha_i[a_1(n)]] +[\tilde{c}_i([a_1(n)])]+e_{i}(n),\quad
[b_i(n)]=[\tilde{b}_i([a_1(n)])]+e'_{i}(n),$$ where the sequences
$(e_{i}(n))$, $(e'_{i}(n))$ take finitely many integer values.
Therefore, it suffices to show that the averages in $n$ of
$$
(T_1^{[a_1(n)]} T_2^{[\tilde{b}_1([a_1(n)])] +e'_1(n)})f_1\cdot
\prod_{i=2}^m (T_1^{[\alpha_i[a_1(n)]]
+[\tilde{c}_i([a_1(n)])]+e_i(n)} T_2^{[\tilde{b}_i([a_1(n)])]
+e'_i(n)})f_i
$$
converge to $0$  in $L^2(\mu)$.

By Lemma~\ref{L:ProdTrick'} it suffices to show that  the averages
in $n$ of
$$
(\tilde{T}_1^{[a_1(n)]}
\tilde{T}_2^{[\tilde{b}_1([a_1(n)])]})\tilde{f}_1\cdot \prod_{i=2}^m
(\tilde{T}_1^{[\alpha_i[a_1(n)]] +[\tilde{c}_i([a_1(n)])]}
\tilde{T}_2^{[\tilde{b}_i([a_1(n)])]})\tilde{f}_i
$$
converge to $0$  in $L^2(\tilde{\mu})$ for all $\tilde{f}_i\in
L^\infty(\tilde{\mu}), i=2,\ldots, m$, where $\tilde{T}\=T\times T$,
 $\tilde{\mu}\=\mu \times \mu$, and
$\tilde{f}\=f\otimes \bar{f}$. Using Lemma~\ref{L:ChangeVar} we can
further reduce matters to showing that   for every   sequence
$(I_N)$ of intervals of integers with lengths increasing to
infinity, the averages
$$
\frac{1}{|I_N|}\sum_{n\in
I_N}(\tilde{T}_1^n\tilde{T}_2^{[\tilde{b}_1(n)]}) \tilde{f}_1\cdot
\prod_{i=2}^m(\tilde{T}_1^{[\alpha_i
n]+[\tilde{c}_i(n)]}\tilde{T}_2^{[\tilde{b}_i(n)]}) \tilde{f}_i
$$
converge to $0$  in $L^2(\tilde{\mu})$ as $N\to\infty$.

Using our assumptions, one easily sees  that the functions
$\tilde{c}_i(t+1)-\tilde{c}_i(t)$ and
$\tilde{b}_i(t+1)-\tilde{b}_i(t)$ converge to $0$ and have
eventually constant sign. Because of this,  it is possible to
decompose each interval $I_N$ (except a finite set with fixed
cardinality) into sub-intervals with length tending to infinity, and
such that for every $N\in \N$ the sequences
$([\tilde{c}_2(n)]),\ldots, ([\tilde{c}_m(n)])$ and
$([\tilde{b}_1(n)]), \ldots, ([\tilde{b}_m(n)])$ are constant on
each interval. Thus, without loss of generality we can assume that
all these sequences are constant in each interval $I_N$.  Then the
desired fact would follow if we prove that the averages
$$
\frac{1}{|I_N|}\sum_{n\in I_N}\tilde{T}_1^n  \tilde{f}_1\cdot
\prod_{i=2}^m (\tilde{T}_1^{[\alpha_i n]
+c_{i,N}}\tilde{T}_2^{d_{i,N}}) \tilde{f}_i
$$
converge to $0$ in $L^2(\tilde{\mu})$ as $N\to \infty$, for every
choice of integers  $c_{i,N}, d_{i,N}$. This follows form
Lemma~\ref{L:linear} and the fact that $\nnorm{f_i}_{2m+1,T_i}=0$
implies that $\nnorm{\tilde{f}_i}_{2m,\tilde{T_i}}=0$.

\medskip\noindent\textbf{Inductive step:}
Let  now $(\A,\B)$ be a nice family of $m$ ordered pairs of
functions, of matrix type $W$,  and such that $\deg (a_1)\geq 1$.
Suppose that the statement we want  to prove holds for every nice
family of $2m$ ordered pairs of functions with  matrix type $W'$
strictly less than $W$ (there is a finite number of such families),
and let $k(W',2m)$ be the integer for which the conclusion of the
corresponding statement holds. We let
$k(W,m)=\max_{W'<W}(k(W',2m))+1$. Our goal is to  show that $k(W,m)$
works for the family $(\A,\B)$. Since in the  base case we covered
all nice families with degree $0$, this is going to complete the
induction.

So assuming that $\nnorm{f_1}_{k(W,m),T_1}=0$, we want to show that
the averages \eqref{E:fgh'} converge to $0$ in $L^2(\mu)$. By
Lemma~\ref{L:N-VDC} it suffices to show that for large enough $h\in
\N$ the averages in $n$ of
$$
\int \prod_{i=1}^m (T_1^{[a_i(n+h)]} T_2^{[b_i(n+h)]})f_i \cdot
 (T_1^{[a_i(n)]} T_2^{[b_i(n)]})\bar{f}_i \ d\mu
$$
converge to $0$. We compose with
$T_1^{-[\tilde{a}(n)]}T_2^{-[\tilde{b}(n)]}$, where
$(\tilde{a},\tilde{b})\in (\P,\Q)$ is chosen as in
Lemma~\ref{L:reduceA}, and use the Cauchy-Schwarz inequality.
  This  reduces matters to showing that  for every   large enough
$h\in \N$  the averages in $n$ of
$$
\prod_{i=1}^m (T_1^{[a_i(n+h)-\tilde{a}(n)]+e_{1,i}(n)}
T_2^{[b_i(n+h)-\tilde{b}(n)]+e_{2,i}(n)})f_i \cdot
(T_1^{[a_i(n)-\tilde{a}(n)]+e_{3,i}(n)}
T_2^{[b_i(n)-\tilde{b}(n)]+e_{4,i}(n)})\bar{f}_i
$$
 converge to $0$ in $L^2(\mu)$ where $e_{i,j}$ are sequences that take values in the set $\{0,1\}$.
 By Lemma~\ref{L:ProdTrick'} it
 suffices to show that the averages in $n$ of
 \begin{equation}\label{E:pop}
\prod_{i=1}^m (\tilde{T}_1^{[a_i(n+h)-\tilde{a}(n)]+c_{1,i}}
\tilde{T}_2^{[b_i(n+h)-\tilde{b}(n)]+c_{2,i}})\tilde{f}_i \cdot
(\tilde{T}_1^{[a_i(n)-\tilde{a}(n)]+c_{3,i}}
\tilde{T}_2^{[b_i(n)-\tilde{b}(n)]+c_{4,i}})\bar{\tilde{f}}_i
\end{equation}
 converge to $0$ in $L^2(\tilde{\mu})$, where, $c_{i,j}$ are constants
 with values either $0$ or $1$,
  $c_{1,1}=c_{2,1}=0$,
  and  $\tilde{T}\=T\times T$,
 $\tilde{\mu}\=\mu \times \mu$,
$\tilde{f}\=f\otimes \bar{f}$.
 We remove the functions that happen
to be composed with eventually constant iterates of  $T_1$ and $T_2$
(this will happen when the functions involved are bounded), since
they do not affect convergence to $0$. This corresponds to the
operation $^\ast$ defined in Section~\ref{subsec:vdcoperation}, and
the resulting multiple ergodic averages  are associated with  the
families of functions
 $(\tilde{a},\tilde{b},h)\vdc(\A,\B)$. Our final   goal is to show that these averages
 convergence to $0$ in $L^2(\mu)$ for every large enough  $h\in\N$.

 By Lemma~\ref{L:reduceA}, for every large enough  $h\in\N$,  the
family $(\tilde{a},\tilde{b},h)\vdc(\A,\B)$ is nice, has type $W'$
strictly smaller than $W$, and its first pair is
 $([a_1(n+h)-\tilde{a}(n)],[b_1(n+h)-\tilde{b}(n)])$.
Notice also that in \eqref{E:pop}  the iterate
$T_1^{[a_1(n+h)-\tilde{a}(n)]}T_2^{[b_1(n+h)-\tilde{b}(n)]}$ is
applied to the function $\tilde{f}_1$. Since
$\nnorm{f_1}_{k(W,m),T_1}=0$ implies that
$\nnorm{\tilde{f}_1}_{k(W',2m),\tilde{T}_1}=0$, the induction
hypothesis applies and proves convergence to $0$ in $L^2(\mu)$.
 This completes the proof of Proposition~\ref{P:CharB2}.

\section{Seminorm estimates for the
highest degree iterate:  The general case}\label{S:Char}

The next proposition is the generalization of
Proposition~\ref{P:CharB2} to the case of an arbitrary number of
transformations. To avoid unnecessary repetition, we
 define the concepts needed in the proof of  Proposition~\ref{P:CharB},
and then only summarize its proof providing details only when
non-trivial modifications of the arguments used in the previous
section are needed.

\begin{proposition}\label{P:CharB}
 Let $(X,\X,\mu,T_1,\ldots,T_\ell)$ be a system,
and $f_1,\ldots,f_m\in L^\infty(\mu)$. Suppose that
$(\A_1,\ldots,\A_\ell)$  is a nice  ordered  family of $\ell$-tuples
of functions with degree $d$ (all notions are defined below). Then
there exists $k=k(d,\ell,m)\in \N$   such that: If
$\nnorm{f_1}_{k,T_1}=0$, then
$$
\lim_{N\to\infty} \sup_{E\subset \N} \norm{
\frac{1}{N}\sum_{n=1}^{N}
 \prod_{i=1}^m (T_1^{[a_{1,i}(n)]}\cdots
 T_\ell^{[a_{\ell,i}(n)]})f_i \cdot {\bf 1}_E(n)}_{L^2(\mu)}=0.
$$
\end{proposition}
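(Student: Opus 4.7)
The plan is to follow the template of Proposition~\ref{P:CharB2} with definitions and operations adapted to ordered families of $\ell$-tuples of functions. I would call $(\A_1,\ldots,\A_\ell)$ a \emph{nice family} if there exist $a_1,\ldots,a_\ell\in\cG\cap\H$ with $a_{j,i}\in\cF(a_j)$ for all $i,j$, subject to the $\ell$-dimensional analogues of the defining conditions of Definition~\ref{D:good}: namely $a_{1,1}-a_{1,i}\succ 1$ and $a_{1,i}\ll a_{1,1}$ for $i\geq 2$; $a_{j,i}\prec a_{1,1}$ for $j\geq 2$; and $a_{j,1}-a_{j,i}\prec a_{1,1}-a_{1,i}$ for $j\geq 2$, $i\geq 2$. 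The \emph{type} becomes an $\ell\times(d+1)$ matrix $(w_{j,i})$ ordered lexicographically row by row from the top, where $w_{j,i}$ counts non-equivalent classes of degree-$i$ functions in the set $\A'_j$ consisting of those $a_{j,k}$ that are unbounded while the companions $a_{1,k},\ldots,a_{j-1,k}$ are all bounded. The van der Corput operation $(\tilde a_1,\ldots,\tilde a_\ell,h)\vdc(\A_1,\ldots,\A_\ell)$ replaces each $\ell$-tuple $(a_{1,i},\ldots,a_{\ell,i})$ by the pair of tuples $(S_ha_{j,i}-\tilde a_j)_{j=1}^\ell$ and $(a_{j,i}-\tilde a_j)_{j=1}^\ell$, and then deletes tuples consisting entirely of bounded functions.

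The main technical step, generalizing Lemma~\ref{L:reduceA}, is to show that whenever $\deg(a_{1,1})\geq 1$ one can pick $(\tilde a_1,\ldots,\tilde a_\ell)$ so that for every sufficiently large $h\in\N$ the vdC-reduced family is nice and has strictly smaller type. Let $j_0$ be the largest index for which $\A'_{j_0}$ is non-empty; since $a_{1,1}\in\A'_1$, we have $j_0\geq 1$. If $j_0\geq 2$, I set $\tilde a_j=0$ for all $j\neq j_0$ and choose $\tilde a_{j_0}\in\A'_{j_0}$ of minimal degree: then the first $j_0-1$ rows of the matrix type are preserved (the $j$-th slot becomes $S_ha_{j,k}$ or $a_{j,k}$, whose boundedness status matches that of the original $a_{j,k}$); the $j_0$-th row is reduced by the same case analysis that handled the second row in Lemma~\ref{L:reduceA}; and rows $j>j_0$ become zero rows because any old tuple contributing to $w_{j,\cdot}$ for $j>j_0$ had a bounded $a_{j_0,k}$, so after subtraction the $j_0$-th slot becomes unbounded (since $\tilde a_{j_0}$ is unbounded), disqualifying the new tuple from $\A'_j$. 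If $j_0=1$, then all rows $j\geq 2$ are already zero, and I proceed exactly as in Lemma~\ref{L:reduceA}, choosing $(\tilde a_1,\ldots,\tilde a_\ell)$ from $(\A_1,\ldots,\A_\ell)$ with $\tilde a_1$ either $a_{1,1}$ or a minimal-degree non-equivalent element of $\A'_1$, the verification of niceness being a routine application of Lemma~\ref{L:properties} and Lemma~\ref{L:KeyHardy} slot by slot.

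With this reduction lemma and the well-foundedness of the lexicographic order on $\ell\times(d+1)$ matrices (the analogue of Lemma~\ref{lem:decreasing}), the proposition follows by induction on the matrix type. For the \emph{base case} $d=0$, where all iterates have sub-linear growth, one argues exactly as in Proposition~\ref{P:CharB2}: write $a_{j,i}(t)=\alpha_{j,i}a_{1,1}(t)+c_{j,i}(t)$ with $c_{j,i}\prec a_{1,1}$, observe that $\alpha_{1,i}\neq 1$ for $i\geq 2$ by niceness, apply Lemma~\ref{L:ChangeVar} to switch to the variable $[a_{1,1}(n)]$, use Lemma~\ref{L:ProdTrick'} to absorb the bounded integer error terms by moving to the product system $(X\times X,\mu\times\mu,T_1\times T_1,\ldots,T_\ell\times T_\ell)$, partition into sub-intervals on which all $T_j$ ($j\geq 2$) iterates are constant, and invoke the single-transformation Lemma~\ref{L:linear} together with \eqref{E:seminonergodic} to control the result by $\nnorm{f_1}_{2m+1,T_1}$. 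For the \emph{inductive step}, apply the qualitative van der Corput Lemma~\ref{L:N-VDC} to reduce to inner-product averages, use Cauchy--Schwarz after composing with $T_1^{-[\tilde a_1(n)]}\cdots T_\ell^{-[\tilde a_\ell(n)]}$ to land in averages governed by $(\tilde a_1,\ldots,\tilde a_\ell,h)\vdc(\A_1,\ldots,\A_\ell)$, invoke Lemma~\ref{L:ProdTrick'} to absorb the $\{0,1\}$-valued roundoff errors arising from splitting $[a_{j,i}(n+h)-\tilde a_j(n)]$, and apply the inductive hypothesis to the strictly smaller-type family of $2m$ tuples on the product system with $f_1\otimes\bar f_1$ in place of $f_1$; the required $\nnorm{f_1\otimes\bar f_1}_{k-1,T_1\times T_1}=0$ is guaranteed by $\nnorm{f_1}_{k,T_1}=0$ via \eqref{E:seminonergodic}. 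The main obstacle lies in the reduction lemma itself: the precise bookkeeping needed to verify that the chosen $(\tilde a_1,\ldots,\tilde a_\ell)$ simultaneously preserves niceness and strictly decreases the matrix type, together with the careful tracking of how the definition of $\A'_j$ interacts with the van der Corput operation in every row, is where the $\ell$-dimensional structure genuinely adds complications over the case $\ell=2$.
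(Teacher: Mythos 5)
Your proposal is correct and follows essentially the same route as the paper: the paper defines nice families of $\ell$-tuples, the $\ell\times(d+1)$ matrix type with lexicographic row-by-row order, and the vdC operation exactly as you do, proves the type-reducing Lemma~\ref{L:reduceA'} by distinguishing the largest index $j_0$ with $\A_{j_0}'$ non-empty (setting the earlier coordinates of the vdC pivot to $0$ and taking a minimal-degree element of $\A_{j_0}'$, with the $j_0=1$ case handled as in Lemma~\ref{L:reduceA}), and then explicitly declares both the degree-$0$ base case and the inductive step to be "completely analogous to the case $\ell=2$" and omits them. Your filling-in of those omitted steps — Lemma~\ref{L:ChangeVar} plus Lemma~\ref{L:ProdTrick'} plus Lemma~\ref{L:linear} for the base case, and Lemma~\ref{L:N-VDC}, composition with $T_1^{-[\tilde a_1(n)]}\cdots T_\ell^{-[\tilde a_\ell(n)]}$, Lemma~\ref{L:ProdTrick'}, and inequality \eqref{E:seminonergodic} for the inductive step — is precisely what the paper does in Section~\ref{S:Char2}, so this is the intended argument rather than an alternative one.
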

Applying this result to the nice family $(\A_1,\ldots,\A_\ell)$
where $\A_1\=(a_1,0,\ldots,0)$, $\A_2\=(0,a_2,\ldots,0)$, ...
$\A_\ell\=(0,\ldots,0,a_\ell)$,  we get:
\begin{proposition}
\label{P:CharBspecial}
 Let $(X,\X,\mu,T_1,\ldots,T_\ell)$ be a system,
and $f_1,\ldots,f_\ell\in L^\infty(\mu)$ be functions. Let $\H$ be a
Hardy field and  $a_1,\ldots,a_\ell\in \cG\cap \H$ be functions with
different growth and highest degree $d\=\deg(a_1)$. Then there
exists $k=k(d,\ell)$ such that: If $\nnorm{f_1}_{k,T_1}=0$, then the
averages
$$
\frac{1}{N}\sum_{n=1}^{N} \prod_{i=1}^\ell T_i^{[a_i(n)]}f_i
$$
converge to $0$ in $L^2(\mu)$.
\end{proposition}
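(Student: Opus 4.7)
The plan is to prove Proposition~\ref{P:CharBspecial} by deriving it as a special case of Proposition~\ref{P:CharB}, exactly as the author indicates: apply Proposition~\ref{P:CharB} to the family $\A_j = (0,\ldots,0,a_j,0,\ldots,0)$ with $a_j$ in the $j$-th slot. So the real work is to generalize the entire machinery of Section~\ref{S:Char2} from two transformations to $\ell$, which I would do by adapting three things in parallel: the combinatorial type, the notion of niceness, and the van der Corput reduction lemma, while keeping the inductive skeleton of the proof of Proposition~\ref{P:CharB2} intact.

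First, I would replace the $2\times(d+1)$ matrix type by an $\ell\times(d+1)$ matrix type. Writing an $\ell$-tuple in $(\A_1,\ldots,\A_\ell)$ as $\a_i=(a_{1,i},\ldots,a_{\ell,i})$, the $j$-th row would count equivalence classes of degree-$k$ functions appearing in the $j$-th coordinate, restricted to those tuples whose earlier coordinates $a_{1,i},\ldots,a_{j-1,i}$ are all bounded, mirroring the sets $\A'$ and $\B'$ defined in \eqref{E:A'}--\eqref{E:B'}. Lexicographic ordering across rows (top row first, then within each row highest degree first) still makes Lemma~\ref{lem:decreasing} hold. Niceness would require that $a_{1,1}$ strictly dominates the other entries $a_{1,i}$ with growing gaps, that entries in rows $j\geq 2$ are $\prec a_{1,1}$, and that successive differences $a_{j,1}-a_{j,i}\prec a_{j-1,1}-a_{j-1,i}$ in a nested fashion; the van der Corput operation subtracts an $\ell$-tuple $(\tilde a_1,\ldots,\tilde a_\ell)$ coordinatewise from every $\ell$-tuple and its shift by $h$, then deletes any $\ell$-tuple of bounded functions.

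Second, I would prove the crucial generalization of Lemma~\ref{L:reduceA}: given a nice family of $\ell$-tuples with $\deg(a_{1,1})\geq 1$, there is a choice of $(\tilde a_1,\ldots,\tilde a_\ell)$ drawn from the family (with zeros allowed in upper slots) such that for all large $h\in\N$ the resulting family is nice and has strictly smaller type. The selection rule generalizes the one in Lemma~\ref{L:reduceA}: scan the matrix type from the bottom row upward, locate the lowest row with a non-trivial entry of smallest degree, and use that coordinate for the reduction; this keeps the rows above unchanged while strictly decreasing the chosen row. The verification of the three niceness axioms uses Lemmas~\ref{L:Key1} and \ref{L:KeyHardy} applied separately to each coordinate and is the same calculation as in the proof of Lemma~\ref{L:reduceA}, just repeated row-by-row.

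Third, the induction on matrix type proceeds exactly as in Proposition~\ref{P:CharB2}. The base case is $\deg(a_{1,1})=0$: every iterate has sub-linear growth, so writing $a_{j,i}=\alpha_{j,i}\,a_{1,1}+c_{j,i}$ with $c_{j,i}\prec a_{1,1}$ (where niceness forces $\alpha_{j,1}\neq\alpha_{j,i}$ for $i\geq 2$ on the first nontrivial row), applying Lemma~\ref{L:ProdTrick'} to absorb bounded errors, Lemma~\ref{L:ChangeVar} to change variables via $a_{1,1}^{-1}$, and a straightforward $\ell$-transformation version of Lemma~\ref{L:linear} (proved by the same successive van der Corput argument) yields the seminorm estimate. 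The inductive step applies Lemma~\ref{L:N-VDC}, composes inside the integral with $T_1^{-[\tilde a_1(n)]}\cdots T_\ell^{-[\tilde a_\ell(n)]}$ and Cauchy--Schwarz, then passes to $\tilde T_j = T_j\times T_j$ via Lemma~\ref{L:ProdTrick'}; by \eqref{E:seminonergodic}, $\nnorm{f_1}_{k,T_1}=0$ implies $\nnorm{\tilde f_1}_{k-1,\tilde T_1}=0$, and the induction hypothesis applied to the reduced-type family $(\tilde a_1,\ldots,\tilde a_\ell,h)\vdc(\A_1,\ldots,\A_\ell)$ finishes the step.

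The main obstacle I anticipate is purely combinatorial: verifying that the chosen $(\tilde a_1,\ldots,\tilde a_\ell)$ yields a nice family for all large $h$. With $\ell$ rows, one has many more differences to control simultaneously, and the case split between $\tilde a_j\cong a_{j,1}$ and $\tilde a_j\ncong a_{j,1}$ in each row interacts with which rows above are trivially zero. The Hardy field estimates needed are no harder than those in the two-transformation case, but the bookkeeping of which niceness inequality in which row follows from which assumption requires care and is the only genuinely new work beyond Section~\ref{S:Char2}.
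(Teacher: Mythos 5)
Your overall strategy is correct and is exactly what the paper does: Proposition~\ref{P:CharBspecial} is obtained by applying Proposition~\ref{P:CharB} to the family with $\A_j=(0,\ldots,0,a_j,0,\ldots,0)$ ($a_j$ in the $j$-th slot), and Proposition~\ref{P:CharB} in turn is proved by lifting the $\ell\times(d+1)$ matrix type, niceness, the van der Corput operation, and the induction on type from Section~\ref{S:Char2}. Your descriptions of the type, of the sets $\A_i'$ (row $i$ entries whose earlier coordinates are bounded), of the type-reduction selection rule (bottom-most non-empty row, minimal degree, with a case split when it is row $1$), and of the base case and inductive step all match the paper's Section~\ref{S:Char}.

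However, there is a concrete error in your proposed definition of niceness. You state axiom (3) as a chain of adjacent rows, $a_{j,1}-a_{j,i}\prec a_{j-1,1}-a_{j-1,i}$ ``in a nested fashion.'' The correct condition compares every row $j\geq 2$ to row $1$ only: $a_{j,1}-a_{j,i}\prec a_{1,1}-a_{1,i}$. Your stronger chain condition is not satisfied by the very family you must apply Proposition~\ref{P:CharB} to. For the diagonal family $\A_j=(0,\ldots,0,a_j,0,\ldots,0)$, take $j\geq 3$ and $i=j$: your condition demands $a_{j,1}-a_{j,j}\prec a_{j-1,1}-a_{j-1,j}$, i.e.\ $-a_j\prec 0$, which is false (indeed ill-defined). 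With this niceness definition the initial family is not nice and the induction never starts. Replacing the chain by the row-$1$ comparison fixes this; with that change your sketch follows the paper's argument.

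(A minor secondary point: in the degree-$0$ base case you do not actually need an $\ell$-transformation version of Lemma~\ref{L:linear}. After the change of variables by $a_{1,1}^{-1}$ all iterates of $T_2,\ldots,T_\ell$ become locally constant on the subintervals and can be absorbed into the functions, so the single-transformation Lemma~\ref{L:linear} suffices, just as in the $\ell=2$ case.)
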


\subsection{Families of $\ell$-tuples and their types}
\label{SS:4.1}
\subsubsection{Families of $\ell$-tuples of functions}
Let $\ell,m\in\N$. Given $\ell$ ordered families of functions
$$
\A_1\=(a_{1,1},\ldots,a_{1,m}) ,\ldots,
\A_\ell\=(a_{\ell,1},\ldots,a_{\ell,m})
$$
we define an \emph{ordered family of $\ell$-tuples of functions} as
follows
$$
(\A_1,\ldots,\A_\ell)\=\big((a_{1,1},\ldots,a_{\ell,1}),\ldots,(a_{1,m},\ldots,a_{\ell,m})\big).
$$
The maximum of the degrees of the functions in the families
$\A_1,\ldots,\A_\ell$ is called \emph{the degree of  the family}
$(\A_1,\ldots,\A_\ell)$.

For convenience of exposition, if $\ell$-tuples of bounded functions
appear in $(\A_1,\ldots,\A_\ell)$ we remove them, and henceforth we
assume:

\emph{All families  $(\A_1,\ldots, \A_\ell)$  that we consider do
not contain $\ell$-tuples of bounded functions.}

\subsubsection{Definition of type}
We fix $d\geq 0$ and restrict ourselves to families of degree
between $0$ and $d$. We define
$$
\A_1'\=\{  a_{1,j} \in \A_1\colon a_{1,j} \text{ is not bounded }\}
$$
and for $i=2,\ldots, \ell$
$$
\A_i'\=\{a_{i,j} \in \A_i\colon a_{i,j} \text{ is not bounded and }
 a_{i',j} \text{ is
bounded for } i'<i\}.
$$

For $i=1,\ldots, \ell$ and $j=0,1,\ldots, d$, we  let $w_{i,j}$ be
the number of distinct non-equivalent classes of  functions  of
degree $j$ in the family  $\A_i'$. We define the \emph{(matrix)
type} of the family $(\A_1,\ldots, \A_\ell)$ to be the matrix
$$
\begin{pmatrix}
w_{1,d}& \ldots &  w_{1,0}\\ w_{2,d}& \ldots& w_{2,0}\\ \vdots & \ldots &\vdots \\
w_{\ell,d}& \ldots& w_{\ell,0}
\end{pmatrix}.
$$


As in Section~\ref{SS:type}, we order these  types
lexicographically.
 The following extension of Lemma~\ref{lem:decreasing} holds:
\begin{lemma}
\label{lem:decreasing2} Every decreasing sequence of
 types of families of  $\ell$-tuples   is stationary.
\end{lemma}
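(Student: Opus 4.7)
Since the degree has been fixed at $d$, every type under consideration is a matrix in $\N^{\ell\times(d+1)}$, which we flatten to a vector in $\N^{k}$ with $k=\ell(d+1)$ by reading the entries row by row (so the top-left entry is read first and the bottom-right entry last). Under this identification the lexicographic order on types introduced in Section~\ref{SS:4.1} becomes the standard lexicographic order on $\N^{k}$ with the first coordinate being the most significant. The lemma is thus equivalent to the statement that the lexicographic order on $\N^{k}$ is a well-order, i.e.\ every weakly decreasing sequence in it is eventually constant.

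I would prove the latter by induction on $k$. The base case $k=1$ is the well-ordering of $\N$: a weakly decreasing sequence of non-negative integers is eventually constant. For the inductive step, given a decreasing sequence $(v_n)$ in $\N^{k}$, write $v_n=(v_n^{(1)},w_n)$ with $v_n^{(1)}\in\N$ and $w_n\in\N^{k-1}$. Because the lexicographic order compares the first coordinate first, the sequence $(v_n^{(1)})$ is weakly decreasing in $\N$ and so stabilizes at some value $c$ for all $n\geq n_0$. Beyond index $n_0$ the relation $v_{n+1}\leq_{\mathrm{lex}}v_n$ together with equal first coordinates forces $w_{n+1}\leq_{\mathrm{lex}}w_n$, so $(w_n)_{n\geq n_0}$ is decreasing in $\N^{k-1}$ and, by the inductive hypothesis, eventually constant; hence $(v_n)$ is eventually constant.

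The argument is a direct extension of the proof of Lemma~\ref{lem:decreasing}, with $\ell$ rows in place of $2$; the extra rows play no role in the well-foundedness argument, and in fact there is no genuine obstacle here---the only substantive content is the observation that, because $d$ is fixed, the space of possible types is a finite-dimensional product of well-ordered sets endowed with the lexicographic order, and any such product is itself well-ordered.
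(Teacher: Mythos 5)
Your proof is correct, and since the paper states both Lemma~\ref{lem:decreasing} and Lemma~\ref{lem:decreasing2} without proof, your argument is exactly the kind of elementary justification the author intends the reader to supply. The key points are all present: fixing $d$ caps the number of columns, so types live in $\N^{\ell(d+1)}$; the paper's lexicographic order (comparing $w_{1,d},w_{1,d-1},\ldots,w_{1,0},w_{2,d},\ldots$) agrees with the standard lexicographic order on the flattened vector read row by row; and the induction on $k$ establishing that a finite lexicographic product of copies of $\N$ is well-founded is the standard argument, with the stabilization of the first coordinate correctly used to reduce to the tail in $\N^{k-1}$. One remark for clarity: the lemma's ``decreasing'' should be read as ``non-increasing'' (which you implicitly do), since what is used later in the paper is that a strictly type-reducing operation terminates after finitely many steps --- precisely the contrapositive of the statement that a non-increasing sequence of types is eventually constant.
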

\subsection{Nice families and the van der Corput operation}

\subsubsection{Nice families} Henceforth, we are going to work with
families of $\ell$-tuples of functions that satisfy the following
properties:
\begin{definition}
Let $\H$ be a Hardy field,  $a_1, \ldots, a_\ell \in \cG\cap \H$ be
functions, $a_{i,j}\in \F(a_i)$ for $i=1,\ldots, \ell$, $j=1,
\ldots, m$, and $\A_1\=(a_{1,1},\ldots,a_{1,m})$,$\ldots$,
$\A_\ell\=(a_{\ell,1},\ldots,a_{\ell,m})$. We call the ordered
family $(\A_1,\ldots,\A_\ell)$  of $\ell$-tuples of functions
\emph{nice} if
\begin{enumerate}
\item\label{it:nice21}
 $a_{1,1}- a_{1,j}\succ 1$ and $a_{1,j}\ll a_{1,1}$ for $j=2,\ldots,m$;
\medskip

\item\label{it:nice22}
$a_{i,j}\prec a_{1,1}$ for $i=2,\ldots,\ell$, $j=1,\ldots,m$;

\medskip
\item\label{it:nice23}
$a_{i,1}-a_{i,j} \prec a_{1,1}-a_{1,j} $ for $i=2,\ldots,\ell$,
$j=2,\ldots,m$.
\end{enumerate}
\end{definition}

\subsubsection{The van der Corput operation}
Given a family $\A\=\big(a_1,\ldots,a_m\big)$,    a function
$a\colon [c,\infty) \to \R$, and $h\in\N$, we define $$
 S_h\A\=(S_ha_1,\ldots,S_ha_m) \text{ and }
 \A-a\=\big(a_1-a,\ldots,a_m-a\big).
 $$
  Given a family of
$\ell$-tuples of functions  $(\A_1,\ldots,\A_\ell)$,
 an $\ell$-tuple $(\tilde{a}_1,\dots,\tilde{a}_\ell)\in (\A_1,\dots\A_\ell)$, and $h\in\N$,
we define the following operation
 $$
(\tilde{a}_1,\ldots, \tilde{a}_\ell,h)\vdc(\A_1,\ldots, \A_\ell)
\=(\tilde{\A}_{1,h},\ldots \tilde{\A}_{\ell,h})^*
 $$
 where
 $$
  \tilde{\A}_{i,h}\=(S_h\A_i-\tilde{a}_i,\A_i-\tilde{a}_i).
 $$
for $i=1,\ldots,\ell$,  and  $^\ast$ is the operation that removes
all   $\ell$-tuples that consist of bounded  functions  from a given
family of
 $\ell$-tuples of functions.

\subsection{Reducing the type.}
The next lemma enables us to reduce the type of a nice family of
$\ell$-tuples that has  positive degree:
\begin{lemma}\label{L:reduceA'}
Let $(\A_1,\ldots, \A_\ell)$ be  a nice family of $\ell$-tuples of
functions with $\deg(a_{1,1})\geq 1$.
 Then there exists $(\tilde{a}_1,\ldots,\tilde{a}_\ell)\in (\A_1\cup \{0\},\ldots,\A_\ell\cup \{0\})$
 such that for every large enough $h\in\N$
the family  $(\tilde{a}_1,\ldots,\tilde{a}_\ell,h)\vdc(\A_1,\ldots,
\A_\ell)$ is nice and has strictly smaller type than
  $(\A_1,\ldots, \A_\ell)$.
\end{lemma}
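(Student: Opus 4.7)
The proof extends that of Lemma~\ref{L:reduceA} from pairs to $\ell$-tuples, reducing to the same lexicographic induction with more bookkeeping. The central task is to choose the right row of the matrix type to reduce: let $i_0\in\{1,\ldots,\ell\}$ be the \emph{largest} index with $\A_{i_0}'$ non-empty (such $i_0$ exists since $a_{1,1}\in\A_1'$).

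If $i_0\geq 2$, set $\tilde{a}_i=0$ for $i\neq i_0$ and let $\tilde{a}_{i_0}$ be a function in $\A_{i_0}'$ of minimal degree. For every $i<i_0$, the new row-$i$ entries are $S_h a_{i,j}$ and $a_{i,j}$, which lie in a common equivalence class whenever they are unbounded (by Lemma~\ref{L:properties}), so the first $i_0-1$ rows of the matrix type are preserved. In row $i_0$ itself, the equivalence class of $\tilde{a}_{i_0}$ disappears from the new $\A_{i_0}'$ by the same computation used for the second row in the pair case, yielding a strict lex decrease; possible new entries in rows $i>i_0$ are lex-irrelevant.

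If $i_0=1$, all unbounded entries live in row 1 and we mimic Case~2 of the pair case: when $\A_1=\{a_{1,1}\}$ or every $a_{1,j}\cong a_{1,1}$, take $\tilde{a}_i=a_{i,1}$ for every $i$; otherwise choose an index $j^*$ with $a_{1,j^*}\ncong a_{1,1}$ and $a_{1,j^*}$ of minimal degree in $\A_1'$, and set $\tilde{a}_i=a_{i,j^*}$. Lemmas~\ref{L:properties} and \ref{L:KeyHardy} then give the strict decrease of the first row exactly as in the pair argument.

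Niceness of the resulting family for all sufficiently large $h$ is verified as in the pair case, with the subordinate variable $b_i$ replaced by $a_{i,j}$ for each $i\geq 2$: \eqref{it:nice21} reduces to the dominance and separation of the new first-row entries, handled via Lemma~\ref{L:KeyHardy} and the fact that $a_{1,1}\succ t$; \eqref{it:nice22} follows since every entry in rows $i\geq 2$ is a difference of functions $\prec a_{1,1}$; and \eqref{it:nice23} is handled by decomposing $S_h a_{i,1}-\tilde{a}_i=(S_h a_{i,1}-a_{i,1})+(a_{i,1}-\tilde{a}_i)$ and applying Lemmas~\ref{L:properties} and \ref{L:KeyHardy}. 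The main obstacle is precisely the verification of \eqref{it:nice23}: whereas the pair case bookkeeping controls a single subordinate row, here one must simultaneously control the differences across all $\ell-1$ subordinate rows, and the case $i=i_0$ in Case~1 requires the minimality of $\deg(\tilde{a}_{i_0})$ in order to absorb the extra subtracted term and preserve the domination by $a_{1,1}-a_{1,j}$.
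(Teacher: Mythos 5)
Your proposal follows the paper's proof of Lemma~\ref{L:reduceA'} essentially verbatim: choose the largest $i_0$ with $\A_{i_0}'$ nonempty, zero out the coordinates above $i_0$ and take minimal degree in $\A_{i_0}'$ when $i_0\geq 2$, and when $i_0=1$ replicate the two subcases of Lemma~\ref{L:reduceA}; the type reduction and niceness checks then proceed by the same applications of Lemmas~\ref{L:properties} and \ref{L:KeyHardy}. Your explicit stipulation $\tilde{a}_i=0$ for $i>i_0$ and the sketch of the niceness verification (where one should decompose $S_h a_{i,1}-a_{i,j}$, not $S_h a_{i,1}-\tilde{a}_i$, as $(S_h a_{i,1}-a_{i,1})+(a_{i,1}-a_{i,j})$) is in fact slightly more detailed than the paper, which simply refers back to Lemma~\ref{L:reduceA} and omits the check.
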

\begin{proof}
  Let
$\A_i\=(a_{i,1},\ldots,a_{i,m})$ for $i=1,\ldots,\ell$.
Let $i\in \{1,\ldots, \ell\}$ be the largest integer such that the
family $\A_i'$ is non-empty. We choose $(\tilde{a}_1,\ldots,
\tilde{a}_\ell)$ as follows:
If  $i\neq 1$ (in which case $\A_\ell', \A_{\ell-1}',\ldots,
\A_{i+1}'$ are empty, and $\A_{i}'$ is non-empty), then we take
$\tilde{a}_1=\cdots=\tilde{a}_{i-1}=0$ and let $\tilde{a}_i$ to be a
function  of minimal degree in $\A_i'$. Then for every $h\in\N$, one
checks using Lemmas~\ref{L:properties} and \ref{L:KeyHardy} that the
first $i-1$ rows of the matrix type remain unchanged, and the
$i$-the row will get ``reduced'', leading to a smaller matrix type.

If   $i=1$, then  the families  $\A_\ell', \A_{\ell-1}',\ldots,
\A_{2}'$ are all empty. If $\A_1$ consists of a single function,
namely $a_{1,1}$, then we choose
$(\tilde{a}_1,\ldots,\tilde{a}_\ell)\=(a_{1,1},\ldots, a_{\ell,1})$
and the result follows  using Lemma~\ref{L:properties}. Therefore,
we can assume that $\A_1$ contains some function  other than
$a_{1,1}$. We consider two cases. If $a\cong a_{1,1}$ for all $a\in
\A_1$, then we choose
$(\tilde{a}_1,\ldots,\tilde{a}_\ell)\=(a_{1,1},\ldots, a_{\ell,1})$.
Otherwise, we choose $(\tilde{a}_1,\ldots,\tilde{a}_\ell)\in
(\A_1,\ldots,\A_\ell)$ with $\tilde{a}_1\ncong a_{1,1}$, and such
that  $\tilde{a}_1$ is a function in $\A_1'$ with minimal  degree
(such a choice exists since $a_{1,1}$ has the highest degree in
$\A_1$). In all cases, for every $h\in\N$, one checks  using
Lemmas~\ref{L:properties} and \ref{L:KeyHardy}  that  the first row
of the matrix type of
$(\tilde{a}_1,\ldots,\tilde{a}_\ell,h)\vdc(\A_1,\ldots, \A_\ell)$ is
``smaller'' than that of $(\A_1,\ldots, \A_\ell)$.

It remains to verify that for  large enough $h\in\N$ the family
 $(\tilde{a}_1,\ldots,\tilde{a}_\ell,h)\vdc(\A_1,\ldots, \A_\ell)$
 is nice. This   argument is very similar to   the one used in  Lemma~\ref{L:reduceA} and so we omit it.
\end{proof}

  \subsection{Proof of Proposition~\ref{P:CharB}}  Proposition~\ref{P:CharB}
is proved by an induction on  the type of nice families of
$\ell$-tuples of functions.  The base case covers all families with
degree $0$ and is proved in a way completely analogous to the case
$\ell=2$, that was treated in the previous section. The inductive
step is also completely analogous  to the case $\ell=2$ and is
omitted.

\section{Correlation estimates  }
 In order to motivate the estimates that are proved in this section we
 recap part of our plan for  studying the  limiting behavior of the averages
\begin{equation}
\label{E:223} \frac{1}{N}\sum_{n=1}^{N}
 T_1^{[a_1(n)]}f_1 \cdot T_2^{[a_2(n)]}f_2
\end{equation}
when $a_2\prec a_1$. We showed in Proposition~\ref{P:CharB2special}
that there exists $d\in \N$ such that if $\nnorm{f_1}_{d,T_1}=0$,
then the averages \eqref{E:223} converge to $0$ in $L^2(\mu)$. Our
goal is  to prove a similar result for the function $f_2$. Using
the decomposition result of Proposition~\ref{P:ApprDual}
 we can reduce matters to showing
 that there exists $d\in \N$ such that if $\nnorm{f_2}_{d,T_2}=0$,
 then
$$
\frac{1}{N}\sum_{n=1}^{N} \cD_x([a_1(n)])\cdot f_2(T_2^{[a_2(n)]}x)
\to^{L^2(\mu)} 0,
$$
where $(\cD_x(n))$ is a uniformly bounded sequence of  measurable
functions  such that for almost every $x\in X$ the sequence
$(\cD_n(x))$ is a dual sequence of  level at most  $d$. This
motivates us to seek for  estimates that connect   averages of the
form
$$
\frac{1}{N}\sum_{n=1}^{N}     \cD([a(n)])\cdot A(n)
$$
where $(\cD(n))$ is a dual sequence, and averages involving only
product of translates of the sequence $(A(n))$. We produce  such
estimates in this section.

\subsection{Correlation estimates for sequences}

\begin{proposition}\label{P:BASIC}
Let $\H$ be a Hardy field and $b_1,\ldots, b_l\in \H$ be functions
with  maximum degree $d\geq -1$. Let $(X,\X,\mu)$ be a probability
space, $(A(n))$, $(\cD_{1}(n)), \ldots,(\cD_{l}(n)) $ be uniformly
bounded sequences of $L^\infty(\mu)$ functions, such that for almost
every $x\in X$, for $i=1,\ldots, l$,  the sequences $(\cD_{i,x}(n))$
are dual sequences of level at most $r\in \N$. Then there exists
$s_0= s_0(d,l,r)\in \N$ and $C=C(d,l,r)\in \R$ such that for some
$s\leq s_0$ we have
\begin{multline*}
\limsup_{N\to\infty}\norm{\frac{1}{N}\sum_{n=1}^N (A_x(n)\cdot
\prod_{i=1}^l\cD_{i,x}([b_i(n)]))}_{L^2(\mu)}^{2^s}\leq \\
C\cdot \limsup_{H_s\to\infty}\frac{1}{H_s}\sum_{h_s=1}^{H_s} \cdots
\limsup_{H_1\to\infty}\frac{1}{H_1}\sum_{h_1=1}^{H_1}
\limsup_{N\to\infty} \sup_{E\subset \N}\norm{\frac{1}{N}\sum_{n=1}^N
 \prod_{\e\in \{0,1\}^s}  \cC^{|\e|} A_x(n+\e\cdot \h) \cdot {\bf
 1}_{E}(n)}_{L^2(\mu)}
\end{multline*}
where $\h\=(h_1,\ldots, h_s)$. 
\end{proposition}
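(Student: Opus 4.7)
The plan is to prove Proposition~\ref{P:BASIC} by iterating Lemma~\ref{L:N-VDC} (van der Corput) in a PET-style reduction, analogous in spirit to the argument of Proposition~\ref{P:CharB2} but with the ergodic iterates of a single transformation replaced by the dual sequences $\cD_{i,x}$. Setting $B_x(n) \= A_x(n) \prod_{i=1}^l \cD_{i,x}([b_i(n)])$ and applying Lemma~\ref{L:N-VDC} once bounds the square of the LHS $L^2(\mu)$ norm by an average in $h$ of
$$
\limsup_N \Big|\tfrac{1}{N}\sum_n \langle B_x(n+h), B_x(n)\rangle_{L^2(\mu)}\Big|,
$$
whose integrand features the product $A_x(n+h)\bar A_x(n)$ together with $\prod_i \cD_{i,x}([b_i(n+h)])\bar\cD_{i,x}([b_i(n)])$. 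Using the alternate formula \eqref{E:alternate} for dual sequences and Lemma~\ref{L:properties}, the shift $S_h b_i - b_i$ has Hardy-field degree strictly less than $\deg(b_i)$ whenever $b_i$ has positive degree; thus the shifted dual-sequence factors remain uniformly bounded sequences, and the Hardy-field functions that actually enter the brackets $[\cdot]$ after one step have strictly smaller maximum degree.

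I would then define a complexity measure on the data $(b_1,\ldots,b_l;\text{levels of the }\cD_i)$ analogous to the matrix type of Section~\ref{subsec:families}, tracking the number of non-equivalent growth classes of Hardy-field functions currently appearing inside the integer brackets. A reduction lemma in the spirit of Lemma~\ref{L:reduceA'} would show that each vdC step strictly decreases this measure. After at most $s_0 = s_0(d,l,r)$ iterations, all remaining functions inside the brackets have degree $-1$, hence their integer parts are eventually $0$ and take only finitely many values along $\N$; a finite-partition argument (as in Lemma~\ref{L:ProdTrick'}) then reduces all remaining dual-sequence factors to a uniformly bounded, $n$-dependent-only-through-a-finite-partition multiplier $g_x(n)$, leaving the expression $\prod_{\e\in\{0,1\}^s}\cC^{|\e|} A_x(n+\e\cdot\h)$. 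Absorbing $g_x$ via the elementary estimate $\big|\tfrac{1}{N}\sum_n a_n g_x(n)\big| \leq 4\,\|g_x\|_\infty \sup_{E\subset\N}\big|\tfrac{1}{N}\sum_n a_n \mathbf{1}_E(n)\big|$ (after decomposing $g_x$ into real/imaginary and positive/negative parts) yields exactly the right-hand side of the claim, with constant $C$ depending only on $d,l,r$.

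The main obstacle is the complexity bookkeeping. Unlike in Section~\ref{subsec:nicefamilies}, the dual-sequence factors do not obviously simplify under a single vdC step, since a product like $\cD(n+h)\bar\cD(n)$ is morally a dual object of higher level. The complexity measure must therefore separate the ``active'' Hardy-field degrees inside the brackets (which strictly decrease) from the auxiliary dual-sequence structure (which may grow, but only finitely often per step and in a way that contributes to $C$, not to $s_0$). The key point making this work is that each vdC step reduces the maximum Hardy-field degree inside the brackets by at least one, giving an outer termination bound of the form $s_0 \leq O((d+1)\cdot\mathrm{poly}(l,r))$, after which the $A_x$-shifts fully decouple from the $\cD_{i,x}$ factors and the right-hand side of the proposition emerges.
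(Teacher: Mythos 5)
The key idea you are missing is the mechanism by which the Hardy-field degrees inside the brackets actually drop. You assert that ``the Hardy-field functions that actually enter the brackets $[\cdot]$ after one step have strictly smaller maximum degree,'' citing $S_h b_i - b_i$ having lower degree, but nothing in your reduction produces the difference $b_i(n+h)-b_i(n)$. After a single van der Corput step you have the product $\cD_{i,x}([b_i(n+h)])\,\bar\cD_{i,x}([b_i(n)])$: the two dual-sequence factors still carry arguments $b_i(n+h)$ and $b_i(n)$, each of the same degree as $b_i$. You correctly flag this difficulty yourself (``a product like $\cD(n+h)\bar\cD(n)$ is morally a dual object of higher level''), but the proposed fix --- a complexity measure that separates ``active'' bracket degrees from ``auxiliary'' dual structure --- is a placeholder, not an argument, precisely because you have no operation that lowers the bracket degrees.

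The paper's proof sidesteps this by \emph{not} treating $\cD_{i,x}([b_i(n)])$ as an opaque sequence. It first uses the identity \eqref{E:alternate} to unroll the full product $\prod_{i=1}^l\cD_{i,x}([b_i(n)])$ into a single explicit average
$\lim_{M\to\infty} M^{-k}\sum_{\m\in[1,M]^k}\prod_{i}\d_{i,x}(\m+[\b_i(n)])$ (so that dual sequences are replaced by generic bounded sequences $\d_i$ of several integer variables, with the Hardy-field functions now sitting \emph{additively} next to the free variable $\m$), then applies Cauchy--Schwarz to pull the $\m$-average outside the $L^2(\mu)$ norm, and only then applies Lemma~\ref{L:VDC2} in $n$. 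The crucial step you are missing is the subsequent change of variables $\m\mapsto\m-[\b(n)]$ for a well-chosen $\b$: this is what produces arguments of the form $\m+[\b_i(n+h)-\b(n)]$ and $\m+[\b_i(n)-\b(n)]$, hence differences $S_h\b_i-\b$, $\b_i-\b$ of Hardy-field functions to which Lemmas~\ref{L:properties}, \ref{L:KeyHardy}, and the choice of $\b$ from Lemma~\ref{L:reduceA'} can be applied to drop the matrix type. Without the unrolling-plus-shift-in-$\m$ device, your PET scheme has no way to decrease the degree and does not terminate. The remaining bookkeeping (handling the integer errors from brackets via a finite partition, absorbing constants into $C$, and passing from a single $h$ to the full cube $\e\cdot\h$) is in the spirit of what you describe, but it only works after the change of variables is in place.
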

\begin{remark}
Notice that  we do not have to assume that $b_1,\ldots, b_l\in \G$.
When $\ell=1$ and $b_1(t)=t$ the result was proved in \cite{HK09}.
\end{remark}
\begin{proof}
To begin with,  using identity  \eqref{E:alternate}, we see that
there exist $k,\ell \in \N$ (in fact, $k=lr$ and $\ell=l(2^r-1)$),
vector valued sequences of functions $\b_1,\ldots, \b_\ell\colon
[c,\infty) \to\R^k$, with coordinates functions $b_{i,j}$ taken from
the set $\{0, b_1,\ldots, b_l\}$, and sequences $\d_1,\ldots,
\d_\ell\colon \N^k \to L^\infty(\mu)$, such that
$$
\prod_{i=1}^l\cD_{i,x}([b_i(n)]))= \lim_{M\to\infty}
\frac{1}{M^k}\sum_{\m \in [1,M]^k} \ \prod_{i=1}^\ell \d_{i,x}(\m+
[\b_i(n)] )
$$
where $[ \b_i ]\=([b_{i,1}],\ldots, [b_{i,k}])$ and
$\m\=(m_1,\ldots,m_k)$. Furthermore,  all functions  $b_{i,j}$ and
  $\d_i$ are bounded by $1$. It therefore  suffices to
prove the following claim:

\noindent {\bf Claim:} Let  $k, \ell\in \N$, $\H$ be a Hardy field,
and  for $i=1,\ldots, \ell$ let $\b_i=(b_{i,1},\ldots, b_{i,k})$
where $b_{i,j}\in \H$ are functions with maximum degree $d\geq -1$.
Furthermore, let  $(A(n))$, $(\d_1(\m)),\ldots, (\d_\ell(\m))$, $\m
\in \N^k$, be sequences of $L^\infty(\mu)$ functions,  all bounded
by $1$. Then there exists $s_0=s_0(d,k,\ell)\in \N$
such that for some $s\leq s_0$ the expression
\begin{equation}\label{E:uty1}
\limsup_{N\to\infty} \sup_{\norm{\d_i}_\infty\leq 1, E\subset \N}
\limsup_{M\to\infty} \norm{ \frac{1}{N}\sum_{n=1}^N \ \big(
A_x(n)\cdot \frac{1}{M^k}\sum_{\m \in [1,M]^k}  \ \prod_{i=1}^\ell
\d_{i,x}(\m+ [\b_i(n)] ) \cdot {\bf 1}_E(n)\big)}_{L^2(\mu)}^{2^s}
\end{equation}
is bounded by a constant $C=C(d,k,\ell)$ times
$$
 \limsup_{H_s\to\infty}\frac{1}{H_s}\sum_{h_s=1}^{H_s}\cdots
\limsup_{H_1\to\infty}\frac{1}{H_1}\sum_{h_1=1}^{H_1}\limsup_{N\to\infty}
  \sup_{E\subset \N} \norm{
  \frac{1}{N}\sum_{n=1}^N
  \prod_{\e\in \{0,1\}^s}
 \cC^{|\e|}A_x(n+\e \cdot \h) \cdot {\bf 1}_{E}(n)}_{L^2(\mu)}
$$
where $[ \b_i ]\=([b_{i,1}],\ldots, [b_{i,k}])$   and
$\h\=(h_1,\ldots, h_s)$.

Equivalently, it suffices to prove the same estimate with the left
hand side replaced with
$$
\limsup_{N\to\infty}\limsup_{M\to\infty} \norm{
\frac{1}{N}\sum_{n=1}^N \ \big( A_x(n)\cdot \frac{1}{M^k}\sum_{\m
\in [1,M]^k}  \ \prod_{i=1}^\ell \d_{i,x,N}(\m+ [\b_i(n)] ) \cdot
{\bf 1}_{E_N}(n)\big)}_{L^2(\mu)}^{2^s}
$$
where  for $N\in \N$ the sequences of functions  $\d_{1,N},\ldots,
\d_{\ell,N}\colon \N^k \to L^\infty(\mu)$ are  bounded by $1$.

 For $i=1,\ldots, k$,  let
$\mathcal{A}_i=(b_{1,i},\ldots, b_{\ell,i})$,
  and define
the matrix type $W$ of the family of $k$-tuples
$(\mathcal{A}_1,\ldots,\mathcal{A}_k)$  as in Section~\ref{SS:4.1}.
Notice that  having fixed $d,k, \ell$, there is only a finite number
of possibilities for $W$. The proof of the claim is going to proceed
by induction on $W$. We remark that it suffices to show that the
constants $C$ and $s$ depend only on $W, k$, and $\ell$.
Furthermore, we can assume that $b_{1,1}$ is the function with the
largest growth rate.

\noindent {\bf Base case:} We assume that $d=-1$, in which case all
functions  $b_{i,j}(t)$ converge to $0$. Then for $i=1,\ldots,
\ell$, for all large enough $n\in \N$  the sequence $[\b_i]$ takes
values on some finite subset $F_i\subset \Z^k$ with $|F_i|\leq
2^{k}$. Without loss of generality we can assume that this happens
for every $n\in \N$.
Let $E_{N,1}, \ldots, E_{N,t}$ ($t\leq 2^{k\ell}$) be sets that form
a partition of $S_N$ into sets where all the sequences $[b_{i,j}]$
are constant. Then there exist constants $|c_{j,N}|\leq 1$ such that
for $s=0$ the quantity we want to estimate is equal to
$$
\limsup_{N\to\infty}\norm{ \sum_{j=1}^t c_{j,N} \Big(
\frac{1}{N}\sum_{n=1}^{N} \ \big( A_x(n) \ \!  {\bf 1}_{E_{N,j}}(n)
 \big)\Big)}_{L^2(\mu)}\leq
t\ \! \limsup_{N\to\infty}\sup_{E\subset \N} \norm{
\frac{1}{N}\sum_{n=1}^N \big( A_x(n)\ \!
 {\bf 1}_{E }(n)
 \big)}_{L^2(\mu)}.
$$

\noindent {\bf Inductive step:} Let
$(\mathcal{A}_1,\ldots,\mathcal{A}_k)$ be a family of $\ell$ ordered
$k$-tuples of functions with matrix type $W$ and degree $d\geq 0$,
in which case $\deg(b_{1,1})\geq 0$. Suppose that the claim  holds
for every family of $2\ell$ ordered $k$-tuples of functions of
matrix type $W'$ strictly less than $W$ with $s_0=s_0(W',k,2\ell)$
and $C=C(W',k,2\ell)$. We let
\begin{equation}\label{E:sC}
s_0(W,k,\ell)=\max_{W'<W}(s_0(W',k,2\ell))+1, \quad
C(W,k,\ell)=2^{(2k\ell+1)2^{s_0(W,k,\ell)-1}}\max_{W'<W}(C(W',k,2\ell))
\end{equation}
where the max is taken over the finitely many matrix types of
families of at most $2\ell$ functions that are smaller than $W$. The
induction will be complete if we show that the asserted estimate
holds for the family $(\mathcal{A}_1,\ldots,\mathcal{A}_k)$ for
these values of $s(W,k,\ell)$ and  $C(W,k,\ell)$.

 We start by using the Cauchy Schwarz inequality
\begin{multline}\label{E:uty2}
 \limsup_{N\to\infty} \limsup_{M\to\infty}\norm{\frac{1}{N}\sum_{n=1}^N \ \big( A_x(n)\cdot
 \frac{1}{M^k}\sum_{\m \in [1,M]^k} \
\prod_{i=1}^\ell \d_{i,x,N}(\m+ [\b_i(n)] ) \cdot {\bf
1}_{E_N}(n)\big)}_{L^2(\mu)}^{2}\leq
\\   \limsup_{N\to\infty}\limsup_{M\to\infty} \frac{1}{M^k}\sum_{\m \in [1,M]^k}
\norm{\frac{1}{N}\sum_{n=1}^{N} \ \big( A_x(n)\cdot \prod_{i=1}^\ell
\d_{i,x,N}(\m+ [\b_i(n)] ) \cdot {\bf
1}_{E_N}(n)\big)}_{L^2(\mu)}^2.
\end{multline}
Using Lemma~\ref{L:VDC2}, ignoring negligible terms, and using the
Cauchy Schwarz inequality, we find
 that the last expression is bounded by   $2$ times
\begin{multline*}
\limsup_{H\to\infty}\frac{1}{H}\sum_{h=1}^{H} \limsup_{N\to\infty}
\limsup_{M\to\infty} \Big|\!\Big| \frac{1}{N}\sum_{n=1}^{N} \
\big(A_x(n+h)
\cdot \bar{A_x}(n) \cdot \\
 \frac{1}{M^k}\sum_{\m \in [1,M]^k} \
\prod_{i=1}^\ell \d_{i,x,N}(\m+ [\b_i(n+h)])\cdot
\bar{\d}_{i,x,N}(\m+ [\b_i(n)] ) \cdot {\bf 1}_{E_N}(n+h)\cdot {\bf
1}_{E_N}(n)\big)\Big|\!\Big|_{L^2(\mu)}.
\end{multline*}
  We make the change of
variables $\m\to \m-[\b(n)]$, for some vector valued function $\b$
that will be determined later.
Ignoring negligible terms, we see that the
last expression is equal to
\begin{multline}\label{E:qwa}
 \limsup_{H\to\infty} \frac{1}{H}\sum_{h=1}^{H}\limsup_{N\to\infty} \limsup_{M\to\infty}
  \Big|\!\Big|\frac{1}{N}\sum_{n=1}^{N} \
\big(A_x(n+h) \cdot \bar{A_x}(n) \cdot \\  \frac{1}{M^k}\sum_{\m \in
[1,M]^k} \ \prod_{i=1}^\ell \d_{i,x,N}(\m+
[\b_i(n+h)-\b(n)]+\e_{i,h}(n)) \cdot \bar{\d}_{i,x,N}(\m+
[\b_i(n)-\b(n)]+\e'_{i,h}(n) ) \cdot {\bf 1}_{E_{N,h}}(n)
\big)\Big|\!\Big|_{L^2(\mu)}
\end{multline}
where the sequences $(\e_{i,h}(n)), (\e'_{i,h}(n))$ take  values in
$\{0,1\}^k$ and $E_{N,h}\=E_N\cap (E_N-h)$.
Notice that
\begin{multline*}
  \prod_{i=1}^\ell \d_{i,x,N}(\m+
[\b_i(n+h)-\b(n)]+\e_{i,h}(n))  \cdot \bar{\d}_{i,x,N}(\m+
[\b_i(n)-\b(n)]+\e'_{i,h}(n)) \cdot {\bf 1}_{E_{N,h}}(n)=\\
 \sum_{j=1}^t   \prod_{i=1}^\ell
\d_{i,j,x,N}(\m+ [\b_i(n+h)-\b(n)])  \cdot \d'_{i,j,x,N}(\m+
[\b_i(n)-\b(n)])\cdot {\bf 1}_{E_{N,h,j}}(n),
\end{multline*}
where the sets $E_{N,h,1}, \ldots, E_{N,h,t}$ ($t\leq 2^{2k\ell}$)
form a partition of $E_{N,h}$ into sets where the sequences $\e_i,
\e'_i$ are constant (either $0$ or $1$),
$\d_{i,j,N}(\m)\=\d_{i,N}(\m+c_j)$, and
$\d'_{i,jN}(\m)\=\bar{\d}_{i,N}(\m+c'_j)$. Combining the above we
get that the limit in \eqref{E:qwa} is bound by
\begin{multline*}
t\cdot  \limsup_{H\to\infty} \frac{1}{H}\sum_{h=1}^{H}
 \limsup_{N\to\infty} \sup_{\norm{d_i}_\infty, \norm{d'_i}_\infty\leq 1,
 E\subset \N} \limsup_{M \to\infty}
\Big|\!\Big|\frac{1}{N}\sum_{n=1}^{N} \ \big(A_x(n+h) \cdot
\bar{A_x}(n) \cdot
\\  \frac{1}{M^k}\sum_{\m \in [1,M]^k} \ \prod_{i=1}^\ell \d_{i,x}(\m+
[\b_i(n+h)-\b(n)]) \cdot \d'_{i,x}(\m+ [\b_i(n)-\b(n)] ) \cdot {\bf
1}_{E}(n) \big)\Big|\!\Big|_{L^2(\mu)}.
\end{multline*}

 This naturally leads us to consider a new family
that consist of $2\ell$ ordered $k$-tuples of functions. Choosing
$\b$ exactly as in the proof of Lemma~\ref{L:reduceA'}, and
following the argument used there,
we see that this new family has matrix type $W'$
strictly smaller than $W$.

For this choice of $\b$,  raising both sides of \eqref{E:uty2} to
the power $2^{s(W',k,2\ell)}$,   working through the previous
estimates (we use also the Holder inequality at the last step), and
using the induction hypothesis, we get that for $s=s(W',k,2\ell)+1$
and $C=C(W,k,\ell)$, defined as in \eqref{E:sC},  the left hand side
in \eqref{E:uty1} is bounded by $C$ times
\begin{multline*}
\limsup_{H\to\infty}\frac{1}{H}\sum_{h=1}^{H}
\limsup_{H_s\to\infty}\frac{1}{H_s}\sum_{h_s=1}^{H_s} \cdots
\limsup_{H_1\to\infty}\frac{1}{H_1}\sum_{h_1=1}^{H_1}\limsup_{N\to\infty} \\
\sup_{E\subset \N}\Big|\!\Big| \frac{1}{N}\sum_{n=1}^{N}
\prod_{\e\in \{0,1\}^s} \cC^{|\e|}A_x(n+h+\e \cdot \h)\cdot
\cC^{|\e|}\bar A_x(n+\e \cdot \n) \cdot {\bf 1}_{E}(n)
\Big|\!\Big|_{L^2(\mu)}
\end{multline*}
where $\h=(h_1,\ldots, h_s)$. The last expression is equal to
$$
 \limsup_{H\to\infty}\frac{1}{H}\sum_{h=1}^{H} \frac{1}{H_s}\sum_{h_s=1}^{H_s}\! \cdots
\limsup_{H_1\to\infty}\frac{1}{H_1}\sum_{h_1=1}^{H_1}\limsup_{N\to\infty}
\sup_{E\subset \N}\Big|\!\Big| \frac{1}{N}\sum_{n=1}^{N} \!
\prod_{\e\in \{0,1\}^{s+1}}
 \cC^{|\e|} A_x(n+\e \cdot \h) \cdot {\bf 1}_{E}(n) \Big|\!\Big|_{L^2(\mu)}
$$
where $\h=(h_1,\ldots, h_s,h)$, as desired.
\end{proof}
\subsection{Correlation estimates for ergodic averages}
Next we combine Proposition~\ref{P:BASIC} with
Proposition~\ref{P:CharB} in order to prove a result that will be
crucial in the proof of Theorem~\ref{T:MainConv}.
\begin{proposition}\label{prop:weighted}
 Let $(X,\X,\mu,T_1,\ldots,T_\ell)$ be a system
and $f_1,\ldots,f_m\in L^\infty(\mu)$ be functions. Let
$(\A_1,\ldots,\A_\ell)$ be a nice  ordered  family of $\ell$-tuples
of functions with degree at most $d$ and such that
$\deg(a_{1,1})\geq 1$. Let $\H$ be a Hardy field and
  $b_1,\ldots, b_l\in \H$   be functions with maximum degree $d$.
Furthermore, for $i=1,\ldots, l$, let $(\cD_{i}(n)) $ be a sequence
of functions in $L^\infty(\mu)$, all bounded by $1$, such that for
almost every $x\in X$, the sequences $(\cD_{i,x}(n))$
 are  dual sequences of level   at most   $r\in \N$. Then there  exists $k=k(d,l,\ell,m,r )\in \N$ such
that: If
    $\nnorm{f_1}_{k,T_1}=0$, then the averages
\begin{equation}\label{E:098}
\frac{1}{N}\sum_{n=1}^{N}
 \prod_{i=1}^m f_i(T_1^{[a_{1,i}(n)]}  \cdots
 T_\ell^{[a_{\ell,i}(n)]}x)
\cdot \prod_{i=1}^l \cD_{i,x}([b_i(n)])
\end{equation}
 converge to $0$ in  $L^2(\mu)$.
\end{proposition}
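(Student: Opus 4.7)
My plan is to combine Proposition~\ref{P:BASIC} with Proposition~\ref{P:CharB}. Set
$$
A_x(n) := \prod_{i=1}^m f_i(T_1^{[a_{1,i}(n)]}\cdots T_\ell^{[a_{\ell,i}(n)]}x),
$$
a uniformly bounded sequence of $L^\infty(\mu)$ functions. Applying Proposition~\ref{P:BASIC} to $(A_x(n))$, the dual sequences $(\cD_{i,x}(n))$, and the Hardy field functions $b_1,\ldots,b_l$ (of maximum degree $d$) produces integers $s_0=s_0(d,l,r)$, a constant $C=C(d,l,r)$, and some $s\leq s_0$ for which the $2^s$-th power of the $L^2(\mu)$-norm of the averages in~\eqref{E:098} is controlled by $C$ times an iterated limsup over $h_1,\ldots,h_s\in\N$ of
$$
\limsup_{N\to\infty}\sup_{E\subset\N}\Big\|\frac{1}{N}\sum_{n=1}^N\prod_{\e\in\{0,1\}^s}\cC^{|\e|}A_x(n+\e\cdot\h)\cdot{\bf 1}_E(n)\Big\|_{L^2(\mu)}.
$$
Expanding each factor $A_x(n+\e\cdot\h)$, this inner quantity is an average involving a product of $m\cdot 2^s$ translates of the $f_i$'s (with conjugation determined by the parity of $|\e|$), where the iterate attached to the pair $(i,\e)$ is governed by the shifted Hardy field functions $S_{\e\cdot\h}a_{j,i}$ for $j=1,\ldots,\ell$.

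To show that this inner quantity tends to zero for each fixed $\h$ with $h_1,\ldots,h_s\geq 1$, I will assemble the shifted tuples $(S_{\e\cdot\h}\A_1,\ldots,S_{\e\cdot\h}\A_\ell)$, as $\e$ ranges over $\{0,1\}^s$, into a single ordered family of $m\cdot 2^s$ $\ell$-tuples, placing the unshifted tuple $(a_{1,1},\ldots,a_{\ell,1})$ at the distinguished first position. Since shifting preserves degree, this concatenated family has degree at most $d$, and once I verify that it is nice in the sense of Section~\ref{SS:4.1}, Proposition~\ref{P:CharB} supplies an integer $k=k(d,\ell,m\cdot 2^{s_0})$, depending only on $d,l,\ell,m,r$, such that $\nnorm{f_1}_{k,T_1}=0$ forces the inner average to vanish in $L^2(\mu)$ uniformly in $E$. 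Since the $h$-averaging harmlessly absorbs a set of zero density in $\N^s$, the iterated limsup vanishes, completing the proof.

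The main obstacle is the niceness verification for the concatenated family. The crucial observation is that $\deg(a_{1,1})\geq 1$ together with $a_{1,1}\in\cG$ force $a_{1,1}\succ t\log t$, so Lemma~\ref{L:properties}(ii) yields $S_h a_{1,1}-a_{1,1}\sim a_{1,1}/t\succ\log t\succ 1$ for every nonzero integer $h$. Combined with Lemma~\ref{L:KeyHardy} and the original niceness of $(\A_1,\ldots,\A_\ell)$, this lets me check case by case the required bounds, namely $a_{1,1}-S_{\e\cdot\h}a_{1,i}\succ 1$ and $S_{\e\cdot\h}a_{1,i}\ll a_{1,1}$ for $(i,\e)\neq(1,\mathbf{0})$, the dominations $S_{\e\cdot\h}a_{i,j}\prec a_{1,1}$ for $i\geq 2$, and the comparison $a_{i,1}-S_{\e\cdot\h}a_{i,j}\prec a_{1,1}-S_{\e\cdot\h}a_{1,j}$ between first-coordinate and lower-coordinate differences. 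The bookkeeping parallels the niceness check already carried out in Lemma~\ref{L:reduceA'}, and the finitely many exceptional $\h$'s that might fail the analogues of Lemma~\ref{L:KeyHardy}(ii) are precisely the ones discarded by the $h$-averaging.
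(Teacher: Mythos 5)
Your proposal is correct and matches the paper's own proof: both define $A_x(n)$ as the product over $i$ of the translated $f_i$'s, invoke Proposition~\ref{P:BASIC} to pass to iterated $h$-averages of $\prod_{\e}\cC^{|\e|}A_x(n+\e\cdot\h)$, and then apply Proposition~\ref{P:CharB} to the concatenated family of $m\cdot 2^s$ shifted $\ell$-tuples after verifying niceness via Lemmas~\ref{L:properties} and \ref{L:KeyHardy}. The only loose spot is calling the exceptional $\h$ ``finitely many'' --- the exceptions form a nested set (finite in each coordinate given the previous ones, the set $R$ of the paper), not a finite subset of $\N^s$, but as you note this is exactly what the iterated limsups discard, so the argument stands.
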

\begin{proof}
Let $s\=s(d,l,r)$ be as in the statement of
Proposition~\ref{P:BASIC}.
 We assume that $\nnorm{f_1}_{k,T_1}=0$ where $k\=k(d,\ell,2^s \ell )$
  is given by Proposition~\ref{P:CharB}.
 We let $s'\=2^{s}$, and for $x\in X$, let $ (A_x(n)) $
be the sequence  of $L^\infty(\mu)$ functions defined by
$$
A_x(n)\=\prod_{i=1}^{m} f_i(T_1^{[a_{1,i}(n)]}\cdots
 T_\ell^{[a_{\ell,i}(n)]}x).
$$
For $i=1,\ldots, \ell$, consider the following ordered families
each consisting of $m s'$ functions:
$$
\A_i' \= \bigl(a_{i,1}(n+r_1),\ldots,a_{i,1}(n+r_{s'}), \ldots,
a_{i,m}(n+r_1),\ldots,a_{i,m}(n+r_{s'}) \bigr).
$$
Since  $\deg(a_{1,1})\geq 1$ and $(\A_1,\ldots,\A_\ell)$ is a nice
ordered family, one can  check  using Lemma~\ref{L:KeyHardy} that
$(\A_1',\ldots,\A_\ell')$ is also a  nice ordered family for all
${\bf r}$ in a subset  $R\subset \N^{s'}$ of the form
$$
R\=\{{\bf r}=(r_1,\ldots, r_{s'}) \colon   r_1\geq c_1,\ r_2\geq
c_{2}(r_1),\ldots, \ r_{s'}\geq c_{s'}(r_1,\ldots,r_{s'-1})\}
$$
for some sequences $c_i\colon \N^{i-1}\to \N$.
 Using Proposition~\ref{P:CharB}
we have that
$$
\lim_{ N\to\infty} \sup_{E\subset \N} \norm{ \frac
1{N}\sum_{n=1}^{N} \prod_{i=1}^{s'} A_x(n+r_i) \cdot {\bf
1}_E(n)}_{L^2(\mu)}=0
$$
for all ${\bf r}\in R$.
Furthermore, a similar conclusion holds if one replaces some of the
sequences of functions $(A(n+r_i))_{n\in\N}$ with their complex
conjugates.

Hence,  for a set of $\h\in \N^{s}$ that has similar structure as
$R$, we have
$$
\lim_{N\to\infty} \sup_{E\subset \N }\norm{\frac{1}{N}\sum_{n=1}^{N}
\prod_{\e\in \{0,1\}^s} \cC^{|\e|} A_x(n+\e\cdot \h) \cdot {\bf
1}_{E}(n) }_{L^2(\mu)}=0.
$$
We deduce from   Proposition ~\ref{P:BASIC}  that the averages
\eqref{E:098} converge to $0$ in $L^2(\mu)$, as desired.
\end{proof}

\section{Seminorm estimates for the lower degree iterates and
proof of convergence}\label{S:Charlower} In this section we prove
Theorem~\ref{T:MainConv}. We first handle the case where all
  the iterates have super-linear growth, and later
 on
use an averaging trick to handle the general case.
\subsection{Seminorm estimates in the positive degree case}\label{SS:ProofA}
\begin{proposition}
\label{pr:charcnilseq'}
Let $(X,\X,\mu,T_1,\ldots,T_\ell)$ be a system and
$f_1,\ldots,f_\ell\in L^\infty(\mu)$ be functions.  Let $\H$ be a
Hardy field and $a_1,\ldots,a_\ell \in \G\cap \H$ be functions with
different growth rates and degree between $1$ and
 $d$ for some $d\in\N$.
 Then there exists
$k=k(d,\ell)$ such that the following holds: If
$\nnorm{f_i}_{k,T_i}=0$ for some $i\in\{1,\ldots,\ell\}$, then the
averages
$$
 \frac{1}{N}\sum_{n=1}^{N}
 \prod_{i=1}^\ell T_i^{[a_i(n)]}f_i
$$
converge to $0$ in $L^2(\mu)$.
\end{proposition}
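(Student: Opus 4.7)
The plan is to reduce Proposition~\ref{pr:charcnilseq'} to Proposition~\ref{P:CharBspecial}, the weak decomposition Proposition~\ref{P:ApprDual}, and the weighted estimate Proposition~\ref{prop:weighted}, by peeling off one function at a time. After a harmless relabeling I may assume $a_1\succ a_2\succ\cdots\succ a_\ell$; the case $i=1$ is then exactly Proposition~\ref{P:CharBspecial} and produces an integer $k_1=k_1(d,\ell)$. For $i\ge 2$ I choose further integers $k_2,\ldots,k_\ell$ inductively and set $k=\max_j k_j$.

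Given $k_1,\ldots,k_{j-1}$, I define $k_j$ by invoking Proposition~\ref{prop:weighted} applied to the diagonal family
\[
\A_p=(0,\ldots,0,a_{p+j-1},0,\ldots,0),\qquad p=1,\ldots,\ell-j+1
\]
(with the unique non-zero entry $a_{p+j-1}$ in slot $p$) in the transformations $T_j,\ldots,T_\ell$, with weight functions $b_p=a_p$ for $p=1,\ldots,j-1$ and dual-sequence level $r=\max(k_1,\ldots,k_{j-1})$. The niceness axioms of Section~\ref{SS:4.1} are immediate from $a_j\in\cG\cap\H$ and the ordering $a_j\succ a_{j+1}\succ\cdots\succ a_\ell$, and $\deg(a_{1,1})=\deg(a_j)\ge 1$ by assumption. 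Proposition~\ref{prop:weighted} then returns an integer $k_j$ such that the associated weighted averages tend to $0$ in $L^2(\mu)$ whenever the function placed at slot~$1$ of this family has vanishing $\nnorm{\,\cdot\,}_{k_j,T_j}$ seminorm.

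With these $k_j$'s fixed, assuming $\nnorm{f_i}_{k,T_i}=0$ for some $i\ge 2$, I iterate the weak decomposition on $f_1,f_2,\ldots,f_{i-1}$ at levels $k_1,k_2,\ldots,k_{i-1}$ respectively. At step $j$ I write $f_j=f_{j,s}+f_{j,u}+f_{j,e}$ with $\norm{f_{j,e}}_{L^1(\mu)}<\varepsilon$; the $f_{j,u}$-part is killed either by Proposition~\ref{P:CharBspecial} (when $j=1$) or by the $j$-th instance of Proposition~\ref{prop:weighted} constructed above (when $j\ge 2$, with the dual-sequence weights already accumulated from earlier steps), the $f_{j,e}$-part contributes $O(\varepsilon)$, and the $f_{j,s}$-part replaces $f_j(T_j^{[a_j(n)]}x)$ by a linear combination of dual sequences of level $k_j$ evaluated at $[a_j(n)]$ (composition of a dual function with an orbit is a dual sequence, as recorded at the end of the proof of Proposition~\ref{P:ApprDual}). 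After the $(i-1)$-th iteration the problem collapses, modulo $O(\varepsilon)$, to showing that
\[
\frac{1}{N}\sum_{n=1}^N\prod_{p=1}^{i-1}\cD_{p,x}([a_p(n)])\cdot\prod_{p=i}^{\ell}f_p(T_p^{[a_p(n)]}x)\longrightarrow 0\quad\text{in } L^2(\mu),
\]
which is exactly the $j=i$ instance of Proposition~\ref{prop:weighted} under $\nnorm{f_i}_{k_i,T_i}=0$. Letting $\varepsilon\to 0$ finishes the argument.

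The main technical obstacle I anticipate is the bookkeeping: at every step I must check that the diagonal family above is nice in the sense of Section~\ref{SS:4.1} with $\deg(a_{1,1})\ge 1$ (both immediate from the ordering and from $a_j\in\cG\cap\H$ with positive degree), and I must arrange each $k_j$ to be at least as large as the level $r$ demanded by the subsequent application of Proposition~\ref{prop:weighted}. These dependencies close up in finitely many steps, yielding a final $k=\max_{1\le j\le\ell}k_j$ depending only on $d$ and $\ell$, as required.
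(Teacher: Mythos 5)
Your argument is sound and rests on the same two pillars as the paper's proof — the weak decomposition into dual sequences (Proposition~\ref{P:ApprDual}) and the weighted seminorm estimate (Proposition~\ref{prop:weighted}) — but it is organized differently. The paper proves a strictly more general statement, Proposition~\ref{pr:charcnilseq}, which builds the dual-sequence weights $\cD_{p,x}([b_p(n)])$ into the hypotheses from the start, and then runs a single clean induction on the number $\ell$ of transformations: at each step one decomposes only $f_1$, the $f_u$-part vanishes by Proposition~\ref{prop:weighted}, the $f_e$-part is $O(\varepsilon)$ against the remaining functions (all bounded by $1$), and the $f_s$-part is handled by the induction hypothesis with one more dual-sequence weight and one fewer transformation. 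Your flat version unfolds this recursion explicitly, peeling $f_1,\ldots,f_{i-1}$ one at a time. This is valid, but it introduces a bookkeeping point you gloss over: the decomposition of $f_j$ at level $k_j$ produces $f_{j,s}=\sum_q c_{j,q}f_{j,s,q}$ whose coefficients (and hence $\norm{f_{j,s}}_{L^\infty(\mu)}$) depend on the accuracy $\varepsilon_j$, so the error term at step $j+1$ is $O(C_j(\varepsilon_j)\cdot\varepsilon_{j+1})$ rather than $O(\varepsilon)$. To close this one must either pick the $\varepsilon_j$'s to shrink rapidly (first fixing $\varepsilon_1$, then choosing $\varepsilon_2\ll 1/C_1(\varepsilon_1)$, and so on), or — what the paper does and is cleaner — phrase the induction so that the $f_s$-contribution is shown to be exactly $0$ in the limit \emph{before} the outer $\varepsilon$ is sent to $0$, so that no compounding occurs. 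The remaining ingredients you invoke — the niceness of the diagonal families, $\deg(a_{1,1})\ge 1$, the monotonicity $\nnorm{f}_{k_i,T_i}\le\nnorm{f}_{k,T_i}$ so that $k=\max_j k_j$ suffices, and the finitely-closing dependencies giving $k=k(d,\ell)$ — all check out.
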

Proposition~\ref{pr:charcnilseq'} follows from the following more
general result:
\begin{proposition}
\label{pr:charcnilseq}
Let $(X,\X,\mu,T_1,\ldots,T_\ell)$ be a system and
$f_1,\ldots,f_\ell\in L^\infty(\mu)$ be functions.  Let $\H$ be a
Hardy field and $a_1,\ldots,a_\ell \in \G\cap \H$  be functions with
different growth rates and degree between $1$ and
 $d$ for some $d\in\N$. Furthermore, let  $b_1,\ldots, b_l\in \H$
  have degree at most $d$.
For $i=1,\ldots, l$, let $(\cD_{i,x}(n))_{n\in\N}$ be  a uniformly
bounded sequence of measurable functions  such that, for almost
every $x\in X$, the sequence $(\cD_{i,x}(n))_{n\in\N}$ is a dual
sequence of level at most $r$.
 Then there exists
$k=k(d,l,\ell,r)$ such that the following holds: If
$\nnorm{f_i}_{k,T_i}=0$ for some $i\in\{1,\ldots,\ell\}$, then the
averages
\begin{equation}
\label{eq:weighted}
 \frac{1}{N}\sum_{n=1}^{N}
 \prod_{i=1}^\ell f_i(T_i^{[a_i(n)]}x) \cdot \prod_{i=1}^l
 \cD_{i,x}([b_i(n)])
\end{equation}
converge to $0$ in $L^2(\mu)$.
\end{proposition}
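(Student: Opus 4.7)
We argue by induction on $\ell$, using the weak decomposition (Proposition~\ref{P:ApprDual}) to peel off the function attached to the fastest-growing iterate whenever the ``bad'' index is not $i=1$. After relabeling we may assume $a_1 \succ a_2 \succ \cdots \succ a_\ell$.

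\textbf{Case $i = 1$ (covers also the base case $\ell = 1$).} Consider the family of $\ell$-tuples $(\A_1,\ldots,\A_\ell)$ with $\A_j=(0,\ldots,0,a_j,0,\ldots,0)$ ($a_j$ in the $j$-th slot). The three niceness conditions are straightforward to check: for $j\geq 2$, $a_{1,1}-a_{1,j}=a_1\succ 1$ (since $\deg(a_1)\geq 1$) and $a_{1,j}=0\ll a_1$; for $i\geq 2$ and any $j$, $a_{i,j}\in\{0,a_i\}$ satisfies $a_{i,j}\prec a_1$; and for $i\geq 2$, $j\geq 2$, $a_{i,1}-a_{i,j}\in\{0,-a_i\}$ is dominated by $a_{1,1}-a_{1,j}=a_1$. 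Since $\deg(a_{1,1})\geq 1$, Proposition~\ref{prop:weighted} applies to this family together with the given $b_1,\ldots,b_l$ and dual sequences $\cD_{i,x}$ to furnish a constant $k_0=k_0(d,l,\ell,r)$ such that $\nnorm{f_1}_{k_0,T_1}=0$ forces the averages \eqref{eq:weighted} to tend to $0$ in $L^2(\mu)$.

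\textbf{Inductive step, case $i \geq 2$.} Assume the proposition for $\ell-1$ transformations, and apply Proposition~\ref{P:ApprDual} to $f_1$ at level $k_0$: for each $\varepsilon>0$, write $f_1=f_{1,s}+f_{1,u}+f_{1,e}$ with $\nnorm{f_{1,u}}_{k_0}=0$, $\|f_{1,e}\|_{L^1(\mu)}\leq\varepsilon$, and $f_{1,s}=\sum_{j=1}^p c_j g_j$ where, for a.e.\ $x\in X$, each sequence $(g_j(T_1^n x))_{n\in\N}$ is a dual sequence of level $k_0$. The contribution of $f_{1,u}$ vanishes by the case $i=1$ just proved; the contribution of $f_{1,e}$ is $O(\varepsilon)$ uniformly in $N$. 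For the structured piece, substituting $n\mapsto[a_1(n)]$ turns $g_j(T_1^{[a_1(n)]}x)$ into $\widetilde\cD_{j,x}([a_1(n)])$, where $\widetilde\cD_{j,x}(m)\=g_j(T_1^m x)$ is almost surely a dual sequence of level $\leq k_0$. Hence, for each $j$, we are reduced to controlling an average of the form
$$
\frac{1}{N}\sum_{n=1}^N \prod_{i'=2}^\ell f_{i'}(T_{i'}^{[a_{i'}(n)]}x)\cdot\widetilde\cD_{j,x}([a_1(n)])\cdot\prod_{i'=1}^l\cD_{i',x}([b_{i'}(n)]),
$$
which fits the setup of Proposition~\ref{pr:charcnilseq} with $\ell-1$ transformations $T_2,\ldots,T_\ell$, iterates $a_2\succ\cdots\succ a_\ell$ in $\G\cap\H$ of degree in $[1,d]$, and $l+1$ dual sequences with associated Hardy field functions $(a_1,b_1,\ldots,b_l)\in\H$ all of degree at most $d$ and level at most $\max(k_0,r)$. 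The ``bad'' index is now $i-1\in\{1,\ldots,\ell-1\}$, so the inductive hypothesis supplies $k'=k'(d,l+1,\ell-1,\max(k_0,r))$ such that $\nnorm{f_i}_{k',T_i}=0$ forces the reduced average to $0$. Setting $k\=\max(k_0,k')$ and letting $\varepsilon\to 0$ completes the induction.

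\textbf{Main obstacle.} The essential conceptual point is the absorption step: once $f_1$ is replaced by its structured part, the resulting factor $f_{1,s}(T_1^{[a_1(n)]}x)$ must fit cleanly into the $\cD$-framework of Proposition~\ref{pr:charcnilseq} as an additional $\widetilde\cD$-factor indexed by the new Hardy field function $a_1$. This is legitimate because the $b_j$'s allowed by that proposition are arbitrary elements of $\H$ of degree $\leq d$, a hypothesis strictly weaker than $a_1\in\G\cap\H$; in particular, the $b_j$'s are \emph{not} required to have positive degree or to avoid polynomials. Once this absorption is in place, the induction on $\ell$ runs smoothly, with each step handled by the $i=1$ case coming from Proposition~\ref{prop:weighted}.
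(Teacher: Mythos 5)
Your proof is correct and takes essentially the same approach as the paper: induction on $\ell$, with the $i=1$ case handled by Proposition~\ref{prop:weighted} applied to the natural nice family, and the $i\geq 2$ case handled by the weak decomposition (Proposition~\ref{P:ApprDual}) of $f_1$, absorbing the structured part of $f_1$ as an additional dual-sequence factor $\widetilde\cD_{j,x}([a_1(n)])$ so the induction hypothesis for $\ell-1$ transformations and $l+1$ dual sequences applies. The only cosmetic difference is that you make the nice family explicit and spell out the absorption step, whereas the paper does this implicitly; both rely on the same observation that the $b_j$'s in the statement are allowed to range over all of $\H$ with degree $\leq d$ (and in particular need not lie in $\G$), which is exactly what makes $a_1$ admissible as a new dual-sequence label.
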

\begin{proof}
The proof goes by induction on the number $\ell$ of transformations.
For $\ell=1$, the result follows from  the case $\ell=1$ of
Proposition~\ref{prop:weighted}. We take $\ell\geq 2$, assume that
the results holds for $\ell-1$ transformations, and we are going to
prove that it holds for $\ell$ transformations.

Without loss of generality we can assume that $a_1$  is the fastest
growing function, and that all functions and dual sequences are
bounded by $1$. By Proposition~\ref{prop:weighted}, there exists
$k_0=k_0(d,l,\ell,r)$ such that, if $\nnorm{f_1}_{k_0,T_1}=0$, then
the averages~\eqref{eq:weighted} converge to $0$ in $L^1(\mu)$. Let
$k_1\=k(\tilde{d},\tilde{l}, \ell-1, \tilde{r})$ be the integer that
the induction hypothesis gives for  $\tilde{d}\=\max\{d,k_0\}$,
$\tilde{r}\=\max\{r,k_0\}$, and $\tilde{l}\=l+1$. Suppose that
$\nnorm{f_i}_{k_1,T_i}=0$ for some $i\in \{2,\ldots, \ell\}$. The
induction will be complete if we show that  the averages
\eqref{eq:weighted} converge to $0$ in $L^2(\mu)$.

Let $\varepsilon>0$. By Proposition~\ref{P:ApprDual}  we can express
$f_1$ as  $f_1=f_{s}+f_{u}+f_{e}$, where  $f_s, f_u, f_e\in
L^\infty(\mu)$, $\nnorm{f_{u}}_{k_0,T_1}=0$,
$\norm{f_{e}}_{L^1(\mu)} \leq \varepsilon$, and $f_s=\sum_{i=1}^m
c_i f_{s,i}$, for some  $m\in \N$, $c_i\in \R$,   $f_{s,i}\in
L^\infty(\mu)$,
 and for  almost
every $x\in X$ the sequences $(f_{s,i} (T^nx))_{n\in\N}$ are    dual
sequences of level at most $k_0$.
 As we
explained before, when computing the limit in $L^1(\mu)$ of the
averages \eqref{eq:weighted}, the contribution of the term $f_{u}$
is
 negligible. Furthermore, by the induction hypothesis,
the same holds for the contribution of the term $f_{s,i}$, for
$i=1,\ldots,m$,  and as a consequence for the term $f_s$. It remains
to  handle  the  contribution of the term $f_{e}$. When $f_1$ is
replaced by $f_e$, the $L^1(\mu)$ norm of the averages
\eqref{eq:weighted} can be  bounded by
$$
\norm{\frac{1}{N}\sum_{n=1}^{N}T_1^{[a_1(n)]}|f_e|}_{L^1(\mu)}\leq
\frac{1}{N}\sum_{n=1}^{N}\norm{T_1^{[a_1(n)]}|f_e|}_{L^1(\mu)}=
\norm{f_e}_{L^1(\mu)}\leq \varepsilon.
$$
 Since  $\varepsilon$ was arbitrary, we deduce that
the averages~\eqref{eq:weighted} converge to $0$ in $L^1(\mu)$, and
as a consequence in $L^2(\mu)$ (since all functions $f_i$ are
bounded). This completes    the proof.
\end{proof}
We also record a variant of this result that will be used later.
\begin{proposition}\label{T:Rn+r'}
Let $(X,\X,\mu,T_1,\ldots,T_\ell)$ be a system and
$f_1,\ldots,f_\ell\in L^\infty(\mu)$ be functions.  Let $\H$ be a
Hardy field and $a_1,\ldots,a_\ell \in \G\cap \H$  be functions with
different growth rates and degree between $1$ and
 $d$.
Then there exists $k=k(d,\ell)$ such that the following holds: If
 $\nnorm{f_i}_{k,T_i}=0$, for some $i\in\{1,\ldots,\ell\}$, then
  \begin{equation}\label{E:product}
\lim_{R\to\infty}\limsup_{N\to\infty}\frac{1}{N}\sum_{n=1}^N\norm{
\frac{1}{R}\sum_{r=1}^R \prod_{i=1}^\ell T_i^{[a_i(Rn+r)]}f_i
}_{L^2(\mu)}=0.
  \end{equation}
\end{proposition}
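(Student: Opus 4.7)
The plan is to apply Cauchy--Schwarz to convert the averaged $L^2$-norms into averaged squared $L^2$-norms, expand the squared norm into correlation pairs, re-index to reduce to ordinary ergodic averages indexed by a shift $d$, and then use the seminorm machinery of the previous sections to kill the off-diagonal terms. Set $F(m):=\prod_{i=1}^\ell T_i^{[a_i(m)]}f_i$ and $B_R(n):=\frac{1}{R}\sum_{r=1}^R F(Rn+r)$. Cauchy--Schwarz gives
$$
\Bigl(\frac{1}{N}\sum_{n=1}^N \|B_R(n)\|_{L^2(\mu)}\Bigr)^{\!2}\leq \frac{1}{N}\sum_{n=1}^N \|B_R(n)\|_{L^2(\mu)}^{2},
$$
so it suffices to show the right-hand side vanishes in the iterated limit $\lim_R\limsup_N$. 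Expanding the squared norm and reorganizing the double sum over $(r,r')\in[1,R]^2$ via $d=r'-r$ and $m=Rn+r$ (absorbing boundary effects of size $O(R/N)$) gives
$$
\frac{1}{N}\sum_{n=1}^N \|B_R(n)\|_{L^2(\mu)}^{2}\;=\;\sum_{|d|<R}\frac{R-|d|}{R^2}\cdot\frac{1}{RN}\sum_{m=1}^{RN}\bigl\langle F(m), F(m+d)\bigr\rangle \;+\;o_N(1).
$$
The diagonal $d=0$ contributes at most $\frac{1}{R}\prod_i\|f_i\|_\infty^2$, which vanishes as $R\to\infty$, so it remains to show $C_d:=\lim_{M\to\infty}\frac{1}{M}\sum_{m=1}^M \langle F(m), F(m+d)\rangle = 0$ for each fixed $d\neq 0$ under the seminorm hypothesis.

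For each fixed $d\neq 0$, the number $C_d$ is the integral of the multiple ergodic average
$$
\frac{1}{M}\sum_{m=1}^M \prod_{i=1}^\ell \bigl(T_i^{[a_i(m)]}f_i\bigr)\bigl(T_i^{[a_i(m+d)]}\bar f_i\bigr).
$$
Although the paired iterates $[a_i(m)], [a_i(m+d)]$ share leading growth rate as functions of $m$, Lemma~\ref{L:properties} guarantees $a_i(m+d)-a_i(m)\sim d\cdot a_i(m)/m$, which has strictly lower degree while preserving different growth rates across $i$. A single application of van der Corput's inequality (Lemma~\ref{L:N-VDC}) to such a pair, modeled on the proof of Proposition~\ref{P:CharB}, reduces the matrix type of the associated nice family (in the sense of Section~\ref{S:Char}); after this step the PET-style inductive scheme of Propositions~\ref{prop:weighted} and \ref{pr:charcnilseq'}, combined with the dual decomposition of Proposition~\ref{P:ApprDual}, produces an integer $k=k(\deg(a_1),\ldots,\deg(a_\ell),\ell)$, independent of $d$, such that $\nnorm{f_i}_{k,T_i}=0$ forces $C_d=0$. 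Since each off-diagonal $C_d$ vanishes and the diagonal weight $1/R$ tends to zero, we obtain $\lim_R\sum_{|d|<R}\frac{R-|d|}{R^2}C_d=0$, completing the reduction.

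The main obstacle is the non-standard paired structure of the iterates in the average defining $C_d$: the family $\{(a_i(m),a_i(m+d))\}_{i}$ does not consist of functions with different growth rates, so Proposition~\ref{pr:charcnilseq'} does not apply verbatim. What must be verified is that the first van der Corput step legitimately converts each pair into its lower-degree difference while keeping the resulting family within the nice-family framework of Section~\ref{SS:Reducing}, and that the integer $k$ produced by the PET induction can be chosen independent of $d$ (this relies on the fact that the induction depends only on matrix types and degrees, both bounded uniformly in $d$). Once these technical points are settled, the rest of the argument is parallel to Section~\ref{S:Charlower} and yields the stated conclusion.
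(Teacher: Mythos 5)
Your global strategy matches the paper's: apply Cauchy--Schwarz to pass to squared norms, expand into correlation pairs, observe the diagonal contributes $O(1/R)$, kill the off-diagonal terms with the seminorm machinery, and let $R\to\infty$. However, there are two genuine gaps.

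First, the re-indexing identity
$$
\frac{1}{N}\sum_{n=1}^N \|B_R(n)\|_{L^2(\mu)}^{2}=\sum_{|d|<R}\frac{R-|d|}{R^2}\cdot\frac{1}{RN}\sum_{m=1}^{RN}\bigl\langle F(m), F(m+d)\bigr\rangle +o_N(1)
$$
is false as stated. Fixing $d\ge 1$, the pairs $(r,r')$ with $r'-r=d$ contribute the sum of $\langle F(m),F(m+d)\rangle$ over $m\in S_{N,d}:=\bigcup_{n}\{Rn+1,\ldots,Rn+R-d\}$, which is not the full interval $[1,RN]$ but a union of arithmetic progressions omitting the residues $R-d+1,\ldots,R$ in each block. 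Replacing the sum over $S_{N,d}$ by $\frac{R-d}{R}$ times the full Cesaro sum introduces a weighted average against the mean-zero $R$-periodic weight $\mathbf 1_{S_{N,d}}-\frac{R-d}{R}$, and this correction is $\Theta(1)$ as $N\to\infty$ for fixed $R$ and $d$ (take $F$ roughly $R$-periodic to see the failure); it is not $o_N(1)$. Likewise, the resulting conclusion $C_d=0$, which concerns averages over all of $\N$, does not by itself control the arithmetic-progression averages that actually appear. The paper sidesteps all of this by never re-indexing: it keeps the double sum over $(r_1,r_2)$ and shows directly that for each fixed $r_1\ne r_2$ the averages $\frac{1}{N}\sum_{n}\prod_i T_i^{[a_i(Rn+r_1)]}f_i\cdot T_i^{[a_i(Rn+r_2)]}\bar f_i$ converge to $0$ in $L^2(\mu)$.

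Second, the "first van der Corput step" you propose is both unnecessary and not obviously legitimate. Your diagnosis is right that the off-diagonal average has the same transformation appearing with two same-growth iterates, so Proposition~\ref{pr:charcnilseq'} (which assumes different growth rates) does not apply verbatim. But the remedy is not an ad hoc vdC step: the relevant family of $\ell$-tuples --- with entries $a_i(Rt+r_1)$ and $a_i(Rt+r_2)$ placed in the $i$-th slot and zeros elsewhere --- is already a nice family in the sense of Section~\ref{S:Char} (the conditions follow from Lemma~\ref{L:properties}, since $a_i(Rt+r_1)-a_i(Rt+r_2)\sim a_i(t)/t\succ 1$ because $\deg a_i\ge 1$ and $a_i\in\G$). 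The paper applies Proposition~\ref{prop:weighted} directly to this nice family to get the estimate for the fastest-growing function, then adapts the inductive argument of Proposition~\ref{pr:charcnilseq} (dual decomposition plus induction on the number of transformations) to obtain a single $k=k(d,\ell)$ good for all $f_i$. You should replace the hand-waved van der Corput step with that direct invocation of Proposition~\ref{prop:weighted}, and you should verify the niceness conditions explicitly rather than deferring them.
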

\begin{proof}
Suppose that  $a_1$  is the fastest growing function and all
functions are bounded by $1$.  Notice  that for every $R\in \N$ we
have
 $$ \frac{1}{N}\sum_{n=1}^N\norm{ \frac{1}{R}\sum_{r=1}^R
\prod_{i=1}^\ell T_i^{[a_i(Rn+r)]}f_i}^2_{L^2(\mu)} =
\frac{1}{R^2}\sum_{1\leq r_1,r_2\leq R }  \frac{1}{N}\sum_{n=1}^N
\int \prod_{i=1}^\ell T_i^{[a_i(Rn+r_1)]}f_i \cdot
T_i^{[a_i(Rn+r_2)]}\bar{f}_i \ d\mu.
$$
For $r_1\neq r_2$,  using  Proposition~\ref{prop:weighted} (the
corresponding family of $\ell$-tuples is nice) we get that there
exists $k_0=k_0(d,\ell)$ such that if $\nnorm{f_1}_{k_0,T_1}=0$,
then   the averages
\begin{equation}\label{E:rn+r_1}
\frac{1}{N}\sum_{n=1}^N  \prod_{i=1}^\ell T_i^{[a_i(Rn+r_1)]}f_i
\cdot T_i^{[a_i(Rn+r_2)]}\bar{f}_i
\end{equation}
converge to $0$ in $L^2(\mu)$. It is then
 straightforward to adapt the proof of
Proposition~\ref{pr:charcnilseq} in order to get that there exists
$k=k(d,\ell)$ such that
  for $r_1\neq r_2$ , if $\nnorm{f_i}_{k,T_i}=0$, then  the
averages \eqref{E:rn+r_1} converge to $0$ in $L^2(\mu)$. We deduce
that for every $R\in \N$ we have
$$
\limsup_{N\to\infty}\frac{1}{N}\sum_{n=1}^N\norm{\frac{1}{R}\sum_{r=1}^R
\prod_{i=1}^\ell T_i^{[a_i(Rn+r)]}f_i}^2_{L^2(\mu)}\leq 1/R.
    $$
    Taking $R\to \infty$
    we deduce that \eqref{E:product} holds and completes the proof.
\end{proof}

\subsection{Equidistribution on nilmanifolds}
\begin{proposition}[\cite{Fr09}]\label{P:NilEqui}
 Let $\H$ be a Hardy field  and  $a_1,\ldots,a_\ell\in \G\cap \H$
 be functions with
different growth rates and positive degree. For $i=1,\ldots, \ell$,
let $X_i\=G_i/\Gamma_i$ be nilmanifolds, $b_i \in G_i$, and $x_i \in
X_i$. Then  the sequence
$$
(b_1^{[a_1(n)]}x_1,\ldots, b_\ell^{[a_\ell(n)]}x_\ell)
$$
is equidistributed on the nilmanifold    $\prod_{i=1}^\ell
\overline{\{b_i^nx_i\colon n\in\N\}}$.
\end{proposition}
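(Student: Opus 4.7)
The plan is to reduce this product equidistribution to a joint equidistribution result for integer parts of Hardy field sequences, which is proved in~\cite{Fr09}.

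First, I would form the product $X = \prod_{i=1}^\ell X_i = G/\gG$, where $G = \prod G_i$ and $\gG = \prod \gG_i$; the sequence $\bigl(b_1^{[a_1(n)]}x_1, \ldots, b_\ell^{[a_\ell(n)]}x_\ell\bigr)$ takes values in $X$. By a classical result of Leon Green and Parry, each orbit closure $Y_i \= \overline{\{b_i^n x_i : n\in\N\}}$ is itself a sub-nilmanifold of $X_i$ on which $b_i$ acts minimally and uniquely ergodically. So, after passing to connected components and replacing each $X_i$ with $Y_i$, it suffices to prove that the sequence is equidistributed on the product $\prod_{i=1}^\ell Y_i$.

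Next, I would invoke a Leibman-type equidistribution criterion on the (connected) nilmanifold $\prod Y_i$: the sequence is equidistributed if and only if for every non-trivial horizontal character $\eta \colon G \to \T$ (continuous group homomorphism vanishing on $\gG$), the scalar sequence
\[
\eta\bigl(b_1^{[a_1(n)]}, \ldots, b_\ell^{[a_\ell(n)]}\bigr) \equiv \sum_{i=1}^\ell \alpha_i\, [a_i(n)] \pmod 1
\]
is equidistributed modulo~$1$, where $\alpha_i \= \eta_i(b_i)$ and $\eta_i$ denotes the restriction of $\eta$ to $G_i$. The minimality of each action on $Y_i$ ensures that if $\eta$ is non-trivial, then at least one $\alpha_i$ is irrational.

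The crucial step is then to prove that whenever $a_1, \ldots, a_\ell \in \G \cap \H$ have different growth rates and at least one $\alpha_i$ is irrational, the sequence $\bigl(\sum_i \alpha_i [a_i(n)]\bigr)$ is equidistributed modulo~$1$. By the Weyl criterion this amounts to bounding the exponential sums $\frac{1}{N}\sum_{n=1}^N e^{2\pi i k \sum_i \alpha_i [a_i(n)]}$ for every non-zero integer $k$. Since the $a_i$ have different growth rates and belong to $\G$, every non-trivial real-linear combination $k\sum_i \alpha_i a_i(t)$ is dominated by its fastest-growing term and stays at polynomial distance from every $ct+d$; hence, combined with Boshernitzan's equidistribution theorem~\cite{Bos94} and a van der Corput differencing argument, one obtains the needed cancellation. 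The main obstacle is the passage from $a_i(n)$ to $[a_i(n)]$ inside the exponential, which requires careful bookkeeping of the fractional parts $\{a_i(n)\}$ via smooth-truncation in Weyl's criterion. This bookkeeping is the technical heart of~\cite{Fr09}, whose conclusion we simply quote.
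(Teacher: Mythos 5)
The statement is quoted in the paper without proof, attributed to \cite{Fr09}, so the relevant comparison is between your sketch and the argument in that reference. Your reduction to horizontal characters is the weak point: the equivalence you invoke (equidistribution of a sequence on a connected nilmanifold is equivalent to equidistribution of its projections under all nontrivial horizontal characters) is Leibman's criterion, and it is a theorem specifically about \emph{polynomial} sequences $g(n)\Gamma$. A sequence of the form $n\mapsto b^{[a(n)]}\Gamma$ with $a$ a non-polynomial Hardy field function is not a polynomial sequence, and for such sequences equidistribution of the abelianised projection does \emph{not} imply equidistribution on the nilmanifold. Concretely, on a 2-step nilmanifold the vertical coordinate of $b^{m}$ grows like $m^{2}\gamma$; with $m=[a(n)]$ this contributes a term $\sim [a(n)]^{2}\gamma$ which no horizontal character detects, so your criterion silently ignores precisely the part of the problem where the nilpotent structure enters. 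Nothing in your sketch explains why those higher-step contributions behave well.

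Once the criterion is removed, what remains is not a proof. The passage you label as the "technical heart of \cite{Fr09}" — keeping track of fractional parts in scalar exponential sums — is actually the easy (abelian) part of the story and essentially already covered by Boshernitzan's work. The genuinely hard part of \cite{Fr09}, and the reason it exists as a separate paper, is the nilmanifold case: one reduces to $G$ connected, simply connected and $b$ ergodic, interprets $b^{t}$ for real $t$ via $\exp$, writes $b^{[a(n)]}=b^{a(n)}b^{-\{a(n)\}}$, Taylor-approximates $a$ by polynomials on suitable intervals, and then invokes the \emph{quantitative} equidistribution theory of Green and Tao \cite{GT09c} (smoothness norms of horizontal characters, factorisation into rational times smooth times equidistributed) to control how far these local polynomial approximants are from equidistributing. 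None of this is captured by the Weyl-criterion computation you describe. So your outline, as written, is missing the key idea and would not close into a proof; if you want to cite \cite{Fr09} as a black box (which the paper itself does), you should quote the nilmanifold equidistribution theorem directly rather than a scalar statement you do not actually reduce to.
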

For future use we record an identity that follows from the previous
result: For all functions $F_i\in C(X_i)$ we have
\begin{equation}\label{poi}
\lim_{N\to\infty} \frac{1}{N}\sum_{n=1}^N \prod_{i=1}^\ell
F_i(b_i^{[a_i(n)]}x_i)=\prod_{i=1}^\ell \lim_{N\to\infty}
\frac{1}{N}\sum_{n=1}^N
 F_i(b_i^{n}x_i).
\end{equation}
We are also going to use another identity. Its proof is essentially
contained in \cite{Fr09}.
\begin{proposition}\label{P:Rn+r}
 Let $\H$ be a Hardy field and   $a_1,\ldots,a_\ell\in \G\cap \H$   be functions with different growth rates
and positive degree. For $i=1,\ldots, \ell$, let $X_i\=G_i/\Gamma_i$
be nilmanifolds, $b_i\in G_i$, $x_i\in X_i$, and $F\in C(X)$, where
$X=X_1\times\cdots\times X_\ell$. Then \begin{equation}
\label{E:pwe}
\lim_{R\to\infty}\limsup_{N\to\infty}\frac{1}{N}\sum_{n=1}^N \Big|
\frac{1}{R}\sum_{r=1}^R
F(b_1^{[a_1(Rn+r)]}x_1,\ldots,b_\ell^{[a_\ell(Rn+r)]}x_\ell)-\int F
\ dm_{\tilde{X}}\Big|=0
\end{equation}
where   $\tilde{X}=\prod_{i=1}^\ell\overline{\{b_i^nx_i\colon
n\in\N\}}$.
\end{proposition}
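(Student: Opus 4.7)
Set
\[
g_R(n) \= \frac{1}{R}\sum_{r=1}^R F\bigl(b_1^{[a_1(Rn+r)]}x_1,\ldots,b_\ell^{[a_\ell(Rn+r)]}x_\ell\bigr), \qquad I\=\int F\,dm_{\tilde X}.
\]
By the Cauchy--Schwarz inequality, $\bigl(\tfrac{1}{N}\sum_{n=1}^N|g_R(n)-I|\bigr)^{2}\le \tfrac{1}{N}\sum_{n=1}^N|g_R(n)-I|^2$, so the plan is to show that the right-hand side tends to $0$ as $N\to\infty$ and then $R\to\infty$.

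Expanding the square, the cross term $-2\Re\bigl(\bar I\cdot \tfrac{1}{N}\sum_n g_R(n)\bigr)$ converges to $-2|I|^2$: indeed
\[
\frac{1}{N}\sum_{n=1}^N g_R(n)=\frac{1}{NR}\sum_{m=1}^{NR}F\bigl(\ldots b_i^{[a_i(m)]}x_i\ldots\bigr)+O(1/N)\longrightarrow I
\]
by Proposition~\ref{P:NilEqui} applied to the unweighted sequence. So everything reduces to showing
\[
\frac{1}{N}\sum_{n=1}^N|g_R(n)|^2=\frac{1}{R^2}\sum_{r_1,r_2=1}^R \frac{1}{N}\sum_{n=1}^N F\bigl(\ldots b_i^{[a_i(Rn+r_1)]}x_i\ldots\bigr)\,\overline{F}\bigl(\ldots b_i^{[a_i(Rn+r_2)]}x_i\ldots\bigr)
\]
tends to $|I|^2$. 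The diagonal $r_1=r_2$ contributes at most $\|F\|_\infty^2/R$, which is negligible as $R\to\infty$, so the whole argument rests on the off-diagonal terms.

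For fixed $r_1\neq r_2$ the goal is to show that the joint sequence
\[
\bigl(b_1^{[a_1(Rn+r_1)]}x_1,\ldots,b_\ell^{[a_\ell(Rn+r_1)]}x_\ell,\; b_1^{[a_1(Rn+r_2)]}x_1,\ldots,b_\ell^{[a_\ell(Rn+r_2)]}x_\ell\bigr)_{n\in\N}
\]
is equidistributed on $\tilde X\times\tilde X$, so that its $N$-average against $F\otimes\overline{F}$ equals $I\cdot\bar I=|I|^2$. Iterates belonging to distinct indices $i$ retain their different growth rates after the affine change $t\mapsto Rt+r_j$, while for each fixed $i$ the mean value theorem combined with Lemma~\ref{L:properties}(ii) gives
\[
a_i(Rt+r_1)-a_i(Rt+r_2)\sim (r_1-r_2)R\,a_i'(Rt)\sim (r_1-r_2)\,a_i(Rt)/t,
\]
which diverges to $\pm\infty$ since $\deg(a_i)\ge 1$. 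This ``effective independence'' is the input needed to run the machinery of \cite{Fr09} on the product nilmanifold $\prod_{i=1}^\ell (X_i\times X_i)$ driven by the elements $(b_i,b_i)$.

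The main obstacle is precisely this joint equidistribution step: one must verify that the quantitative equidistribution results of \cite{GT09c} extend to the $2\ell$ Hardy-field sequences $(a_i(Rn+r_j))$, $i=1,\ldots,\ell$, $j=1,2$, sampled along an arithmetic progression of step $R$. Granting this (it is what the excerpt refers to when it says the proof is ``essentially contained in \cite{Fr09}''), the off-diagonal limit equals $|I|^2$ for every $r_1\neq r_2$, and combining the diagonal contribution $O(1/R)$, the off-diagonal contribution $(1-1/R)|I|^2$, and the vanishing cross term yields $\limsup_N\tfrac{1}{N}\sum_n|g_R(n)-I|^2\to 0$ as $R\to\infty$. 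Cauchy--Schwarz then completes the proof.
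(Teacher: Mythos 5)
Your high-level plan is sensible and may well mirror the internal structure of the argument in \cite{Fr09}, but it is a different route from the paper's proof. The paper does not carry out an $L^2$ expansion; it applies a reduction argument (Section 5.2 of \cite{Fr09}) to pass to connected, simply connected nilmanifolds with ergodic translations, and then directly cites the fact that the resulting double-limit statement was already verified in the course of proving Proposition 5.3 of \cite{Fr09}.

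The genuine gap in your proposal is the off-diagonal equidistribution step. For each fixed $R$ and $r_1\ne r_2$, you assert that the sequence $\bigl(b_1^{[a_1(Rn+r_1)]}x_1,\ldots,b_\ell^{[a_\ell(Rn+r_1)]}x_\ell,\,b_1^{[a_1(Rn+r_2)]}x_1,\ldots,b_\ell^{[a_\ell(Rn+r_2)]}x_\ell\bigr)_{n\in\N}$ equidistributes on $\tilde X\times\tilde X$, so that the $N$-average equals $|I|^2$. This is not a consequence of Proposition~\ref{P:NilEqui}: that proposition requires the Hardy field functions involved to have pairwise different growth rates, whereas $a_i(Rt+r_1)$ and $a_i(Rt+r_2)$ (same index $i$) have the \emph{same} growth rate. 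The divergence $a_i(Rt+r_1)-a_i(Rt+r_2)\sim(r_1-r_2)\,a_i(Rt)/t\to\infty$ that you observe is necessary but far from sufficient on a general nilmanifold: one has to verify the Weyl criterion against all nilcharacters of $\tilde X\times\tilde X$, and the integer-part brackets introduce further dependencies through the fractional parts $\{a_i(Rn+r_j)\}$, which cannot simply be discarded. Without a full equidistribution argument you also cannot even be sure the off-diagonal limit is $|I|^2$ rather than an integral over a proper subnilmanifold of $\tilde X\times\tilde X$. Establishing the joint equidistribution for sequences of coinciding growth rates is precisely the technical content that \cite{Fr09} and \cite{GT09c} supply; ``granting'' it leaves your argument with the same missing kernel as the citation you are trying to avoid, so the proposal amounts to a plausible proof outline rather than a proof.
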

\begin{proof}[Sketch of Proof]
Using a straightforward modification of the reduction argument  of
Section 5.2 in \cite{Fr09}, we can reduce matters to proving the
following statement:  ``For $i=1,\ldots, \ell$, let
$X_i=G_i/\Gamma_i$ be nilmanifolds, with $G_i$ connected and simply
connected, $x_i\in X_i$,  $b_i\in G_i$ act ergodically on $X_i$
(meaning the sequence $(b_i^nx_i)$ is equidistributed in $X_i$ for
every $x_i\in X_i$), and $F\in C(X)$, where $X=X_1\times\cdots\times
X_\ell$. Then \eqref{E:pwe} holds with $X$ in place of
$\tilde{X}$.''

This was verified while proving Proposition~5.3  in \cite{Fr09},
completing the proof.
\end{proof}
\subsection{Proof of Theorem~\ref{T:MainConv} in the positive degree case}

\begin{proposition}\label{TT:Rn+r}
Theorem~\ref{T:MainConv} holds when  all functions
$a_1,\ldots,a_\ell$ have positive degree.
\end{proposition}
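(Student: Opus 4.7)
Since all $a_i$ have positive degree, Proposition~\ref{pr:charcnilseq'} applies with $d = \max_i \deg(a_i) \ge 1$. My plan is to combine this seminorm estimate with the strong decomposition Theorem~\ref{P:ApprNil} to replace each $f_i$ by one whose $T_i$-orbit is essentially a nilsequence $\mu$-a.e., and then apply the equidistribution identity \eqref{poi} of Proposition~\ref{P:NilEqui} pointwise. Let $k = k(d,\ell)$ be provided by Proposition~\ref{pr:charcnilseq'}. Fix $\varepsilon > 0$ and, for each $i \in \{1,\ldots,\ell\}$, invoke Theorem~\ref{P:ApprNil} applied to $f_i$ and $T_i$ with parameter $k-1$ to obtain a decomposition $f_i = f_{i,s} + f_{i,u} + f_{i,e}$ with $\nnorm{f_{i,u}}_{k,T_i} = 0$, $\norm{f_{i,e}}_{L^2(\mu)} \leq \varepsilon$, each summand bounded by $2\norm{f_i}_{L^\infty(\mu)}$, and $(f_{i,s}(T_i^n x))_{n \in \N}$ a nilsequence for $\mu$-a.e.\ $x$.

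First I would telescope, replacing the $f_i$ by $f_{i,s}$ one factor at a time. Whenever an $f_{i,u}$ appears in a factor, Proposition~\ref{pr:charcnilseq'} forces the corresponding $L^2$-contribution to the average to vanish as $N \to \infty$; whenever an $f_{i,e}$ appears, the Cauchy--Schwarz inequality combined with the $T_j$-invariance of $\mu$ gives a bound of $\varepsilon$ times a product of $L^\infty$-norms of the remaining $f_j$. Thus, modulo an $L^2$-error of size $O(\varepsilon)$, the averages in \eqref{E:ProductForm} agree with
\begin{equation*}
\frac{1}{N}\sum_{n=1}^N T_1^{[a_1(n)]} f_{1,s} \cdots T_\ell^{[a_\ell(n)]} f_{\ell,s}.
\end{equation*}

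For $\mu$-a.e.\ $x$, each sequence $(f_{i,s}(T_i^n x))_{n \in \N}$ is, up to a uniform $\varepsilon$-approximation, a basic nilsequence $F_i(b_i^n x_i)$ on a nilmanifold $X_i = G_i/\Gamma_i$ whose data depend on $x$. Applying identity \eqref{poi} pointwise then yields
\begin{equation*}
\lim_{N \to \infty} \frac{1}{N}\sum_{n=1}^N \prod_{i=1}^\ell f_{i,s}(T_i^{[a_i(n)]} x) = \prod_{i=1}^\ell \lim_{N \to \infty} \frac{1}{N}\sum_{n=1}^N f_{i,s}(T_i^n x) = \prod_{i=1}^\ell \tilde{f}_{i,s}(x),
\end{equation*}
the last equality being the ergodic theorem applied to each $T_i$. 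Dominated convergence upgrades this to $L^2(\mu)$-convergence. Since $\nnorm{f_{i,u}}_{1,T_i} \leq \nnorm{f_{i,u}}_{k,T_i} = 0$, we have $\tilde{f}_{i,u} = 0$ and therefore $\tilde{f}_i = \tilde{f}_{i,s} + \tilde{f}_{i,e}$ with $\norm{\tilde{f}_{i,e}}_{L^2(\mu)} \leq \varepsilon$, placing $\prod_i \tilde{f}_{i,s}$ within $O(\varepsilon)$ of $\prod_i \tilde{f}_i$ in $L^2(\mu)$. Combining these three bounds gives $\limsup_{N \to \infty} \norm{\text{averages} - \prod_{i=1}^\ell \tilde{f}_i}_{L^2(\mu)} = O(\varepsilon)$, and letting $\varepsilon \to 0$ completes the proof.

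The main obstacle is making the pointwise use of \eqref{poi} rigorous: the nilsystem data $(G_i, \Gamma_i, b_i, x_i, F_i)$ attached to $f_{i,s}$ vary with $x$, so measurability considerations are required to apply \eqref{poi} off a null set and then invoke dominated convergence. The passage from basic nilsequences (where \eqref{poi} is directly stated) to the uniform-limit nilsequences produced by Theorem~\ref{P:ApprNil} requires an additional standard $\varepsilon$-approximation, absorbed into the error budget above.
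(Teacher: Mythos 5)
Your proposal is correct and follows essentially the same route as the paper: remove the uniform components $f_{i,u}$ via the seminorm estimate of Proposition~\ref{pr:charcnilseq'}, absorb the errors $f_{i,e}$ via an $L^2$ (the paper uses $L^1$) bound, and evaluate the remaining product of nilsequences with the equidistribution identity \eqref{poi}. Your remarks on measurability of the pointwise nilsystem data and on upgrading from basic nilsequences to uniform-limit nilsequences identify real technicalities that the paper glosses over, but they are standard and do not change the argument.
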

\begin{proof}
We want to show that for every   system
$(X,\mathcal{B},\mu,T_1,\ldots, T_\ell)$
 and
   functions $f_1,\dots,f_\ell\in L^\infty(\mu)$, we have
  \begin{equation}\label{E:product'}
\lim_{N\to\infty}\frac{1}{N} \sum_{n=1}^N \prod_{i=1}^\ell
T_i^{[a_i(n)]}f_i=\prod_{i=1}^\ell  \tilde{f}_i
  \end{equation}
where   converge is taken in $L^2(\mu)$ and
$\tilde{f}_i\=\E(f_i|\cI_{T_i})$.
  By
Proposition~\ref{pr:charcnilseq}  there exists $k$ such that, if
$\nnorm{f_i}_{k,T_i}=0$ for some $i\in\{1,\ldots,\ell\}$, then the
limit of the averages in \eqref{E:product'} is $0$ where convergence
takes place in $L^2(\mu)$ (and hence in $L^1(\mu)$ as well).

Let $\varepsilon>0$. By Proposition~\ref{P:ApprDual}, for
$i=1,\ldots, \ell$ we can write $f_i=f_{i,s}+f_{i,u}+f_{i,e}$, where
$f_{i,s}, f_{i,u}, f_{i,e}\in L^\infty(\mu)$,
$\nnorm{f_{i,u}}_{k,T_i}=0$, $\norm{f_{i,e}}_{L^1(\mu)} \leq
\varepsilon$,   and $f_{i,s}\in L^\infty(\mu)$ are such that  for
almost every $x\in X$ the sequence $(f_{i,s} (T_i^nx))$ is a
$k$-step nilsequence, say $(\cN_{i,x}(n))$.
 As we
explained before, when computing the limit in $L^1(\mu)$  of the
averages  in \eqref{E:product'}, the contribution of the terms
$f_{i,u}$ is
 negligible.
Furthermore, the same holds for the  contribution of the terms
$f_{i,e}$. This follows  since for every $N\in\N$ the $L^1(\mu)$
norm of the averages in \eqref{E:product'} is bounded by a constant
multiple of
$$
\min_{i=1,\ldots,\ell}
\frac{1}{N}\sum_{n=1}^{N}\norm{T_i^{[a_i(n)]}|f_i|}_{L^1(\mu)}=
\min_{i=1,\ldots, \ell}\norm{f_i}_{L^1(\mu)}.
$$
Therefore, it remains to examine the contribution of the terms
$f_{i,s}$. In this case, the average in \eqref{E:product'} takes the
form
$$
\frac{1}{N}\sum_{n=1}^N \prod_{i=1}^\ell \cN_{i,x}([a_i(n)]).
$$
Using  identity \eqref{poi} 
we get that the limit of this average
is
$$
\prod_{i=1}^\ell \lim_{N\to\infty}\frac{1}{N}\sum_{n=1}^N \
\cN_{i,x}(n )
$$
which in turn is equal to
$$
\prod_{i=1}^\ell  \lim_{N\to\infty}\frac{1}{N}\sum_{n=1}^N \
f_{i,s}(T_i^nx).
$$
For reasons explained before this is equal, up to a constant
multiple of $\varepsilon$, to
$$
\prod_{i=1}^\ell \lim_{N\to\infty}\frac{1}{N}\sum_{n=1}^N \
f_i(T_i^nx) =\prod_{i=1}^\ell  \tilde{f}_i.
$$
Letting $\varepsilon\to 0$ completes the proof.
\end{proof}
The  proof of the next result is completely analogous to the proof
of Proposition~\ref{TT:Rn+r}, one uses Proposition~\ref{T:Rn+r'} in
place of Proposition~\ref{pr:charcnilseq} and
Proposition~\ref{P:Rn+r} in place of Proposition~\ref{P:NilEqui}
\begin{proposition}\label{TT:Rn+r'}
Let $(X,\X,\mu,T_1,\ldots,T_\ell)$ be a system and
$f_1,\ldots,f_\ell\in L^\infty(\mu)$ be functions.  Let $\H$ be a
Hardy field and $a_1,\ldots,a_\ell \in\G\cap \H$ be functions with
different growth rates and positive degree.
Then
$$
\lim_{R\to\infty}\limsup_{N\to\infty}\frac{1}{N}\sum_{n=1}^N \norm{
\frac{1}{R}\sum_{r=1}^R \prod_{i=1}^\ell  T_i^{[a_i(Rn+r)]}f_i -
\prod_{i=1}^\ell \tilde{f}_i}_{L^2(\mu)}=0
$$
where $\tilde{f}_i=\E(f_i|\cI_{T_i})$.
\end{proposition}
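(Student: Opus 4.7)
The plan is to follow the same three-step pattern used in the proof of Proposition~\ref{TT:Rn+r}, substituting at each stage the result that handles the ``double-averaged'' expression $\frac{1}{R}\sum_{r=1}^R (\cdot)$ over the inner variable $r$. First, I would appeal to Proposition~\ref{T:Rn+r'} to obtain an integer $k=k(d,\ell)$ with the property that whenever $\nnorm{f_i}_{k,T_i}=0$ for some $i$, the double limit
\[
\lim_{R\to\infty}\limsup_{N\to\infty}\frac{1}{N}\sum_{n=1}^N \Big\|\frac{1}{R}\sum_{r=1}^R \prod_{j=1}^\ell T_j^{[a_j(Rn+r)]}f_j \Big\|_{L^2(\mu)}
\]
vanishes. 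This plays the role that Proposition~\ref{pr:charcnilseq} plays in the proof of Proposition~\ref{TT:Rn+r}.

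Second, I would apply the strong decomposition result Theorem~\ref{P:ApprNil} to each $f_i$ at level $k$, writing $f_i=f_{i,s}+f_{i,u}+f_{i,e}$ with $\nnorm{f_{i,u}}_{k+1,T_i}=0$, $\|f_{i,e}\|_{L^2(\mu)}\leq\varepsilon$, and with $(f_{i,s}(T_i^nx))_{n\in\N}$ a $k$-step nilsequence for almost every $x$. Expanding the product $\prod_{j=1}^\ell f_j$ into $3^\ell$ terms, by the first step every term containing at least one $f_{j,u}$ contributes $0$ to the double limit. Each term containing at least one $f_{j,e}$ is controlled by bounding the $L^2(\mu)$ norm of the inner average by $\frac{1}{R}\sum_{r=1}^R \|T_i^{[a_i(Rn+r)]} f_{i,e}\|_{L^\infty}\cdot\prod_{j\ne i}\|f_j\|_\infty$, and ultimately by a constant multiple of $\varepsilon$; letting $\varepsilon\to 0$ at the end kills these.

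Third, on the purely structured term $\prod_{j=1}^\ell f_{j,s}$, for almost every $x\in X$ the inner average becomes
\[
\frac{1}{R}\sum_{r=1}^R \prod_{j=1}^\ell \cN_{j,x}([a_j(Rn+r)]),
\]
where each $\cN_{j,x}$ is a $k$-step nilsequence arising from a nilmanifold $X_j=G_j/\Gamma_j$, an element $b_j\in G_j$, a continuous function $F_j$, and a point $x_j\in X_j$. Applying Proposition~\ref{P:Rn+r} to the product system $X_1\times\cdots\times X_\ell$ gives that the double limit of this expression equals $\int F\,dm_{\tilde X}$, which by the classical equidistribution on each individual orbit closure factors as $\prod_{j=1}^\ell \lim_{N\to\infty}\frac{1}{N}\sum_{n=1}^N f_{j,s}(T_j^nx)$. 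Undoing the decomposition (up to $O(\varepsilon)$) and invoking the mean ergodic theorem, this equals $\prod_{j=1}^\ell \tilde f_j(x)$, giving the desired limit formula.

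The only point requiring real care is the passage from the nilsequence output of Theorem~\ref{P:ApprNil} to the hypotheses of Proposition~\ref{P:Rn+r}: one must note that a $k$-step nilsequence $\cN_{j,x}(n)=F_j(b_j^n x_j)$ has the right form on a set of full $\mu$-measure so that Fubini yields the $L^2$ (not just pointwise) statement after integrating in $x$; this is exactly the way it is used in the proof of Proposition~\ref{TT:Rn+r} and requires no new idea. Everything else is a direct transcription of that earlier argument with the single- and double-averaged versions of the key seminorm and equidistribution inputs interchanged.
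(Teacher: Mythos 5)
Your proposal matches the paper's own proof, which is stated only as ``completely analogous to the proof of Proposition~\ref{TT:Rn+r}, using Proposition~\ref{T:Rn+r'} in place of Proposition~\ref{pr:charcnilseq} and Proposition~\ref{P:Rn+r} in place of Proposition~\ref{P:NilEqui}''; your write-up is a correct and faithful expansion of exactly that substitution, including the observation that the Haar measure on $\tilde X$ is a product so that $\int F\,dm_{\tilde X}$ factors. One small slip: when controlling the error term $f_{i,e}$ you write $\|T_i^{[a_i(Rn+r)]}f_{i,e}\|_{L^\infty}$, which is a constant (norm-preserving) rather than $O(\varepsilon)$; you should bound the inner $L^2(\mu)$ average by $\frac1R\sum_r\|T_i^{[a_i(Rn+r)]}f_{i,e}\|_{L^2(\mu)}=\|f_{i,e}\|_{L^2(\mu)}\le\varepsilon$ (times the $L^\infty$ bounds of the remaining factors), exactly as the paper does with the $L^1$ norm in Proposition~\ref{TT:Rn+r}.
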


\subsection{Proof of Theorem~\ref{T:MainConv} in the general case}\label{SS:proof}

\begin{proof}[Proof of Main Theorem in the general case]
Without loss of generality we can assume that $a_\ell\prec
a_{\ell-1}\prec\cdots\prec a_1$. If all   functions
$a_1,\ldots,a_\ell$ have degree $0$, then the result follows from
Theorem~2.7 in \cite{Fr11}. If all   functions $a_1,\ldots,a_\ell$
have positive degree, then  the result was proved in the previous
subsection. Hence, we can assume that  there exists $m\in
\{1,\ldots, \ell-1\}$ such that $\deg(a_i)=0$ for $i=m+1,\ldots,
\ell$ and $\deg(a_i)\geq 1$ for $i=1,\ldots, m$.

It suffices to show that   if $\tilde{f}_i=0$ for some $i\in
\{1,\ldots,m\}$, then
\begin{equation}\label{EE:main}
\limsup_{N\to\infty} \norm{\frac{1}{N}\sum_{n=1}^N \prod_{i=1}^\ell
 T_i^{[a_i(n)]}f_i}_{L^2(\mu)} =0
\end{equation}
where the convergence  takes place in $L^2(\mu)$.   For every
$R\in\N$ the limit in \eqref{EE:main} is equal to
\begin{equation}\label{EE:main1}
  \limsup_{N\to\infty}
 \norm{\frac{1}{N}\sum_{n=1}^N
 \frac{1}{R}\sum_{r=1}^R
 \prod_{i=1}^\ell  T_i^{[a_i(nR+r)]}f_i}_{L^2(\mu)} .
\end{equation}
Since  the functions $a_{m+1},\ldots, a_\ell\in \H$ have  degree
$0$, it is easy to see  the following (one uses that their
derivative converges to $0$ and the mean value theorem):  for every
$R\in \N$, for a set of $n\in \N$ of density $1$, we have
$[a_i(nR+r)]=[a_i(nR)]$ for $r=1,\ldots,R$ and $i=m+1,\ldots,\ell$.
We deduce that  the limit in \eqref{EE:main1} is equal to
$$
\limsup_{N\to\infty} \norm{ \frac{1}{N}\sum_{n=1}^N \Big(
\prod_{i=m+1}^\ell
 T_i^{[a_i(nR)]}f_i \cdot
\frac{1}{R}\sum_{r=1}^R \prod_{i=1}^m T_i^{[a_i(nR+r)]}f_i
\Big)}_{L^2(\mu)}.
$$
This  is bounded by a constant times
$$
\limsup_{N\to\infty} \frac{1}{N}\sum_{n=1}^N \norm{
 \frac{1}{R}\sum_{r=1}^R
\prod_{i=1}^m  T_i^{[a_i(nR+r)]}f_i }_{L^2(\mu)}.
$$
 Using Proposition~\ref{TT:Rn+r'}
we see that the limit of this expression as $R\to\infty$ is equal to
$0$. This completes the proof.
\end{proof}

\end{document}